\newtheorem{theorem}{Theorem}[section]
\newtheorem{lemma}[theorem]{Lemma}
\newtheorem{definition}[theorem]{Definition}
\newtheorem{proposition}[theorem]{Proposition}
\newtheorem{example}[theorem]{Example}
\newtheorem{corollary}[theorem]{Corollary}
\newtheorem{remark}[theorem]{Remark}
\def\<{\langle}
\def\>{\rangle}
\date{}
\begin{document}
\renewcommand{\baselinestretch}{1.2}
\renewcommand{\arraystretch}{1.0}
\title{\bf $Q$-graded Hopf quasigroups}
 \date{}
\author {{\bf Guodong Shi \quad  Shuanhong Wang\footnote {Corresponding author:  shuanhwang@seu.edu.cn}}\\
{\small School of Mathematics, Southeast University}\\
{\small Nanjing, Jiangsu 210096, P. R. of China}}
 \maketitle
\begin{center}
\begin{minipage}{12.cm}

\noindent{\bf Abstract.} Firstly, we introduce a class of new algebraic systems which generalize Hopf quasigroups and Hopf $\pi-$algebras called $Q$-graded Hopf quasigroups, and research some properties of them. Secondly, we define the representations of $Q$-graded Hopf quasigroups, i.e $Q$-graded Hopf quasimodules, research the construction method and fundamental theorem of them. Thirdly, we research the smash products of $Q$-graded Hopf quasigroups.
\\

\noindent{\bf Keywords:} $Q$-graded Hopf quasigroups; $Q$-graded Hopf quasimodules; Smash products.
\\

 \noindent{\bf  Mathematics Subject Classification:} 16W30.
 \end{minipage}
 \end{center}
 \normalsize\vskip1cm

\section*{Introduction}
Hopf quasigroups\cite{KM1}were introduced by Klim and Majid, there are generalization of Hopf algebras\cite{Sw} that are not required to be associative. In publications we can find some results on Hopf quasigroup : Brezi\'{n}ski research the Hopf modules and the fundamental theorem\cite{Br} on Hopf quasigroups, Brezi\'{n}ski with Jiao show the smash (co)products\cite{BJ1} and $R-$smash products\cite{BJ2} on Hopf quasigoups, Jiao and Wang give the smash biproducts\cite{JW} on  quasigroups, Klim and Majid introduce the bicrossproduct\cite{KM2} on Hopf quasigroups.

On the other hand, Hopf $\pi$-coalgebras\cite{Vi} were introduced by Virelizier, and then continued by Zunino and Wang. It turms out that many of the classical results in Hopf algebras can be generalized to the Hopf $\pi$-coalgebras, Virelizier gives a generalized version of the Fundamental theorem for Hopf algebras and introduced $\pi-$integrals, Zunino introduced Yetter-Drinfeld modules\cite{Zu1}, the Drifeld double, and a generalization of the center constrction of a monoidal category\cite{Zu}, Wang introduces Doi-Hopf modules\cite{Wa2}, entwined modules\cite{Wa5}, Drinfeld'codouble\cite{Wa3} and coalgebra Galois theory\cite{Wa4} for Hopf $\pi-$coalgebras, and he proves a version of Maschke'theorem\cite{Wa1}. Hopf $\pi$-algebras are dual of Hopf $\pi$-coalgebras\cite{Tu}.

In this paper, we introduce $Q-$graded Hopf quasigroups , they generalize the Hopf quasigroups like Hopf $\pi-$algebras generalize Hopf algebras and they generalize Hopf $\pi-$algebras like Hopf quasigroups generalize Hopf algebras. The paper is organized as follows: In section 1, we recall the definitions of quasigroups, Hopf $\pi-$algebras and Hopf quasigroups. In section 2, we give the definition of $Q-$graded Hopf quasigroups and generalize many classical theorems of Hopf quasigroups to $Q-$graded Hopf quasigroups. In section 3, we define $Q-$graded Hopf quasimodules and show a construct method and fundamental theorem of Hopf module on it. In section 4, we show the smash products on $Q-$graded Hopf quasigroups.

Throughout this article, $\pi$ is a group, $Q$ is a quasigroup and all the vector spaces, tensor products and maps are over a fixed field $k$.  For a coalgebra $C$, we will use the Heyneman-Sweedler's notation $\Delta(c)=  c_{1}\otimes c_{2},$
for any $c\in C$ (summation omitted).

\section{Preliminaries}
\def\theequation{1.\arabic{equation}}
\setcounter{equation} {0}

\begin{definition}
A quasigroup is a set $Q$ with a product, identity $e$ and with the property that for each $p\in Q$ there is $p^{-1}\in Q$ satisfying
$$p^{-1}(pq)=q,~~(qp)p^{-1}=q,~~\forall q\in Q.$$

\end{definition}

It is easy to see that in any quasigroup $Q$, one has unique inverses and
$$(p^{-1})^{-1}=p,~~(pq)^{-1}=q^{-1}p^{-1},~~\forall p,q\in Q.$$

A quasigroup is $flexile$ if $p(qp)=(pq)p$ for all $p,q\in Q$ and $alternative$ if also $p(pq)=(pp)q,~p(qq)=(pq)q$ for all $p,q\in Q$. It is called $Moufang$ ir $p(q(pr))=((pq)p)r$ for all $p,q,r\in Q $ or $commutative$ if $pq=qp$ for all $p,q\in Q$ or $associative$ if $p(qr)=(pq)r,$ for all $p,q,r\in Q $. Obviously, associative quasigroup is group.

\begin{remark}
Let $Q$ be a quasigroup, then the following identities are equivalent, $\forall p,q,r\in Q,$
(1) $p(q(pr))=((pq)p)r,$
(2) $((pq)r)q=p(q(rq)),$
(3) $(pq)(rp)=(p(qr))p.$
\end{remark}

\begin{definition}
A Hopf quasigroup is a possibly-noassociative but unital algebra $H$ equipped with algebra homomorphisms $\Delta:H\rightarrow H\otimes H,~\epsilon:H\rightarrow k$ forming a coassociative coaglebra and a map $S:H\rightarrow H$ such that

$S(h_1)(h_2g)=\epsilon(h)g=h_1(S(h_2)g)$, $(gh_1)S(h_2)=g\epsilon(h)=(g(S(h_1))h_2), ~$$\forall h,g\in H.$
\end{definition}

\begin{definition}
A $\pi-$graded algebra ($\pi-$algebra) is a family $A=\{A_p\}_{p\in \pi}$ of $k-$spaces endowed with a family $m=\{m_{p,q}:A_p\otimes A_q\rightarrow A_{pq}\}_{p,q\in \pi}$ of $k-$ maps (the $Q-$graded multiplication) and a $k-$linear map $\mu:k\rightarrow A_e$ (the unit) such that (we denote $m_{p,q}(a\otimes b)\equiv ab, ~ \mu(1_k)\equiv 1)$ $$~a(bc)=(ab)c, ~a1=a=1a,~~\forall p,q,r\in \pi,~a\in A_p, ~b\in A_q, ~c\in A_r.$$
\end{definition}

\begin{definition}
Let $A$ be a $\pi-$graded algebra, a $\pi-$graded left $A-$module is a family $M=\{M_p\}_{p\in \pi}$ of $k-$spaces endowed with a family $\varphi=\{\varphi_{p,q}:A_p\otimes M_q\rightarrow M_{pq}\}_{p,q\in \pi}$ of $k-$maps such that(we denote $\varphi (a\otimes m)\equiv a\cdot m$)
$$(ab)\cdot m=a\cdot (b\cdot m),~1\cdot m=m,~\forall p,q,r\in \pi,~a\in A_p,~b\in A_q,~m\in M_r.$$

A map of $\pi-$graded left $A-$modules is a family $f=\{f_p:M_p\rightarrow M^{'}_p\}_{p\in \pi}$ of $k-$maps such that  $$f_{pq}(a\cdot m)=a\cdot f_q(m),~\forall p,q\in \pi,~a\in A_p, ~m\in M_q.$$
\end{definition}

\begin{definition}
A $\pi-$graded Hopf algebra (Hopf $\pi-$algebra) is a $\pi-$graded algebra $H=\{H_p, m_{p,q},\mu\}_{p,q\in \pi}$ endowed with a family $S=\{S_p:H_p\rightarrow H_{p^{-1}}\}_{p\in \pi}$ of $k-$linear maps (the antipode) such that each $H_p$ is a coassociative counitary coalgebra with comultiplication $\Delta_p$ and counit map $\epsilon_p$, and $\{m_{p,q}\}_{p,q\in \pi}$ and $\mu$ are coalgebra maps and (we denote $\Delta_p(h)\equiv h_{(1,p)}\otimes h_{(2,p)}$)
$$S_p(h_{(1,p)})h_{(2,p)}=\epsilon_p(h)1=h_{(1,p)}S_p(h_{(2,p)}),~\forall p\in \pi,~ h\in H_p.$$
\end{definition}

\begin{definition}
Given a $k-$space $M,$ a $k-$map $R:M\otimes M\rightarrow M\otimes M$ is said to be a solution of the Long-equation if
$$R^{12}R^{23}=R^{23}R^{12},$$

where $R^{12}=R\otimes I,$ $R^{23}=I\otimes R:M\otimes M\otimes M\rightarrow M\otimes M\otimes M.$
\end{definition}

\section{$Q$-graded Hopf quasigroups}
\def\theequation{2.\arabic{equation}}
\setcounter{equation} {0} \hskip\parindent

\begin{definition}
A $Q-$graded Hopf quasigroup $H$ is

(1) A $Q-$graded algebra, i.e a family $H=\{H_p\}_{p\in Q}$ of $k-$spaces endowed with a family $m=\{m_{p,q}:H_p\otimes H_q\rightarrow H_{pq}\}_{p,q\in Q}$ of $k-$ maps (the $Q-$graded multiplication) and a $k-$linear map $\mu: k\rightarrow H_e$ (the unit) such that $h1=h=1h,~ \forall p\in Q,~h\in H_p.$

(2) Each $H_p$ is a coassociative counitary coalgebra with comultiplication $\Delta_p$ and counit $\epsilon_p$,

(3) $\{m_{p,q}\}$ are coalgebra maps, i.e
\begin{equation}
(hg)_{(1,pq)}\otimes (hg)_{(2,pq)}=h_{(1,p)}g_{(1,q)}\otimes h_{(2,p)}g_{(2,p)}, ~\forall p,q\in Q,~h\in H_p,~g\in H_q, \label{2a}
\end{equation}
\begin{equation}
\epsilon_{pq}(hg)=\epsilon_p(h)\epsilon_q(g),~\forall p,q\in Q,~h\in H_p,~g\in H_q. \label{2b}
\end{equation}

(4) $\mu$ is coalgebra map, i.e
$$\Delta_e(1)=1\otimes 1,~\epsilon_e(1)=1_k.$$

(5) A family $S=\{S_p:H_p\rightarrow H_{p^{-1}}\}_{p\in Q}$ of $k-$linear maps (the antipode) such that
\begin{equation}
S_p(h_{(1,p)})(h_{(2,p)}g)=\epsilon_p(h)g=h_{(1,p)}(S_p(h_{(2,p)})g),~\forall p,q\in Q,~ h\in H_p,~ g\in H_q. \label{2c}
\end{equation}
\begin{equation}
(gS_p(h_{(1,p)}))h_{(2,p)}=g\epsilon_p(h)=(gh_{(1,p)})S_p(h_{(2,p)}),~ \forall p,q\in Q,~ h\in H_p, ~g\in H_q.  \label{2d}
\end{equation}

\end{definition}

$H$ is a $Q-$graded Hopf quasigroup, then

(1) $H$ is $flexible$ if $Q$ is $flexible$ and
\begin{equation}
h_{(1,p)}(gh_{(2,p)})=(h_{(1,p)}g)h_{(2,p)},~\forall p,q\in Q,~h\in H_p,~ g\in H_q.\label{2j}
\end{equation}

(2) $H$ is $alternative$ if $H$ is flexible, $Q$ is $alternative$ and
\begin{equation}
h_{(1,p)}(h_{(2,p)}g)=(h_{(1,p)}h_{(2,p)})g,~ h(g_{(1,p)}g_{(2,p)})=(hg_{(1,p)})g_{(2,p)}, ~\forall p,q\in Q,~h\in H_p,~ g\in H_q.\label{2k}
\end{equation}

(3)$H$ is $Moufang$ if $Q$ is $Moufang$ and
\begin{equation}
 h_{(1,p)}(g(h_{(2,p)}f))=((h_{(1,p)}g)h_{(2,p)})f, ~\forall p,q,r\in Q,~ h\in H_p,~g\in H_q,~  f\in H_r.\label{2l}
 \end{equation}

(4) $H$ is $commutative$ if $Q$ is $commutative$ and
 \begin{equation}
 hg=gh,~\forall p,q\in Q,~h\in H_p, ~ g\in H_q. \label{2m}
 \end{equation}

(5) $H$ is $cocommutative$ if
\begin{equation}
\Delta_p(h)=h_{(1,p)}\otimes h_{(2,p)}=h_{(2,p)}\otimes h_{(1,p)},~\forall p\in Q,~h\in H_p. \label{2n}
\end{equation}
Now we show an example of $Q-$graded Hopf quasigroups.

\begin{example}
For any quasigroup $Q$, we can construct a $Q-$graded Hopf quasigroup as follows: a family  $kQ=\{H_p=(kp)\}_{p\in Q}$of $k-$spaces, the mulitiplcation $m_{p,q}(k_1p\otimes k_2q)=k_1k_2pq,$ the unitary element $e\in Q$, comultiplication $\Delta_p(p)=p\otimes p,$ counit $\epsilon_p(p)=1_k,$ antipode $S_p(p)=p^{-1}.$
\end{example}
\begin{proposition}
$H$ is a $Q-$graded Hopf quasigroup, then

(1) \begin{equation}
S_p(h_{(1,p)})h_{(2,p)}=\epsilon_p(h)1=h_{(1,p)}S_p(h_{(2,p)}),~ \forall p\in Q,~h\in H_p. \label{2e}
\end{equation}

(2) \begin{equation}
 S_{pq}(hg)=S_q(g)S_p(h), ~\forall p,q\in Q,~h\in H_p,g\in H_q. \label{2f}
 \end{equation}

(3) \begin{equation}S_p(h)_{(1,p^{-1})}\otimes S_p(h)_{(2,p^{-1})}=\Delta_{p^{-1}}(S_p(h))=S_{p}(h_{(2,p)})\otimes S_p(h_{(1,p)}),~  \forall p\in Q,~h\in H_p.\label{2g}
\end{equation}

(4) \begin{equation}
S_e(1)=1.\label{2h}
\end{equation}

(5) \begin{equation}
\epsilon_{p^{-1}} S_p(h)=\epsilon_p(h),~\forall p\in Q,~h\in H_p. \label{2i}
\end{equation}

\end{proposition}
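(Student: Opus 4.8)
The plan is to dispatch the three elementary identities \eqref{2e}, \eqref{2h} and \eqref{2i} by specialization, and then to confront the anti-(co)multiplicativity statements \eqref{2f} and \eqref{2g}, which are the substantial ones. For part (1), equation \eqref{2e}, I would simply put $g=1$ in the antipode axioms \eqref{2c} and \eqref{2d} and use the unit law $h1=h=1h$: the inner products $h_{(2,p)}1$ and $S_p(h_{(2,p)})1$ collapse and \eqref{2e} drops out at once. For part (4), equation \eqref{2h}, I would specialize the left half of \eqref{2e} to $h=1\in H_e$; since $\Delta_e(1)=1\o 1$ and $\epsilon_e(1)=1_k$, the left side reads $S_e(1)1=S_e(1)$ and the right side is $1$, giving $S_e(1)=1$. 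For part (5), equation \eqref{2i}, I would apply the counit $\epsilon_e$ to the left half of \eqref{2e} (both sides lie in $H_e$), use multiplicativity of the counit \eqref{2b} to produce $\epsilon_{p^{-1}}(S_p(h_{(1,p)}))\,\epsilon_p(h_{(2,p)})$ on the left, and then collapse the coproduct by the counit axiom $\sum h_{(1,p)}\epsilon_p(h_{(2,p)})=h$ together with linearity of $S_p$; the right side becomes $\epsilon_p(h)\epsilon_e(1)=\epsilon_p(h)$, which is \eqref{2i}. None of these should cause trouble.

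The heart of the proposition is part (2), equation \eqref{2f}: $S_{pq}(hg)=S_q(g)S_p(h)$. The classical Hopf-algebra proof identifies both sides as the convolution inverse of the multiplication and invokes uniqueness, but that route is closed here because $H$ is not associative and there is no convolution algebra to work in. Instead I would argue by a direct computation in which every re-bracketing is licensed by one of the four admissible patterns contained in \eqref{2c}--\eqref{2d}, which, crucially, hold for \emph{all} $g$ in any graded component and so supply the cancellation power that associativity would otherwise give. Concretely, I would introduce the auxiliary element
\[
P=\sum S_q(g_{(1,q)})\Big[S_p(h_{(1,p)})\big((h_{(2,p)}g_{(2,q)})\,S_{pq}(h_{(3,p)}g_{(3,q)})\big)\Big]\in H_{(pq)^{-1}},
\]
whose grading is verified using $(pq)^{-1}=q^{-1}p^{-1}$ (recorded after Definition 1.1) and the multiplicativity \eqref{2a} of $\Delta$. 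Collapsing the inner right factor by \eqref{2e} in the form $\sum (hg)_{(1,pq)}S_{pq}((hg)_{(2,pq)})=\epsilon_{pq}(hg)1$ and then applying the counit axiom to both the $h$- and $g$-legs turns $P$ into $S_q(g)S_p(h)$.

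The main obstacle is the reverse evaluation of $P$: peeling the two outer antipodes $S_q(g_{(1,q)})$ and $S_p(h_{(1,p)})$ off the left so as to reach $S_{pq}(hg)$. To apply the left identity in \eqref{2c} one must first bring the middle factor into the exact shape $g_{(2,q)}\cdot(\,\cdots)$, respectively $h_{(2,p)}\cdot(\,\cdots)$, and here the interleaving of the $h$- and $g$-legs is precisely what non-associativity makes delicate: a single illegal re-association $A(BC)=(AB)C$ would finish the computation, but that step is exactly the one that is forbidden. I expect to bridge this by running the left identities \eqref{2c} and the right identities \eqref{2d} in tandem along a chain of admissible re-associations --- the Hopf-level incarnation of the passage from the inverse property $p^{-1}(pq)=q=(qp)p^{-1}$ to the antiautomorphic inverse property $(pq)^{-1}=q^{-1}p^{-1}$ --- keeping careful track of the bracketings throughout.

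Finally, part (3), equation \eqref{2g}, is the comultiplicative dual of part (2). The plan there is to exhibit both $\Delta_{p^{-1}}(S_p(h))$ and $S_p(h_{(2,p)})\o S_p(h_{(1,p)})$ as collapses of a common auxiliary expression in $H_{p^{-1}}\o H_{p^{-1}}$, now using coassociativity and the multiplicativity \eqref{2a} of $\Delta$ in place of the multiplication axioms. I anticipate the same bracket-bookkeeping to be the remaining delicate point, and I would treat it by the same coordinated use of \eqref{2c} and \eqref{2d} as in part (2).
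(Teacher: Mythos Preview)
Your treatment of \eqref{2e}, \eqref{2h} and \eqref{2i} is correct and coincides with the paper's: these follow by specializing $g=1$, $h=1$, and by applying $\epsilon_e$ together with \eqref{2b}.

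For part (2), however, there is a genuine gap. Your auxiliary element
\[
P=\sum S_q(g_{(1,q)})\Big[S_p(h_{(1,p)})\big((h_{(2,p)}g_{(2,q)})\,S_{pq}(h_{(3,p)}g_{(3,q)})\big)\Big]
\]
does collapse to $S_q(g)S_p(h)$ via \eqref{2e}, as you say, but the reverse evaluation is not merely ``delicate'': with this bracketing there is \emph{no} admissible first step. To apply \eqref{2c} at the $S_p(h_{(1,p)})$ layer you would need the adjacent factor to be of the form $h_{(2,p)}(\cdots)$, whereas it is $(h_{(2,p)}g_{(2,q)})S_{pq}(\cdots)$; and the outer $S_q(g_{(1,q)})$ sees $S_p(h_{(1,p)})(\cdots)$, not $g_{(2,q)}(\cdots)$. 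Neither \eqref{2c} nor \eqref{2d} can be run in reverse to repair this without already re-associating. Your hoped-for ``chain of admissible re-associations'' does not exist for this $P$.

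The paper's resolution is to change the bracketing of the auxiliary from the outset:
\[
\widetilde P=\sum S_q(g_{(1,q)})\Big[\big(S_p(h_{(1,p)})(h_{(2,p)}g_{(2,q)})\big)\,S_{pq}(h_{(3,p)}g_{(3,q)})\Big].
\]
The point is that the ``easy'' evaluation still works, but via \eqref{2d} rather than \eqref{2e}: writing $X=h_{(2,p)}g_{(2,q)}$ and using \eqref{2a}, the bracket is $(S_p(h_{(1,p)})X_{(1,pq)})S_{pq}(X_{(2,pq)})=S_p(h_{(1,p)})\epsilon_{pq}(X)$, and $\widetilde P=S_q(g)S_p(h)$. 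Now the reverse evaluation is unblocked: \eqref{2c} collapses $S_p(h_{(1,p)})(h_{(2,p)}g_{(2,q)})$ to $\epsilon_p(h_{(1,p)})g_{(2,q)}$, leaving $S_q(g_{(1,q)})\big(g_{(2,q)}S_{pq}(hg_{(3,q)})\big)$, and a second application of \eqref{2c} on the $g$-layer yields $S_{pq}(hg)$. The key idea you are missing is that the full strength of \eqref{2d} (with an arbitrary left factor, not just $g=1$) lets you \emph{introduce} $S_{pq}$ with the bracketing $(\,\cdot\,X_{(1)})S_{pq}(X_{(2)})$, which is exactly the one compatible with the subsequent use of \eqref{2c}.

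For part (3) the same caution applies: the paper builds a specific auxiliary in $H_{p^{-1}}\otimes H_{p^{-1}}$ whose two collapses are driven by \eqref{2c} on one side and by \eqref{2a}$+$\eqref{2e} on the other, again with the bracketing chosen so that both moves are legal. Your plan as stated does not yet identify that expression or its bracketing, and the obstacle you flag will recur unless you make the analogous choice.
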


\begin{proof} (1) and (4) are obtained by the definition of $Q-$graded Hopf quasigroup.

(2)$$\aligned
&S_q(g)S_p(h)=S_q(g_{(1,q)}\epsilon_q(g_{(2,q)}))S_p(h_{(1,p)}\epsilon_p(h_{(2,p)})) \\
&\stackrel{(\ref{2b})}{=} S_q(g_{(1,q)})S_p(h_{(1,p)})\epsilon_{pq}(h_{(2,p)}g_{(2,q)})\\
&=S_q(g_{(1,q)})((S_p(h_{(1,p)})(h_{(2,p)}g_{(2,q)})_{(1,pq)})S_{pq}((h_{(2,p)}g_{(2,p)})_{(2,pq)}))\\
&=S_q(g_{(1,q)})((S_p(h_{(1,p)})(h_{(2,p)}g_{(2,q)}))S_{pq}(h_{(3,p)}g_{(3,q)}))\\
&=S_q(g_{(1,q)(1,q)})((S_p(h_{(1,p)(1,p)})(h_{(1,p)(2,p)}g_{(1,q)(2,q)}))S_{pq}(h_{(2,p)}g_{(2,q)}))\\
&\stackrel{(\ref{2c})}{=}S_q(g_{(1,q)(1,q)})(\epsilon_p(h_{(1,p)})g_{(1,q)(2,q)}S_{pq}(h_{(2,p)}g_{(2,q)}))\\
&=S_q(g_{(1,q)(1,q)})(g_{(1,q)(2,q)}S_{pq}(\epsilon_{p}(h_{(1,p)})h_{(2,p)}g_{(2,q)}))\\
&=S_q(g_{(1,q)(1,q)})(g_{(1,q)(2,q)}S_{pq}(hg_{(2,q)}))\\
&\stackrel{(\ref{2c})}{=}\epsilon_q(g_{(1,q)})S_{pq}(hg_{(2,q)})\\
&=S_{pq}(h\epsilon_q(g_{(1,q)})g_{(2,q)})=S_{pq}(hg).
\endaligned$$

(3)$$\aligned
&\Delta_{p^{-1}}(S_p(h))=S_p(h_{(2,p)})_{(1,p^{-1})}\otimes \epsilon_p(h_{(1,p)})S_p(h_{(2,p)})_{(2,p^{-1})}\\
&\stackrel{(\ref{2c})}=S_p(h_{(2,p)})_{(1,p^{-1})}\otimes S_p(h_{(1,p)(1,p)})(h_{(1,p)(2,p)}S_p(h_{(2,p)})_{(2,p^{-1})})\\
&=\epsilon_p(h_{(1,p)(1,p)(2,p)})S(h_{(2,p)})_{(1,p^{-1})}\\
&\otimes S_p(h_{(1,p)(1,p)(1,p)})(h_{(1,p)(2,p)}S_p(h_{(2,p)})_{(2,p^{-1})})\\ &\stackrel{(\ref{2c})}=S_p(h_{(1,p)(1,p)(2,p)(1,p)})(h_{(1,p)(1,p)(2,p)(2,p)}S_p(h_{(2,p)})_{(1,p^{-1})})\\
&\otimes S_p(h_{(1,p)(1,p)(1,p)})(h_{(1,p)(2,p)}S_p(h_{(2,p)})_{(2,p^{-1})})\\
&=S_p(h_{(2,p)})(h_{(3,p)}S_p(h_{(5,p)})_{(1,p^{-1})})\otimes S_p(h_{(1,p)})(h_{(4,p)}S_p(h_{(5,p)})_{(2,p^{-1})}))\\
&=S_p(h_{(1,p)(2,p)})(h_{(2,p)(1,p)(1,p)}S_p(h_{(2,p)(2,p)})_{(1,p^{-1})})\\
&\otimes S_p(h_{(1,p)(1,p)})(h_{(2,p)(1,p)(2,p)}S_p(h_{(2,p)(2,p)})_{(2,p^{-1})})\\
&\stackrel{(\ref{2a})}=S_p(h_{(1,p)(2,p)})(h_{(2,p)(1,p)}S_p(h_{(2,p)(2,p)}))_{(1,e)}
\otimes S_p(h_{(1,p)(1,p)})(h_{(2,p)(1,p)}S_p(h_{(2,p)(2,p)}))_{(2,e)}\\
&\stackrel{(\ref{2e})}=S_p(h_{(1,p)(2,p)})\epsilon_p(h_{(2,p)})\otimes S_p(h_{(1,p)(1,p)})\\
&=S_p(h_{(2,p)})\otimes S_p(h_{(1,p)}).
\endaligned$$

(5)
$$\begin{aligned}
&(\epsilon_{p^{-1}} S_p)(h)\\
&=\epsilon_{p^{-1}} (S_p(h_{(1,p)}))\epsilon_p(h_{(2,p)})\\
&\stackrel{(\ref{2b})}=\epsilon_{e}(S_p(h_{(1,p)})h_{(2,p)})\\
&=\epsilon_e(1\epsilon_p(h))\\
&=\epsilon_p(h).
\end{aligned}$$

\end{proof}
\begin{proposition}
Let $H$ be a $Q-$graded Hopf quasigroup, if $H$ is $commutative$ or $cocommutative$, then $S_{p^{-1}}S_p(h)=h,~\forall p\in Q,~h\in H_p.$
\end{proposition}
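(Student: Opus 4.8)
The plan is to set $T=S_{p^{-1}}\circ S_p\colon H_p\to H_p$ and to prove $T=\mathrm{id}$. The obvious classical route --- observing that $S^{2}$ and $\mathrm{id}$ are both convolution inverses of $S$ and invoking uniqueness of the inverse --- is \emph{not} available here, because the multiplication of $H$ is only unital and not associative, so there is no associative convolution algebra in which to cancel. \textbf{This non-associativity is the main obstacle}, and the whole strategy is designed to avoid any reassociation: I keep every expression in its given bracketing and let the quasigroup antipode axioms $(\ref{2c})$, $(\ref{2d})$ do the cancellations in place.

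First I would feed the element $k=S_p(h)\in H_{p^{-1}}$ into the antipode axioms of the coalgebra $H_{p^{-1}}$, whose antipode is $S_{p^{-1}}\colon H_{p^{-1}}\to H_p$ (using $(p^{-1})^{-1}=p$). By $(\ref{2g})$ one has $k_{(1,p^{-1})}\otimes k_{(2,p^{-1})}=S_p(h_{(2,p)})\otimes S_p(h_{(1,p)})$, and by $(\ref{2i})$ one has $\epsilon_{p^{-1}}(k)=\epsilon_p(h)$. Substituting these into $(\ref{2c})$ and $(\ref{2d})$, which hold for an arbitrary $g$, yields, for every $g$,
\[
T(h_{(2,p)})\bigl(S_p(h_{(1,p)})g\bigr)=\epsilon_p(h)g,\qquad \bigl(gS_p(h_{(2,p)})\bigr)T(h_{(1,p)})=g\epsilon_p(h).
\]
I would then specialize $g$ to a third coproduct leg $h_{(3,p)}$ (legitimate since these identities hold for all $g$, after one use of coassociativity), obtaining the two identities, valid in \emph{any} $Q$-graded Hopf quasigroup,
\[
T(h_{(2,p)})\bigl(S_p(h_{(1,p)})h_{(3,p)}\bigr)=h,\qquad \bigl(h_{(3,p)}S_p(h_{(2,p)})\bigr)T(h_{(1,p)})=h.
\]

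It remains to collapse one of these to $T(h)$. The key observation is that in each the inner, already-bracketed factor is \emph{not} yet of the form that $(\ref{2e})$ cancels, since the antipode sits next to a non-adjacent leg or in the wrong order; this is exactly where the hypothesis enters. \textbf{Cocommutative case:} using $(\ref{2n})$ to exchange the first two legs of $\Delta^{2}(h)$, the first identity becomes $T(h_{(1,p)})\bigl(S_p(h_{(2,p)})h_{(3,p)}\bigr)$, whose inner factor $S_p(h_{(2,p)})h_{(3,p)}$ now collapses to $\epsilon_p(\,\cdot\,)1$ by $(\ref{2e})$; since $T(h_{(1,p)})\cdot1=T(h_{(1,p)})$, the left side reduces to $T(h)$, giving $h=T(h)$. \textbf{Commutative case:} in the second identity, $(\ref{2m})$ rewrites the inner factor $h_{(3,p)}S_p(h_{(2,p)})$ as $S_p(h_{(2,p)})h_{(3,p)}$, which again collapses to $\epsilon_p(\,\cdot\,)1$ by $(\ref{2e})$, leaving $\bigl(\epsilon_p(\,\cdot\,)1\bigr)T(h_{(1,p)})=T(h)$, so again $h=T(h)$.

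In both cases the sole role of (co)commutativity is to bring the pair ``antipode and its neighbouring leg'' into the already-present inner bracket and into the correct order, so that the quasigroup antipode identity applies with no reassociation at all. I expect the only delicate bookkeeping to be the coassociativity/relabelling that justifies the substitution $g=h_{(3,p)}$ and the leg exchange; the cancellations themselves are immediate from the preceding Proposition together with the defining axioms.
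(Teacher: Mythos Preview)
Your proof is correct and follows essentially the same route as the paper: both apply the antipode axioms $(\ref{2c})$, $(\ref{2d})$ to $k=S_p(h)\in H_{p^{-1}}$ via $(\ref{2g})$ and $(\ref{2i})$, and then use (co)commutativity together with $(\ref{2e})$ to collapse the remaining factor. The only difference is presentational---you first isolate the two ``universal'' identities $T(h_{(2,p)})\bigl(S_p(h_{(1,p)})h_{(3,p)}\bigr)=h$ and $\bigl(h_{(3,p)}S_p(h_{(2,p)})\bigr)T(h_{(1,p)})=h$ and then collapse, whereas the paper computes forward from $S_{p^{-1}}S_p(h)$; in the commutative case your organization is marginally more economical, invoking $(\ref{2m})$ once on the inner factor rather than reversing the entire product.
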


\begin{proof}
If $H$ is commutative,
$$\aligned
&S_{p^{-1}}S_p(h)=S_{p^{-1}}(S_p(h_{(1,p)}))\epsilon_p(h_{(2,p)})\\
&\stackrel{(\ref{2e})}=S_{p^{-1}}(S_p(h_{(1,p)}))(S_p(h_{(2,p)})h_{(3,p)})\\
&=S_{p^{-1}}(S_p(h_{(1,p)(1,p)}))(S_p(h_{(1,p)(2,p)})h_{(2,p)})\\
&\stackrel{(\ref{2g})}=S_{p^{-1}}(S_p(h_{(1,p)})_{(2,p^{-1})})(S_p(h_{(1,p)})_{(1,p^{-1})}h_{(2,p)})\\
&\stackrel{(\ref{2m})}=(h_{(2,p)}S_p(h_{(1,p)})_{(1,p^{-1})})S_{p^{-1}}(S_p(h_{(1,p)})_{(2,p^{-1})})\\
&\stackrel{(\ref{2d})}=h_{(2,p)}\epsilon_{p^{-1}}(S_p(h_{(1,p)}))\\
&\stackrel{(\ref{2i})}=h_{(2,p)}\epsilon_p(h_{(1,p)})=h,\\
\endaligned$$

If $H$ is cocommutative, $$\aligned
&S_{p^{-1}}S_p(h)=S_{p^{-1}}(S_p(h_{(1,p)}))\epsilon_p(h_{(2,p)})\\
&\stackrel{(\ref{2e})}=S_{p^{-1}}(S_p(h_{(1,p)}))(S_p(h_{(2,p)})h_{(3,p)})\\
&=S_{p^{-1}}(S_p(h_{(1,p)(1,p)}))(S_p(h_{(1,p)(2,p)})h_{(2,p)})\\
&\stackrel{(\ref{2g})}=S_{p^{-1}}(S_p(h_{(1,p)})_{(2,p^{-1})})(S_p(h_{(1,p)})_{(1,p^{-1})}h_{(2,p)})\\
&\stackrel{(\ref{2n})}=S_{p^{-1}}(S_p(h_{(1,p)})_{(1,p^{-1})})(S_p(h_{(1,p)})_{(2,p^{-1})}h_{(2,p)})\\
&\stackrel{(\ref{2c})}=\epsilon_{p^{-1}}(S_p(h_{(1,p)}))h_{(2,p)}\\
&\stackrel{(\ref{2i})}=\epsilon_{p}(h_{(1.p)})h_{(2,p)}=h.\\
\endaligned$$
\end{proof}

\begin{proposition}
Let $H$ be a $Moufang$ $Q-$graded Hopf quasigroup such that $S_p$ is invertible for any $p\in Q$, then $\forall p,q,r\in Q,~h\in H_p,~g\in H_q,~f\in H_r,$ the following three conditions are equivalent :

(1) $h_{(1,p)}(g(h_{(2,p)}f))=((h_{(1,p)}g)h_{(2,p)})f$,

(2) $((hg_{(1,q)})f)g_{(2,q)}=h(g_{(1,p)}(fg_{(2,q)}))$,

(3) $(h_{(1,p)}g)(fh_{(2,p)})=(h_{(1,p)}(gf))h_{(2,p)}$.
\end{proposition}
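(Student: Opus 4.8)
The plan is to read these three identities as the Hopf-quasigroup incarnation of the three equivalent Moufang identities for the underlying quasigroup $Q$ recorded in Remark 1.2: the element that occurs twice in each loop identity is replaced by the two comultiplication legs $h_{(1,p)},h_{(2,p)}$ (resp. $g_{(1,q)},g_{(2,q)}$). Because $Q$ is Moufang, Remark 1.2 guarantees that in each of (1), (2), (3) the two sides lie in one and the same graded component, so the statements are well posed. Before starting I would record the degenerate consequences that will be used repeatedly: specializing $f=1$ in (1) yields the flexibility identity (\ref{2j}), $h_{(1,p)}(gh_{(2,p)})=(h_{(1,p)}g)h_{(2,p)}$, while the specializations $g=1$ in (1) and $f=1$ in (2) reproduce the alternative-type relations (\ref{2k}). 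I would then establish the two equivalences $(1)\Leftrightarrow(2)$ by an antipode argument and $(2)\Leftrightarrow(3)$ by a direct Sweedler computation, which together give all the asserted equivalences.

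The clean part is $(1)\Leftrightarrow(2)$, obtained simply by applying the antipode to an instance of (1). Using the anti-multiplicativity (\ref{2f}) three times, the image under $S$ of each side of (1) has all its products reversed; using the co-anti-multiplicativity (\ref{2g}), the pair $S_p(h_{(1,p)})\otimes S_p(h_{(2,p)})$ becomes the two legs of $S_p(h)$ in the opposite order. A short bookkeeping shows that applying $S$ to (1) with arguments $h,g,f$ produces precisely identity (2) with $h,g,f$ replaced by $S_r(f),S_p(h),S_q(g)$. Since every $S_p$ is invertible, these preimages range over all elements, so (2) holds universally; running the same computation on (2) returns (1), giving the converse. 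The invertibility of $S$ is used here only to pass from ``holds for all values of $S(-)$'' to ``holds for all arguments''.

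The genuine obstacle is linking (3) to the other two, because (3) is \emph{invariant} under the very operation that exchanges (1) and (2): applying $S$ together with (\ref{2f}) and (\ref{2g}) sends (3) back to (3), so the reversal trick produces nothing new. I would therefore prove $(2)\Rightarrow(3)$ by a direct computation. First I would rewrite the right-hand side of (3) by flexibility (\ref{2j}), $(h_{(1,p)}(gf))h_{(2,p)}=h_{(1,p)}((gf)h_{(2,p)})$, so that (3) becomes $(h_{(1,p)}g)(fh_{(2,p)})=h_{(1,p)}((gf)h_{(2,p)})$. Then I would insert a trivial antipode pair via (\ref{2e}) and expand using coassociativity and the comultiplicativity (\ref{2a}), so as to manufacture the configuration $((xy_{(1)})z)y_{(2)}$ that appears on the left of the hypothesis (2); after applying (2) I would contract the spurious legs again through the antipode axioms (\ref{2c}), (\ref{2d}) and (\ref{2e}). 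The main difficulty is exactly this matching step: one must introduce and then annihilate the extra comultiplication legs in the correct positions while keeping every intermediate expression correctly graded (again invoking the $Q$-Moufang identities of Remark 1.2 to control the gradings), and it is the invertibility of $S$ that legitimizes solving for the inserted preimage. Finally I would close with $(3)\Rightarrow(2)$; since (3) is self-dual under $S$, this follows either by repeating the analogous computation or by applying $S$ to reduce it to the direction already proved. I expect essentially all of the labour, and all of the risk of index errors, to be concentrated in the $(2)\Rightarrow(3)$ calculation.
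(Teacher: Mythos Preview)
Your plan is essentially the paper's: the antipode together with (\ref{2f})--(\ref{2g}) interchanges (1) and (2), and the link to (3) is a direct Sweedler computation that creates extra legs via (\ref{2c})/(\ref{2e}), applies the Moufang-type hypothesis once, and then kills those legs with (\ref{2d}). The only organizational difference is that the paper connects (3) with (1) rather than with (2), and it does not pass through flexibility (\ref{2j}) as a named intermediate step---it works directly with the antipode axioms. Your use of flexibility is harmless, since you correctly observe it as a specialization of the current hypothesis, but it is not needed.

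One small slip to fix: in your last paragraph you say that $(3)\Rightarrow(2)$ can be obtained ``by applying $S$ to reduce it to the direction already proved.'' But you yourself established that (3) is self-dual under $S$; applying $S$ to (3) returns (3), not (2), so the antipode shortcut does nothing here. The paper handles $(3)\Rightarrow(1)$ by an honest computation mirroring the forward direction (insert antipode legs, apply (3), substitute, contract), and you should plan to do the same for $(3)\Rightarrow(2)$---your alternative ``repeat the analogous computation'' is the correct option.
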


\begin{proof}
$(1)\Rightarrow(2)$
$$\aligned
&((S_r(f)S_p(h)_{(1,p^{-1})})S_q(g))S_p(h)_{(2,p^{-1})}\\
&\stackrel{(\ref{2g})}=((S_r(f)S_p(h_{(2,p)}))S_q(g))S_p(h_{(1,p)})\\
&\stackrel{(\ref{2f})}=S_{p(q(pr))}(h_{(1,p)}(g(h_{(2,p)}f)))\\
&=S_{((pq)p)r}((h_{(1,p)}g)h_{(2,p)})f\\
&\stackrel{(\ref{2f})}=S_r(f)(S_q(h_{(2,p)})(S_q(g)S_p(h_{(1,p)})))\\
&\stackrel{(\ref{2g})}=S_r(f)(S_q(h)_{(1,p^{-1})}(S_q(g)S_p(h)_{(2,p^{-1})}))\\
\endaligned$$

using $S_p$ are invertible and $H$ is $Moufang$, the above equation is equivalent to
$$((hg_{(1,q)})f)g_{(2,q)}=h(g_{(1,p)}(fg_{(2,q)})),~\forall p,q,r\in Q,~h\in H_p,~g\in H_q,~f\in H_r.$$

$(2)\Rightarrow(1)$ The proof is similarly to $(1)\Rightarrow(2)$.

$(1)\Rightarrow (3)$
$$\aligned
&h(gf)=h_{(1,p)}\epsilon_p(h_{(2,p)})(gf)\\
&=h_{(1,p)}(g\epsilon_p(h_{(2,p)})f)\\
&\stackrel{(\ref{2c})}=h_{(1,p)}(g(h_{(2,p)(1,p)}(S_p(h_{(2,p)(2,p)})f)))\\
&=h_{(1,p)(1,p)}(g(h_{(1,p)(2,p)}(S_p(h_{(2,p)})f)))\\
&=((h_{(1,p)(1,p)}g)h_{(1,p)(2,p)})(S_p(h_{(2,p)})f),\\
\endaligned$$

replacing $g$ with $gf_{(1,r)}$ and $f$ with $S_r(f_{(2,r)})$, we have:
$$hg\epsilon_r(f)\stackrel{(\ref{2d})}=h((gf_{(1,r)})S_r(f_{(2,r)}))\stackrel{(\ref{2f})}=((h_{(1,p)(1,p)}(gf_{(1,r)}))h_{(1,p)(2,p)})S_{rp}(f_{(2,r)}h_{(2,p)}),$$

replacing $h$ with $h_{(1,p)}$ and $f$ with $f_{(1,r)}$, multiply on the right by $f_{(2,r)}h_{(2,p)}$, we have:
$$\epsilon_r(f_{(1,r)})(h_{(1,p)}g)(f_{(2,r)}h_{(2,p)})$$
$$=(((h_{(1,p)(1,p)(1,p)}(gf_{(1,r)(1,r)}))h_{(1,p)(1,p)(2,p)}
)S_{rp}(f_{(1,r)(2,r)}h_{(1,p)(2,p)}))(f_{(2,r)}h_{(2,p)}),$$

the left side of the above equation equals $(h_{(1,p)}g)(fh_{(2,p)}),$ now consider the right side of the equation: by the coassociativity and $\Delta_p$ ia algebra map, the right side equals:
$$\aligned
&=(((h_{(1,p)}(gf_{(1,r)}))h_{(2,p)})S_{rp}(f_{(2,r)}h_{(3,p)}))(f_{(3,r)}h_{(4,p)})\\
&\stackrel{(\ref{2a})}=(((h_{(1,p)}(gf_{(1,r)}))h_{(2,p)})S_{rp}((f_{(2,r)}h_{(3,p)})_{(1,rp)}))((f_{(2,r)}h_{(3,p)})_{(2,rp)})\\
&\stackrel{(\ref{2d})}=((h_{(1,p)}(gf_{(1,r)}))h_{(2,p)})\epsilon_{rp}(f_{(2,r)}h_{(3,p)})\\
&\stackrel{(\ref{2b})}=((h_{(1,p)}(gf_{(1,r)}))h_{(2,p)})\epsilon_p(h_{(3,p)})\epsilon_r(f_{(2,r)})\\
&=(h_{(1,p)}(gf))h_{(2,p)},\\
\endaligned$$

so,
$$(h_{(1,p)}g)(fh_{(2,p)})=(h_{(1,p)}(gf))h_{(2,p)},~\forall p,q,r\in Q,~h\in H_p,~g\in H_q,~f\in H_r.$$

$(3)\Rightarrow (1)$ Assume (3) holds, then
$$((h_{(1,p)(1,p)}g)(f_{(1,r)}h_{(1,p)(2,p)}))S_{rp}(f_{(2,r)}h_{(2,p)})$$
$$=((h_{(1,p)(1,p)}(gf_{(1,r)}))h_{(1,p)(2,p)})S_{rp}(f_{(2,r)}h_{(2,p)}),$$

the left side of above equation equals:
$$\aligned
&=((h_{(1,p)}g)(f_{(1,r)}h_{(2,p)}))S_{rp}(f_{(2,r)}h_{(3,p)})\\
&\stackrel{(\ref{2a})}=((h_{(1,p)}g)(fh_{(2,p)})_{(1,rp)})S_{rp}((fh_{(2,p)})_{(2,rp)})\\
&\stackrel{(\ref{2d})}=(h_{(1,p)}g)\epsilon_{rp}(fh_{(2,p)})\\
&\stackrel{(\ref{2b})}=(h_{(1,p)}g)\epsilon_r(f)\epsilon_p(h_{(2,p)})\\
&=(hg)\epsilon_r(f),\\
\endaligned$$

so, we have:
$$hg\epsilon_r(f)=((h_{(1,p)(1,p)}(gf_{(1,r)}))h_{(1,p)(2,p)})(S_p(h_{(2,p)})S_r(f_{(2,r)})),$$

replacing $g$ with $gS_r(f_{(1,r)})$ and $f$ with $f_{(2,r)}$,
$$h(gS_r(f_{(1,r)}))\epsilon_r(f_{(2,r)})=((h_{(1,p)(1,p)}((gS_r(f_{(1,r)}))f_{(2,r)(1,r)}))h_{(1,p)(2,p)})
(S_p(h_{(2,p)})S_r(f_{(2,r)(2,r)})),$$

so,
$$h(gS_r(f))=((h_{(1,p)(1,p)}g)h_{(1,p)(2,p)})(S_p(h_{(2,p)})S_r(f)),$$

replacing $h$ with $h_{(1,p)}$ and $f$ with $S^{-1}_{pr}(h_{(2,p)}f)$, we have:
$$h_{(1,p)}(g(h_{(2,p)}f))=((h_{(1,p)(1,p)(1,p)}g)h_{(1,p)(1,p)(2,p)})(S_{p}(h_{(1,p)(2,p)})(h_{(2,p)}f))
\stackrel{(\ref{2c})}=((h_{(1,p)}g)h_{(2,p)})f.$$

\end{proof}

\begin{lemma}
Let $H$ be a cocommutative and flexible $Q-$graded Hopf quasigroup, then
$$h_{(1,p)}(gS_p(h_{(2,p)}))=(h_{(1,p)}g)S_p(h_{(2,p)}), ~\forall p,q\in Q,~h\in H_p,~g\in H_q.$$
\end{lemma}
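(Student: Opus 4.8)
The first thing I notice is that flexibility \eqref{2j} cannot be applied to the two sides directly: the left factor $h_{(1,p)}$ lies in $H_p$ while the right factor $S_p(h_{(2,p)})$ lies in $H_{p^{-1}}$, so $h_{(1,p)}\otimes S_p(h_{(2,p)})$ is not the comultiplication of a single homogeneous element and the pair $(h_{(1,p)},S_p(h_{(2,p)}))$ is not a legitimate ``flexible pair''. The plan is therefore to bridge the two gradings by means of the antipode, so that a genuine flexibility move can be carried out on the untwisted legs of $h$ and the twist is reintroduced only at the very end.

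The crux of the argument is the auxiliary ``capped'' identity
$$\big(h_{(1,p)}(gS_p(h_{(2,p)}))\big)h_{(3,p)}=hg,$$
which I would prove using a three-fold comultiplication of $h$. First I use cocommutativity \eqref{2n} to interchange the second and third legs, rewriting the left-hand side as $\big(h_{(1,p)}(gS_p(h_{(3,p)}))\big)h_{(2,p)}$. This now has exactly the shape $(a_{(1)}x)a_{(2)}$ of the right-hand side of \eqref{2j}, with $a=h$ and $x=gS_p(h_{(3,p)})$, so flexibility turns it into $h_{(1,p)}\big((gS_p(h_{(3,p)}))h_{(2,p)}\big)$. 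A second use of cocommutativity \eqref{2n} swaps the two inner legs back, after which the inner expression $(gS_p(h_{(2,p)}))h_{(3,p)}$ collapses to $g\,\epsilon_p(h_{(2,p)})$ by the antipode axiom \eqref{2d}, and the counit property then leaves $hg$.

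With the capped identity available, the second step is to ``uncap'' it. Reading \eqref{2d} backwards gives $h_{(1,p)}(gS_p(h_{(2,p)}))=\big[\big(h_{(1,p)}(gS_p(h_{(2,p)}))\big)h_{(3,p)}\big]S_p(h_{(4,p)})$, since the last two legs cancel via $(y\,h_{(3,p)})S_p(h_{(4,p)})=y\,\epsilon_p(h_{(3,p)})$. Substituting the capped identity into the bracket (applied to the first comultiplication factor of $h$) replaces that bracket by $h_{(1,p)}g$, and the trailing antipode becomes $S_p(h_{(2,p)})$, which produces precisely $(h_{(1,p)}g)S_p(h_{(2,p)})$. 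Along the way I would keep track of the gradings to confirm each move is licensed: the quasigroup identities $(qp^{-1})p=q$ and the flexibility relation $p(yp)=(py)p$ in $Q$ guarantee that \eqref{2d} and \eqref{2j} indeed apply in the twisted gradings that occur here.

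I expect the main obstacle to be the capped identity, not the uncapping. The difficulty is caused entirely by the absence of associativity: the factors $h_{(1,p)}$, $g$, $S_p(h_{(2,p)})$ and the cap $h_{(3,p)}$ cannot simply be re-bracketed, so every rearrangement must be realized by exactly one of cocommutativity, flexibility, or an antipode axiom, each of which applies only to a specific pattern of adjacent legs. The role of cocommutativity \eqref{2n} is precisely to permute the comultiplication legs into the unique adjacency in which flexibility and then \eqref{2d} become applicable; making this bookkeeping line up, so that the antipode cancellation yields a bare counit rather than a leftover twisted factor, is the delicate point on which the whole proof hinges.
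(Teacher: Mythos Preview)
Your proposal is correct and follows essentially the same route as the paper: both establish the capped identity $\big(h_{(1,p)}(gS_p(h_{(2,p)}))\big)h_{(3,p)}=hg$ via cocommutativity~\eqref{2n}, flexibility~\eqref{2j}, another cocommutativity swap, and the antipode axiom~\eqref{2d}, and then uncap using~\eqref{2d} again. Your write-up of the uncapping step is in fact more explicit than the paper's, which simply asserts the conclusion once the two capped expressions are shown to agree.
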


\begin{proof}
$$\begin{aligned}
&(h_{(1,p)}(gS_p(h_{(1,p)(2,p)})))h_{(2,p)}\\
&\stackrel{(\ref{2n})}=(h_{(1,p)}(gS_p(h_{(3,p)})))h_{(2,p)}\\
&\stackrel{(\ref{2j})}=h_{(1,p)}((gS_p(h_{(3,p)}))h_{(2,p)})\\
&\stackrel{(\ref{2n})}=h_{(1,p)}((gS_p(h_{(2,p)(1,p)}))h_{(2,p)(2,p)})\\
&\stackrel{(\ref{2d})}=h_{(1,p)}(g\epsilon_p(h_{(2,p)}))\\
&=(h_{(1,p)}g)\epsilon_p(h_{(2,p)})\\
&\stackrel{(\ref{2d})}=((h_{(1,p)}g)S_p(h_{(2,p)(1,p)}))h_{(2,p)(2,p)}\\
&=(h_{(1,p)(1,p)}g)S_p(h_{(1,p)(2,p)})h_{(2,p)},\\
\end{aligned}$$

So,
$$h_{(1,p)}(gS_p(h_{(2,p)}))=(h_{(1,p)}g)S_p(h_{(2,p)}),~\forall p,q\in Q,~h\in H_p,~g\in H_q.$$
\end{proof}

Therfore we have a notion of left adjoint action of a $Q-$graded Hopf quasigroup $H$ when it is cocommutative and flexible.

\begin{definition}
Let $Q$ be an associative quasigroup and $H$ be a $Q-$graded Hopf quasigroup, an associator $\delta$ be defined by
 $$(hg)f=\delta(h_{(1,p)},g_{(1,p)},f_{(1,r)})(h_{(2,p)}(g_{(2,p)}f_{(2,r)})),~\forall p,q,r\in Q,~h\in H_p,~g\in H_q,~f\in H_r.$$
\end{definition}

\begin{theorem}Let $Q$ be an associative quasigroup and $H$ be a $Q-$graded Hopf quasigroup,

(1) the associator $\delta$ exists and is uniquely determined as
$$\delta(h,g,f)=((h_{(1,p)}g_{(1,q)})f_{(1,r)})S_{pqr}(h_{(2,p)}(g_{(2,q)}f_{(2,r)})),~\forall p,q,r\in Q,~h\in H_p,~g\in H_q,~f\in H_r.$$

(2) $\delta(1,h,g)=\delta(h,1,g)=\delta(h,g,1)=\epsilon_p(h)\epsilon_q(g)1.$

(3)$\delta(h_{(1,p)},S_p(h_{(2,p)}),g)=\delta(S_p(h_{(1,p)}),h_{(2,p)},g)=\epsilon_p(h)\epsilon_q(g)1,$

$\delta(h,g_{(1,q)},S_q(g_{(2,q)}))=\delta(h, S_q(g_{(1,q)}),g_{(2,q)})=\epsilon_p(h)\epsilon_q(g)1,$

$\delta(h_{(1,p)}g_{(1,q)},S_q(g_{(2,q)}),S_p(h_{(2,p)}))=\delta(S_p(h_{(1,p)})S_q(g_{(1,q)}),g_{(2,q)},h_{(2,p)})
=\epsilon_p(h)\epsilon_q(g)1,$

$\delta(S_p(h_{(1,p)}), S_q(g_{(1,q)}), g_{(2,q)}h_{(2,p)})=\delta(h_{(1,p)},g_{(1,p)},S_q(g_{(2,q)})S_p(h_{(2,p)}))=\epsilon_p(h)\epsilon_q(g)1,$

$\delta(S_p(h_{(1,p)}),h_{(2,p)}S_q(g_{(1,q)}),g_{(2,q)})=\delta(h_{(1,p)},S_p(h_{(2,p)})g_{(1,q)},S_q(g_{(2,q)}))=\epsilon_p(h)\epsilon_q(g)1.$
\end{theorem}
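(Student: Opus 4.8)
The plan is to read $\delta$ as the convolution of the two bracketings of the triple product against the antipode. Writing $\mu_L(h,g,f)=(hg)f$ and $\mu_R(h,g,f)=h(gf)$, both maps land in $H_{pqr}$ and are coalgebra maps by (\ref{2a}), so the proposed expression $((h_{(1,p)}g_{(1,q)})f_{(1,r)})S_{pqr}(h_{(2,p)}(g_{(2,q)}f_{(2,r)}))$ lands in $H_{pqr(pqr)^{-1}}=H_e$, the right target for an associator. For part (1) I would establish existence and uniqueness by one and the same antipode telescoping. For uniqueness, substitute the defining relation into the left-associated factor $(h_{(1,p)}g_{(1,q)})f_{(1,r)}$ of the proposed formula; after recoassociating, this becomes $\big(\delta(h_{(1,p)},g_{(1,q)},f_{(1,r)})\,Y_{(1,pqr)}\big)S_{pqr}(Y_{(2,pqr)})$ with $Y=h_{(2,p)}(g_{(2,q)}f_{(2,r)})$, and the right-hand form $(gh_{(1,p)})S_p(h_{(2,p)})=g\epsilon_p(h)$ of (\ref{2d}) collapses $Y$ against $S(Y)$, leaving $\delta(h_{(1,p)},g_{(1,q)},f_{(1,r)})\epsilon(Y)=\delta(h,g,f)$ once the counit absorbs the extra copies. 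For existence I run the same computation the other way: inserting the formula into $\delta(h_{(1,p)},g_{(1,q)},f_{(1,r)})(h_{(2,p)}(g_{(2,q)}f_{(2,r)}))$ and applying the left-hand form $(gS_p(h_{(1,p)}))h_{(2,p)}=g\epsilon_p(h)$ of (\ref{2d}) telescopes the right-associated product with its antipode and returns $(hg)f$.

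Part (2) falls straight out of the closed formula. Using $\Delta_e(1)=1\otimes 1$ together with the unit axiom, each of $\delta(1,h,g)$, $\delta(h,1,g)$ and $\delta(h,g,1)$ collapses to $(h_{(1,p)}g_{(1,q)})S_{pq}(h_{(2,p)}g_{(2,q)})$; since $\Delta$ is multiplicative (\ref{2a}) we have $h_{(1,p)}g_{(1,q)}\otimes h_{(2,p)}g_{(2,q)}=\Delta_{pq}(hg)$, so this equals $(hg)_{(1,pq)}S_{pq}((hg)_{(2,pq)})=\epsilon_{pq}(hg)1=\epsilon_p(h)\epsilon_q(g)1$ by (\ref{2e}) and (\ref{2b}).

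For part (3) the uniform strategy is: substitute the arguments into the closed formula of part (1), use the anti-coalgebra law (\ref{2g}) and the anti-multiplicativity (\ref{2f}) to resolve the antipodes of the (possibly composite) arguments, and then telescope with (\ref{2c}), (\ref{2d}) and (\ref{2e}). For example, $\delta(h_{(1,p)},S_p(h_{(2,p)}),g)$ becomes, after (\ref{2g}), the expression $((h_{(1,p)}S_p(h_{(4,p)}))g_{(1,q)})S(h_{(2,p)}(S_p(h_{(3,p)})g_{(2,q)}))$; axiom (\ref{2c}) contracts the adjacent pair $h_{(2,p)},h_{(3,p)}$ inside the last factor, the counit merges the surviving copies, and two applications of (\ref{2e}) reduce it to $\epsilon_p(h)\epsilon_q(g)1$. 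Each of the remaining identities runs the same way once the correct telescoping pair is identified.

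The main obstacle is the index bookkeeping for the identities of part (3) whose first slot is a genuine product, for instance $\delta(h_{(1,p)}g_{(1,q)},S_q(g_{(2,q)}),S_p(h_{(2,p)}))$. Here one must first expand the product with (\ref{2a}), and because $H$ is not associative the antipode axioms (\ref{2c})/(\ref{2d}) may only be applied in exactly the bracketing in which a factor and its antipode sit adjacent; thus the order of the telescoping steps is forced and has to be tracked carefully, with no freedom to reassociate. Everything past that point is routine Sweedler manipulation together with the counit.
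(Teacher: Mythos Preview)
Your proposal is correct and follows essentially the same route as the paper: for (1) you telescope with (\ref{2d}) in both directions to obtain uniqueness and existence, for (2) you reduce via (\ref{2a}) and (\ref{2e}) exactly as the paper does, and for (3) your worked example $\delta(h_{(1,p)},S_p(h_{(2,p)}),g)$ is the same one the paper computes before declaring ``the rest are similar.'' Your caveat about the non-associative bracketing in the product-slot cases of (3) is apt and is precisely the care one must take, but introduces no new ideas beyond those already in your outline.
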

\begin{proof} (1)
We suppose $\delta$ exists then applying it to the rebracket, then
$$\aligned
&((h_{(1,p)}g_{(1,q)})f_{(1,r)})S_{pqr}(h_{(2,p)}(g_{(2,q)}f_{(2,r)}))\\
&=(\delta(h_{(1,p)(1,p)},g_{(1,q)(1,q)},f_{(1,r)(1,r)})(h_{(1,p)(2,p)}(g_{(1,q)(2,q)}f_{(1,r)(2,r)})))
S_{pqr}(h_{(2,p)}(g_{(2,q)}f_{(2,r)}))\\
&\stackrel{(\ref{2a})}=(\delta(h_{(1,p)},g_{(1,q)},g_{(1,r)})(h_{(2,p)}(g_{(2,q)}f_{(2,r)}))_{(1,pqr)})
S_{pqr}((h_{(2,p)}(g_{(2,q)}f_{(2,r)}))_{(2,pqr)})\\
&\stackrel{(\ref{2d})}=\delta(h,g,f),\\
\endaligned$$

we verify similarily,
$$\aligned
&\delta(h_{(1,p)},g_{(1,q)},f_{(1,r)})(h_{(2,p)}(g_{(2,q)}f_{(2,r)}))\\
&=(((h_{(1,p)(1,p)}g_{(1,q)(1,q)})f_{(1,r)(1,r)})S_{pqr}(h_{(1,p)(2,p)}(g_{(1,q)(2,q)}f_{(1,r)(2,r)})))(h_{(2,p)}(g_{(2,q)}f_{(2,r)}))\\
&=(((h_{(1,p)}g_{(1,p)})f_{(1,r)})S_{pqr}(h_{(2,p)(1,p)}(g_{(2,q)(1,q)}f_{(2,r)(1,r)})))(h_{(2,p)(2,p)}(g_{(2,q)(2,q)}f_{(2,r)(2,r)}))\\
&=(((h_{(1,p)}g_{(1,p)})f_{(1,r)})S_{pqr}((h_{(2,p)}(g_{(2,q)}f_{(2,r)}))_{(1,pqr)}))(h_{(2,p)}(g_{(2,q)}f_{(2,r)}))_{(2,pqr)}\\
&\stackrel{(\ref{2d})}=((h_{(1,p)}g_{(1,p)})f_{(1,r)})\epsilon_{pqr}(h_{(2,p)}(g_{(2,q)}f_{(2,r)}))\stackrel{(\ref{2b})}=(hg)f.
\endaligned$$

(2) $$\delta(1,h,g)=(h_{(1,p)}g_{(1,q)})S_{pq}(h_{(2,p)}g_{(2,q)})=(hg)_{(1,pq)}S_{pq}((hg)_{(2,pq)})\stackrel{(\ref{2e})}=\epsilon_p(h)\epsilon_q(g)1,$$

the rest are similar.

(3)
$$\aligned
&\delta(h_{(1,p)},h_{(2,p)},g)=((h_{(1,p)(1,p)}S_p(h_{(2,p)})_{(1,p^{-1})})g_{(1,q)})S_q(h_{(1,p)(2,p)}(S_{p}(h_{(2,p)})_{(2,p^{-1})}g_{(2,q)}))\\
&\stackrel{(\ref{2g})}=((h_{(1,p)(1,p)}S_p(h_{(2,p)(2,p)}))g_{(1,q)})S_q(h_{(1,p)(2,p)}(S_p(h_{(2,p)(1,p)})g_{(2,q)}))\\
&=((h_{(1,p)(1,p)}S_p(h_{(2,p)}))g_{(1,q)})S_q(h_{(1,p)(2,p)(1,p)}(S_p(h_{(1,p)(2,p)(2,p)})g_{(2,q)}))\\
&\stackrel{(\ref{2c})}=((h_{(1,p)}S_p(h_{(2,p)}))g_{(1,q)})S_q(g_{(2,q)})\\
&\stackrel{(\ref{2e})}=\epsilon_p(h)g_{(1,q)}S_q(g_{(2,q)})\stackrel{(\ref{2e})}=\epsilon_p(h)\epsilon_q(g)1,\\
\endaligned$$

the rest are similar.
\end{proof}

\section{The first fundamental theorem for $Q-$graded Hopf quasigroups}
\def\theequation{3.\arabic{equation}}
\setcounter{equation} {0} \hskip\parindent

\begin{definition}
Let $H=\{H_p\}_{p\in Q}$ be a (not necessary associative) $Q-$graded algebra and for any $p\in Q$, $H_p$ is a (not necessary coassociative) coalgebra $(H_p,\Delta_p,\epsilon_p)$ such that for any $p,q\in Q$, $m_{p,q},~\mu$ are maps of coalgebras, then

(1) A family $\beta=\{\beta_{p,q}:H_p\otimes H_q\rightarrow H_{pq}\otimes H_q,\beta_{p,q}(h\otimes g)=hg_{(1,q)}\otimes g_{(2,q)}\}_{p,q\in Q}$ of $k-$maps is called the right $Galois$ map of $H$.

(2) A family $\gamma=\{\gamma_{p,q}:H_p\otimes H_q\rightarrow H_p\otimes H_{pq},\gamma_{p,q}(h\otimes g)=h_{(1,p)}\otimes h_{(2,p)}g\}_{p,q\in Q}$ of $k-$maps is called the left $Galois$ map of $H$.

(3) A family $\phi=\{\phi_{p,q}:H_p\otimes H_q\rightarrow H_{pq}\otimes H_q\}_{p,q\in Q}$ of $k-$maps is almost left $H-$linear if
$$
\phi_{p,q}(h\otimes g)=(h\otimes 1)\phi_{e,q}(1\otimes g),~\forall p,q\in Q,~h\in H_p,~g\in H_q.
$$

(4) A family $\phi=\{\phi_{p,q}:H_p\otimes H_q\rightarrow H_{pq}\otimes H_q\}_{p,q\in Q}$ of $k-$maps is almost right $H-$colinear if
$$
\phi_{p,q}(h\otimes g)=(I\otimes \epsilon_q)\phi_{p,q}(h\otimes g_{(1,q)})\otimes g_{(2,q)},~\forall p,q\in Q,~h\in H_{p},~g\in H_q.
$$

(5) A family $\phi=\{\phi_{p,q}:H_p\otimes H_q\rightarrow H_{pq}\otimes H_q\}_{p,q\in Q}$ of $k-$maps is right $H-$colinear if
$$
\phi_{p,q}(h\otimes g_{(1,q)})\otimes g_{(2,q)}=(I\otimes \Delta_{q})\phi_{p,q}(h\otimes g),~\forall p,q\in Q,~h\in H_p,~g\in H_q.
$$

(6) A family $\phi=\{\phi_{p,q}:H_p\otimes H_q\rightarrow H_{p}\otimes H_{pq}\}_{p,q\in Q}$ of $k-$maps is almost right $H-$linear if
$$
\phi_{p,q}(h\otimes g)=\phi_{p,e}(h\otimes 1)(1\otimes g),~\forall p,q\in Q,~h\in H_p,~g\in H_q.
$$

(7) A family $\phi=\{\phi_{p,q}:H_p\otimes H_q\rightarrow H_{p}\otimes H_{pq}\}_{p,q\in Q}$ of $k-$maps is almost left $H-$colinear if
$$
 \phi_{p,q}(h\otimes g)=h_{(1,p)}\otimes (\epsilon_p\otimes I)\phi_{p,q}(h_{(2,p)}\otimes g),~\forall p,q\in Q,~h\in H_p,~g\in H_q.
$$

(8) A family $\phi=\{\phi_{p,q}:H_p\otimes H_q\rightarrow H_{p}\otimes H_{pq}\} _{p,q\in Q}$ of $k-$maps is left $H-$colinear if
$$
 h_{(1,p)}\otimes \phi_{p,q}(h_{(2,p)}\otimes g)=(\Delta_p\otimes I)\phi_{p,q}(h\otimes g),~\forall p,q\in Q,~h\in H_p,~g\in H_q.
$$

(9) A family $\phi=\{\phi_{p,q}:H_{p}\otimes H_q\rightarrow H_{pq^{-1}}\otimes H_q\}_{p,q\in Q}$ of $k-$maps is almost left $H-$linear if
$$
 \phi_{p,q}(h\otimes g)=(h\otimes 1)\phi_{e,q}(1\otimes g),~\forall p,q\in Q,~h\in H_{p},~g\in H_q.
$$

(10) A family $\phi=\{\phi_{p,q}:H_{p}\otimes H_q\rightarrow H_{pq^{-1}}\otimes H_q\}_{p,q\in Q}$ of $k-$maps is almost right $H-$colinear if
$$
\phi_{p,q}(h\otimes g)=(I\otimes \epsilon_q)\phi_{p,q}(h\otimes g_{(1,q)})\otimes g_{(2,q)},~\forall p,q\in Q,~h\in H_{p},~g\in H_q.
$$

(11) A family $\phi=\{\phi_{p,q}:H_{p}\otimes H_q\rightarrow H_{pq^{-1}}\otimes H_q\}_{p,q\in Q}$ of $k-$maps is right $H-$colinear if
$$
 \phi_{p,q}(h\otimes g_{(1,q)})\otimes g_{(2,q)}=(I\otimes \Delta_{q})\phi_{p,q}(h\otimes g),~\forall p,q\in Q,~h\in H_{p},~g\in H_q.
$$

(12) A family $\phi=\{\phi_{p,q}:H_p\otimes H_{q}\rightarrow H_{p}\otimes H_{p^{-1}q}\}_{p,q\in Q}$ of $k-$maps is almost right $H-$linear if
$$
\phi_{p,q}(h\otimes g)=\phi_{p,e}(h\otimes 1)(1\otimes g),\forall p,q\in Q,~h\in H_p,~g\in H_{q}.
$$

(13) A family $\phi=\{\phi_{p,q}:H_p\otimes H_{q}\rightarrow H_{p}\otimes H_{p^{-1}q}\}_{p,q\in Q}$ of $k-$maps is almost left $H-$colinear if
$$
\phi_{p,q}(h\otimes g)=h_{(1,p)}\otimes (\epsilon_p\otimes I)\phi_{p,q}(h_{(2,p)}\otimes g),~\forall p,q\in Q,~h\in H_p,~g\in H_{q}.
$$

(14) A family $(\phi=\{\phi_{p,q}:H_p\otimes H_{p}\rightarrow H_{p}\otimes H_{p^{-1}q}\} _{p,q\in Q}$of $k-$maps is left $H-$colinear if
$$
h_{(1,p)}\otimes \phi_{p,q}(h_{(2,p)}\otimes g)=(\Delta_p\otimes I)\phi_{p,q}(h\otimes g),~\forall p,q\in Q,~h\in H_p,~g\in H_{q}.
$$

\end{definition}

\begin{lemma}
If for any $p\in Q,$ $\Delta_p$ is a coassociative, then the notion of almost $H-$colinearity coincides with that of $H-$colinearity.
\end{lemma}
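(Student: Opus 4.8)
The plan is to prove that the two notions coincide by establishing each inclusion separately, and since the four almost-colinear/colinear pairs of Definition~3.1 (items~(4)/(5),~(7)/(8),~(10)/(11),~(13)/(14)) are mutual mirror images, I would carry out the argument in full only for the right $H$-colinear pair~(4)/(5) and indicate that the rest follow by symmetry. The direction \emph{$H$-colinear $\Rightarrow$ almost $H$-colinear} needs no hypothesis on the comultiplications: starting from the defining identity of~(5), which lives in $H_{pq}\otimes H_q\otimes H_q$, I would apply $I\otimes\epsilon_q\otimes I$ to both sides. On the right-hand side the counit axiom for $H_q$ contracts the middle factor back into $\phi_{p,q}(h\otimes g)$, while on the left-hand side it produces exactly $(I\otimes\epsilon_q)\phi_{p,q}(h\otimes g_{(1,q)})\otimes g_{(2,q)}$, so that~(5) collapses precisely to~(4).

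The substantive direction is \emph{almost $H$-colinear $\Rightarrow$ $H$-colinear}, and this is where coassociativity is used. The key device is to set $\rho_{p,q}:=(I\otimes\epsilon_q)\phi_{p,q}:H_p\otimes H_q\to H_{pq}$ and to read the almost-colinearity identity~(4) as the statement that $\phi_{p,q}$ is reconstructed from $\rho_{p,q}$ through $\phi_{p,q}(h\otimes g)=\rho_{p,q}(h\otimes g_{(1,q)})\otimes g_{(2,q)}$. Substituting this formula into both sides of the target identity~(5), the left-hand side $\phi_{p,q}(h\otimes g_{(1,q)})\otimes g_{(2,q)}$ expands to $\rho_{p,q}(h\otimes g_{(1,q)(1,q)})\otimes g_{(1,q)(2,q)}\otimes g_{(2,q)}$, which is the triple coproduct $(\Delta_q\otimes I)\Delta_q(g)$ with its first leg fed through $\rho_{p,q}(h\otimes -)$, whereas the right-hand side $(I\otimes\Delta_q)\phi_{p,q}(h\otimes g)$ expands to $\rho_{p,q}(h\otimes g_{(1,q)})\otimes g_{(2,q)(1,q)}\otimes g_{(2,q)(2,q)}$, which is $(I\otimes\Delta_q)\Delta_q(g)$ treated the same way.

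These two expressions agree exactly when $(\Delta_q\otimes I)\Delta_q=(I\otimes\Delta_q)\Delta_q$ on $H_q$, which is the assumed coassociativity of $\Delta_q$; feeding the common triple coproduct $g_{(1,q)}\otimes g_{(2,q)}\otimes g_{(3,q)}$ through $\rho_{p,q}(h\otimes -)$ in the first slot then forces the two sides of~(5) to coincide. The only genuine obstacle is bookkeeping: isolating the single application of coassociativity so that the whole equivalence reduces to the identity $g_{(1,q)(1,q)}\otimes g_{(1,q)(2,q)}\otimes g_{(2,q)}=g_{(1,q)}\otimes g_{(2,q)(1,q)}\otimes g_{(2,q)(2,q)}$, after which equality is immediate. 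For the left-handed pair~(7)/(8) I would run the symmetric argument with $\lambda_{p,q}:=(\epsilon_p\otimes I)\phi_{p,q}$ in place of $\rho_{p,q}$ and coassociativity of $\Delta_p$ in place of that of $\Delta_q$; the variants~(10)/(11) and~(13)/(14) differ only in the grading of the target spaces and are handled verbatim.
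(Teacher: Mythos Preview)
Your proposal is correct and follows essentially the same approach as the paper: both directions are handled by applying a counit to collapse colinearity to almost-colinearity, and by invoking coassociativity together with the almost-colinear formula (your $\rho_{p,q}$ is just a name for the expression $(I\otimes\epsilon_q)\phi_{p,q}$ that the paper manipulates directly) to rebuild colinearity. The only cosmetic difference is that the paper writes out the left-colinear pair (7)/(8) instead of the right-colinear pair (4)/(5), and does not introduce the shorthand $\rho_{p,q}$, but the computation is line-for-line the same.
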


\begin{proof}
Suppose a family $\phi=\{\phi_{p,q}:H_p\otimes H_q\rightarrow H_{p}\otimes H_{pq}\}_{p,q\in Q}$ of $k-$maps is left $H-$colinear,
$$\aligned
&\phi_{p,q}(h\otimes g)\\
&=(I\otimes \epsilon_p\otimes I)(\Delta_p\otimes I)\phi_{p,q}(h\otimes g)\\
&=(I\otimes \epsilon_p\otimes I)(h_{(1,p)}\otimes \phi_{p,q}(h_{(2,p)}\otimes g))\\
&=h_{(1,p)}\otimes (\epsilon_p\otimes I)\phi_{p,q}(h_{(2,p)}\otimes g),~\forall p,q\in Q,~h\in H_p,~g\in H_q.\\
\endaligned$$

therefore $\phi$ is almost left $H-$colinear. Similarly, suppose a family $\phi=\{\phi_{p,q}:H_p\otimes H_q\rightarrow H_{p}\otimes H_{pq}\}_{p,q\in Q}$ of $k-$maps is almost left $H-$colinear,
$$\begin{aligned}
&(\Delta_p\otimes I)\phi_{p,q}(h\otimes g)\\
&=(\Delta_p\otimes I)(h_{(1,p)}\otimes (\epsilon_p\otimes I)\phi(h_{(2,p)}\otimes g))\\
&=h_{(1,p)(1,p)}\otimes h_{(1,p)(2,p)}\otimes (\epsilon_p\otimes I)\phi(h_{(2,p)}\otimes g)\\
&=h_{(1,p)}\otimes h_{(2,p)(1,p)}\otimes (\epsilon_p\otimes I)\phi(h_{(2,p)(2,p)}\otimes g)\\
&=h_{(1,p)}\otimes \phi_{p,q}(h_{(2,p)}\otimes g),~\forall p,q\in Q,~h\in H_p,~g\in H_q,\\
\end{aligned}$$

therefore $\phi$ is left $H-$colinear.
Analogously we can get the other results.
\end{proof}

\begin{lemma}
Let $H$ be a $Q-$graded Hopf quasigroup, then the right Galois map $\beta$ is almost left $H-$linear and almost right $H-$colinear, the left Galois map $\gamma$ is almost right $H-$linear and almost left $H-$colinear.
\end{lemma}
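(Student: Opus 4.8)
The plan is to verify each of the four asserted properties by directly unwinding the defining formulas
$$\beta_{p,q}(h\otimes g)=hg_{(1,q)}\otimes g_{(2,q)},\qquad \gamma_{p,q}(h\otimes g)=h_{(1,p)}\otimes h_{(2,p)}g,$$
using only the tensor-product algebra structure $(a\otimes b)(c\otimes d)=ac\otimes bd$, the unit axiom $h1=h=1h$, and the counit axiom $x_{(1,p)}\epsilon_p(x_{(2,p)})=x=\epsilon_p(x_{(1,p)})x_{(2,p)}$. None of the four identities needs the antipode, and the colinearity ones need only the counit (not full coassociativity), so each reduces to a one-line computation; the only genuine care is in bookkeeping the grading subscripts and confirming that the degrees of the intermediate tensors match the stated codomains.

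First I would treat the two linearity statements, which use only the unit axiom. For almost left $H$-linearity of $\beta$ I compute the right-hand side $(h\otimes 1)\beta_{e,q}(1\otimes g)$: since $\beta_{e,q}(1\otimes g)=1g_{(1,q)}\otimes g_{(2,q)}=g_{(1,q)}\otimes g_{(2,q)}$ and $(h\otimes 1)(g_{(1,q)}\otimes g_{(2,q)})=hg_{(1,q)}\otimes g_{(2,q)}$, this equals $\beta_{p,q}(h\otimes g)$. The almost right $H$-linearity of $\gamma$ is dual: $\gamma_{p,e}(h\otimes 1)=h_{(1,p)}\otimes h_{(2,p)}1=h_{(1,p)}\otimes h_{(2,p)}$, and right-multiplying by $1\otimes g$ gives $h_{(1,p)}\otimes h_{(2,p)}g=\gamma_{p,q}(h\otimes g)$. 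I would note that the factor $1\in H_e$ is exactly what makes the grading in $H_{pq}\otimes H_q$ (respectively $H_p\otimes H_{pq}$) come out correctly.

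Next I would handle the two colinearity statements, which use only the counit axiom. For almost right $H$-colinearity of $\beta$, applying $\beta_{p,q}$ to $h\otimes g_{(1,q)}$ gives $h(g_{(1,q)})_{(1,q)}\otimes (g_{(1,q)})_{(2,q)}$; applying $I\otimes\epsilon_q$ and using the counit axiom on the single element $g_{(1,q)}$ collapses this to $hg_{(1,q)}$, so that tensoring with $g_{(2,q)}$ recovers $hg_{(1,q)}\otimes g_{(2,q)}=\beta_{p,q}(h\otimes g)$. The almost left $H$-colinearity of $\gamma$ runs symmetrically: $\gamma_{p,q}(h_{(2,p)}\otimes g)=(h_{(2,p)})_{(1,p)}\otimes (h_{(2,p)})_{(2,p)}g$, and applying $\epsilon_p\otimes I$ together with the counit axiom on $h_{(2,p)}$ yields $h_{(2,p)}g$, whence $h_{(1,p)}\otimes h_{(2,p)}g=\gamma_{p,q}(h\otimes g)$.

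Since every step is a single application of an axiom, there is no substantive obstacle: the proof is essentially bookkeeping. The one point I would double-check is that each counit is applied to a single coalgebra element (namely $g_{(1,q)}$ or $h_{(2,p)}$ viewed as a whole), so that coassociativity is never secretly invoked, and that all grading indices remain consistent from line to line.
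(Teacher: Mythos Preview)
Your proposal is correct and follows essentially the same approach as the paper's proof: a direct unwinding of the defining formulas for $\beta$ and $\gamma$, using only the unit axiom for the linearity statements and only the counit axiom for the colinearity statements. The paper carries out the $\beta$ computations in the same way (writing the colinearity step with coassociative Sweedler notation $g_{(1,q)}\epsilon_q(g_{(2,q)})\otimes g_{(3,q)}$, though as you observe only the counit is actually used) and then says ``similarly'' for $\gamma$.
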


\begin{proof}
Since
$$\beta_{p,q}(h\otimes g)=hg_{(1,p)}\otimes g_{(2,p)}=(h\otimes 1)(g_{(1,q)}\otimes g_{(2,q)})$$$$=(h\otimes 1)\beta_{e,q}(1\otimes g),~\forall p,q\in Q,~h\in H_p,~g\in H_q.$$

and
$$\beta_{p,q}(h\otimes g)=hg_{(1,q)}\otimes g_{(2,q)}=hg_{(1,q)}\epsilon_q(g_{(2,q)})\otimes g_{(3,q)}=(I\otimes \epsilon_q)(hg_{(1,q)}\otimes g_{(2,q)})\otimes g_{(3,q)}$$$$=(I\otimes \epsilon_q)\beta_{p,q}(h\otimes g_{(1,q)})\otimes g_{(2,q)},~\forall p,q\in Q,~h\in H_p,~g\in H_q,$$

$\beta$ is almost left $H-$linear and almost $H-$colinear. Similarly we can get the result for $\gamma$.

\end{proof}
\begin{theorem}
Let $H=\{H_p\}_{p\in Q}$ be a (not necessary associative) $Q-$graded algebra and for any $p\in Q$, $H_p$ is a (not necessary coassociative) coalgebra $(H_p,\Delta_p,\epsilon_p)$ such that for any $p,q\in Q$, $m_{p,q},~\mu$ are maps of coalgebras. Then $H$ is a $Q-$graded Hopf quasigroup $\Longleftrightarrow$

 (1) For any $p\in Q$, $\Delta_p$ is coassociative .

 (2) There is a family $\beta^{\star}=\{\beta^{\star}_{p,q}:H_p\otimes H_q\rightarrow H_{pq^{-1}}\otimes H_q\}_{p,q\in Q}$ of $k-$maps which is almost left $H-$linear and satisfy
$$\beta^{\star}_{pq,q}\beta_{p,q}(h\otimes g)=h\otimes g,~\forall p,q\in Q,~h\in H_p,~g\in H_q,$$
$$\beta_{pq^{-1},q}\beta^{\star}_{p,q}(h\otimes g)=h\otimes g,~\forall p,q\in Q,~h\in H_p,~g\in H_q.$$

(3) There is a family $\gamma^{\star}=\{\gamma^{\star}_{p,q}:H_p\otimes H_q\rightarrow H_p\otimes H_{p^{-1}q}\}_{p,q\in Q}$ of $k-$maps which is almost right $H-$linear and satisfy
$$\gamma^{\star}_{p,pq}\gamma_{p,q}(h\otimes g)=h\otimes g,~\forall p,q\in Q,~h\in H_p,~g\in H_q,$$
$$\gamma_{p,p^{-1}q}\gamma^{\star}_{p,q}(h\otimes g)=h\otimes g,~\forall p,q\in Q,~h\in H_p,~g\in H_q.$$
\end{theorem}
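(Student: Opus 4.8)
The plan is to prove both implications, and to build the antipode for $(\Leftarrow)$ directly out of the inverse Galois maps. The guiding formulas are, for the forward direction,
$$\beta^{\star}_{p,q}(h\otimes g)=hS_q(g_{(1,q)})\otimes g_{(2,q)},\qquad \gamma^{\star}_{p,q}(h\otimes g)=h_{(1,p)}\otimes S_p(h_{(2,p)})g,$$
and for the backward direction the antipode is recovered as $S_p^{\beta}(h)=(I\otimes\epsilon_p)\beta^{\star}_{e,p}(1\otimes h)$ and, a priori separately, as $S_p^{\gamma}(h)=(\epsilon_p\otimes I)\gamma^{\star}_{p,e}(h\otimes 1)$.

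$(\Rightarrow)$ Condition (1) is immediate, since each $H_p$ is by definition a coassociative coalgebra. Taking $\beta^{\star}$ as above, almost left $H$-linearity is the trivial identity $(h\otimes1)\beta^{\star}_{e,q}(1\otimes g)=hS_q(g_{(1,q)})\otimes g_{(2,q)}$, and the two inverse identities follow by regrouping the coproduct (coassociativity) and collapsing with the antipode axioms; for instance $\beta^{\star}_{pq,q}\beta_{p,q}(h\otimes g)=(hg_{(1,q)})S_q(g_{(2,q)})\otimes g_{(3,q)}=h\otimes g$ by the second equality of (\ref{2d}), and $\beta_{pq^{-1},q}\beta^{\star}_{p,q}(h\otimes g)=h\otimes g$ by the first. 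Part (3) is the mirror computation with $\gamma^{\star}$, now using (\ref{2c}). All of this is routine.

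$(\Leftarrow)$ This is the substance. First, using only coassociativity (Condition (1)) and the fact that $m$ is a coalgebra map, one checks that $\beta$ is genuinely right $H$-colinear and $\gamma$ genuinely left $H$-colinear. Since the two identities in (2) exhibit $\beta^{\star}_{pq,q}$ (after an index shift) as the two-sided inverse of the colinear isomorphism $\beta_{p,q}$, the inverse $\beta^{\star}$ is again right $H$-colinear; likewise $\gamma^{\star}$ is left $H$-colinear. Combining this colinearity with the assumed almost left $H$-linearity of $\beta^{\star}$ and applying the counit to the middle leg of the colinearity identity pins down the shape of the inverse maps, namely $\beta^{\star}_{p,q}(h\otimes g)=hS^{\beta}_q(g_{(1,q)})\otimes g_{(2,q)}$, and symmetrically $\gamma^{\star}_{p,q}(h\otimes g)=h_{(1,p)}\otimes S^{\gamma}_p(h_{(2,p)})g$. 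Feeding these back into the two inverse identities of (2) and applying the counit yields both equalities of (\ref{2d}) for $S^{\beta}$, while the identities of (3) yield both equalities of (\ref{2c}) for $S^{\gamma}$.

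The main obstacle is that, because $H$ is not associative, $S^{\beta}$ and $S^{\gamma}$ satisfy disjoint halves of the axioms, and the usual convolution-inverse uniqueness argument is unavailable to identify them. I would resolve this by evaluating the single element $S^{\gamma}_p(h_{(1,p)})\bigl(h_{(2,p)}S^{\beta}_p(h_{(3,p)})\bigr)$ in two ways, reassociating only the genuinely coassociative coproduct and never the product. Grouping the first two coproduct legs and invoking the first equality of (\ref{2c}) for $S^{\gamma}$ collapses it to $\epsilon_p(h_{(1,p)})S^{\beta}_p(h_{(2,p)})=S^{\beta}_p(h)$; grouping the last two legs and using the $g=1$ instance of the second equality of (\ref{2d}), namely $h_{(1,p)}S^{\beta}_p(h_{(2,p)})=\epsilon_p(h)1$, collapses it to $S^{\gamma}_p(h_{(1,p)})\epsilon_p(h_{(2,p)})=S^{\gamma}_p(h)$. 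Hence $S^{\beta}=S^{\gamma}=:S$, and this common family $S_p:H_p\to H_{p^{-1}}$ satisfies all of (\ref{2c}) and (\ref{2d}). Since the standing hypotheses already supply the unital $Q$-graded algebra, the coalgebras $H_p$, and the coalgebra maps $m,\mu$, this is precisely the data of a $Q$-graded Hopf quasigroup, completing the proof.
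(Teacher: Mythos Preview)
Your proposal is correct and follows essentially the same route as the paper: in the forward direction you take the same explicit formulas for $\beta^{\star}$ and $\gamma^{\star}$, and in the backward direction you extract two candidate antipodes $S^{\beta},S^{\gamma}$ from the inverse Galois maps via the counit, use colinearity (inherited from the invertibility of the colinear $\beta,\gamma$) together with almost linearity to pin down their shape, read off (\ref{2d}) and (\ref{2c}) respectively, and then identify the two by a single triple-product computation. The only cosmetic difference is the bracketing in that last step: the paper evaluates $(S^{\gamma}_p(h_{(1,p)})h_{(2,p)})S^{\beta}_p(h_{(3,p)})$ rather than your $S^{\gamma}_p(h_{(1,p)})(h_{(2,p)}S^{\beta}_p(h_{(3,p)}))$, but both collapse the same way and neither requires associativity of the product.
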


\begin{proof}
$(\Longrightarrow)$ $H$ is a $Q-$graded Hopf quasigroup, by definition, $\forall p\in Q,$ $\Delta_p$ is coassociative. We set
 $$\beta^{\star}=\{\beta^{\star}_{p,q}:H_{p}\otimes H_q\rightarrow H_{pq^{-1}}\otimes H_q,\beta^{\star}_{p,q}(h\otimes g)=hS_q(g_{(1,q)})\otimes g_{(2,q)}\}_{p,q\in Q}$$
 and
 $$\gamma^{\star}=\{\gamma^{\star}_{p,q}:H_p\otimes H_{q}\rightarrow H_p\otimes H_{p^{-1}q}, \gamma^{\star}_{p,q}(h\otimes g)=h_{(1,p)}\otimes S_p(h_{(2,p)})g\}_{p,q\in Q}.
$$

Since
$$\beta^{\star}_{p,q}(h\otimes g)=hS_q(g_{(1,q)})\otimes g_{(2,q)}=(h\otimes 1)(S_q(g_{(1,q)})\otimes g_{(2,q)})$$$$=(h\otimes 1)\beta^{\star}_{e,q}(1\otimes g),~\forall p,q\in Q,~h\in H_{pq},~g\in H_q,$$

$\beta^{-1}$ is almost left $H-$linear .

And we have
$$\beta^{\star}_{pq,q}\beta_{p,q}(h\otimes g)=\beta^{\star}_{pq,q}(hg_{(1,q)}\otimes g_{(2,q)})=(hg_{(1,q)})S_q(g_{(2,q)})\otimes g_{(3,q)}$$$$\stackrel{(\ref{2d})}=h\otimes g,~\forall p,q\in Q,~h\in H_p,~g\in H_q,$$
and
$$\beta_{pq^{-1},q}\beta^{\star}_{p,q}(h\otimes g)=\beta_{pq^{-1},q}(hS_q(g_{(1,q)})\otimes g_{(2,q)})=(hS_q(g_{(1,q)}))g_{(2,q)}\otimes g_{(3,q)}$$
$$\stackrel{(\ref{2d})}=h\otimes g,~\forall p,q\in
Q,~h\in H_p,~g\in H_q.$$

Analogous computation yields the result for $\gamma^{\star}$.

$(\Longleftarrow)$ We denote
$$\beta^{\star}_{e,q}(1\otimes g)\equiv g^{[1,q^{-1}]}\otimes g^{[2,q]},~\forall q\in Q,~g\in H_q,$$
$$\gamma^{\star}_{p,e}(h\otimes 1)\equiv h^{(1,p)}\otimes h^{(2,p^{-1})}, ~\forall p\in Q,~h\in H_p.$$

Since $\beta^{\star}$ and $\gamma^{\star}$ are almost $H-$linear, we have
$$\beta^{\star}_{p,q}(h\otimes g)=(h\otimes 1)\beta^{\star}_{e,q}(1\otimes g)=hg^{[1,q^{-1}]}\otimes g^{[2,q]},~\forall p,q\in Q,~h\in H_p,~g\in H_q,$$
$$\gamma^{\star}_{p,q}(h\otimes g)=\gamma^{\star}_{p,e}(h\otimes 1)(1\otimes g)=h^{(1,p)}\otimes h^{(2,p^{-1})}g,~\forall p,q\in Q,~h\in H_p,~g\in H_{q}.$$

Define two families of $k-$linear maps:
$$S=\{S_q:H_q\rightarrow H_{q^{-1}},S_q(g)=g^{[1,q^{-1}]}\epsilon_q(g^{[2,q]})\}_{q\in Q}$$
$$S^{\star}=\{S^{\star}_p:H_p\rightarrow H_{p^{-1}},S^{\star}_p(h)=\epsilon_p(h^{(1,p)})h^{(2,p^{-1})}\}_{p\in Q}.$$

Since for any $p\in Q,$ $\Delta_p$ is coassociative and $\beta$ is almost right $H-$colinear, by lemma 2.2, $\beta$ is right $H-$colinear, and hence $\beta^{-1}$ is right $H-$colinear. So
$$g_{(1,q)}^{[1,q^{-1}]}\otimes g_{(1,q)}^{[2,q]}\otimes g_{(2,q)}=\beta^{\star}_{e,q}(1\otimes g_{(1,q)})\otimes g_{(2,q)}=(I\otimes \Delta_q)\beta^{\star}_{e,q}(1\otimes g)=(I\otimes \Delta_q)(g^{[1,q^{-1}]}\otimes g^{[2,q]})$$
$$=g^{[1,q^{-1}]}\otimes g^{[2,q]}_{(1,q)}\otimes g^{[2,q]}_{(2,q)},~\forall q\in Q,~g\in H_q.$$

Applying $I\otimes \epsilon_q\otimes I$ to both sides of above equation, we have
$$g^{[1.q^{-1}]}\otimes g^{[2,q]}=g^{[1,q^{-1}]}\otimes \epsilon_q(g^{[2,q]}_{(1,q)})g^{[2,q]}_{(2,q)}=g^{[1,q^{-1}]}_{(1,q)}\epsilon_q(g^{[2,q]}_{(1,q)})\otimes g_{(2,q)}=S_q(g_{(1,q)})\otimes g_{(2,q)}.$$

So, we have
$$h\otimes g=\beta^{\star}_{pq,q}\beta_{p,q}(h\otimes g)=\beta^{\star}_{pq,q}(hg_{(1,q)}\otimes g_{(2,q)})
=(hg_{(1,q)}\otimes 1)\beta^{\star}_{e,q}(1\otimes g_{(2,q)})$$
$$=(hg_{(1,q)})S_q(g_{(2,q)})\otimes g_{(3,q)},~\forall p,q\in Q,~h\in H_{p},~g\in H_q,$$

and
$$h\otimes g=\beta_{pq^{-1},q}\beta^{\star}_{p,q}(h\otimes g)
=\beta_{pq^{-1},q}((h\otimes 1)\beta^{\star}_{e,q}(1\otimes g))
=\beta_{pq^{-1},q}(hS_q(g_{(1,q)})\otimes g_{(2,q)})$$
$$=(hS_q(g_{(1,q)}))g_{(2,q)}\otimes g_{(3,q)}~\forall p,q\in Q,~h\in H_{p},~g\in H_q,$$

so we get
$$(hg_{(1,q)})S_q(g_{(2,q)})\otimes g_{(3,q)}=h\otimes g=(hS_q(g_{(1,q)}))g_{(2,q)}\otimes g_{(3,q)}.$$

Applying $I\otimes \epsilon_q$ to the above equation, we have
$$(hg_{(1,q)})S_q(h_{(2,q)})=h\epsilon_q(g)=(hS_q(g_{(1,q)}))h_{(2,q)},~\forall p,q\in Q,~h\in H_p,~g\in H_q.$$

Following similar chain of arguments we can conclude that the the map $S^{\star}$ satisfy
$$S_p^{\star}(h_{(1,p)})(h_{(2,p)}g)=\epsilon_p(h)g=h_{(1,p)}(S_p^{\star}(h_{(2,p)})g),~\forall p,q\in Q,~h\in H_p,~g\in H_q.$$

Finally, we can compute
$$S_p^{\star}(h)=S_p^{\star}(h_{(1,p)})\epsilon_p(h_{(2,p)})=(S_p^{\star}(h_{(1,p)})h_{(2,p)})h_{(3,p)}$$
$$=\epsilon_p(h_{(1,p)})S_p(h_{(2,p)})=S_p(h),~\forall p\in Q,~h\in H_p.$$

Thus the maps $S$ and $S^{\star}$ coincide, $H$ is a $Q-$graded Hopf quasigroup.
\end{proof}

  \section{$Q-$graded quasimodules and $Q-$graded Hopf quasimodules}
  \def\theequation{4.\arabic{equation}}
  \setcounter{equation} {0} \hskip\parindent
 \begin{definition}
 Let $H$ be a $Q-$graded Hopf quasigroup , a $Q-$graded  $H-$quasimodule is a pair $(M,\varphi)$, where $M=\{M_q\}_{q\in Q}$ is a family of $k-$spaces and $\varphi=\{\varphi_{p,q}:H_p\otimes M_q\rightarrow M_{pq}\}_{p,q\in Q}$ ($Q$-graded action) is a family of $k-$maps such that

 $$1\cdot m=m,~\forall q\in Q,~m\in M_q,$$
\begin{equation}
 h_{(1,p)}\cdot (S_p(h_{(2,p)})\cdot m)=S_p(h_{(1,p)})\cdot (h_{(2,p)}\cdot m)=\epsilon_p(h)m,~\forall p,q\in Q,~h\in H_p,~m\in M_q. \label{4a}
\end{equation}
 A map of $Q-$graded  $H-$quasimodules is a family $f=\{f_q:M_q\rightarrow N_q\}_{q\in Q}$ of $k-$maps such that
 \begin{equation}
 f_q(h_{(1,p)}\cdot (S_p(h_{(2,p)})\cdot m))=h_{(1,p)}\cdot (S_p(h_{(2,p)})\cdot f_q(m)),~\forall p,q\in Q,~h\in H_p,~m\in M_q.\label{4b}
 \end{equation}
 \end{definition}

\begin{remark}
$H$ is a $Q-$graded Hopf quasimodule with the $Q-$graded multiplication as $Q-$graded action.
\end{remark}
 \begin{example}
 Let $Q$ be a quasigroup and $X=\{X_q\}_{q\in Q}$ is a set, then we can define a $Q-$graded  $kQ-$quasimodule on a family $kX=\{(kX_q)\}_{q\in Q}$ of $k-$spaces as follows:
 $$(k_1p)\cdot (k_2X_q)=k_1k_2X_{pq},~\forall p,q\in Q.$$
 \end{example}

 \begin{proposition}
 Suppose $M,$ N are two $Q-$graded  $H-$quasimodules, then $M\otimes N=\{M_q\otimes N_q\}_{q\in Q}$ is a $Q-$graded  $H-$quasimodule with the diagonal action:
 $$h\cdot (m\otimes n)=h_{(1,p)}\cdot m\otimes h_{(2,p)}\cdot n,~\forall p,q\in Q,~h\in H_p,~m\otimes n\in M_q\otimes N_q.$$
 \end{proposition}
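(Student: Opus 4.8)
The plan is to verify the three requirements of Definition 4.1 for the family $M\otimes N=\{M_q\otimes N_q\}_{q\in Q}$ equipped with the diagonal action $\varphi^{M\otimes N}_{p,q}(h\otimes(m\otimes n))=h_{(1,p)}\cdot m\otimes h_{(2,p)}\cdot n$. First I would check that this is a well-graded family of $k$-maps: since $\Delta_p(h)=h_{(1,p)}\otimes h_{(2,p)}\in H_p\otimes H_p$ while $m\otimes n\in M_q\otimes N_q$, the given actions on $M$ and $N$ yield $h_{(1,p)}\cdot m\in M_{pq}$ and $h_{(2,p)}\cdot n\in N_{pq}$, so the output lies in $M_{pq}\otimes N_{pq}$ as required. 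The unit axiom is immediate from $\Delta_e(1)=1\otimes 1$ (part (4) of the definition of a $Q$-graded Hopf quasigroup): $1\cdot(m\otimes n)=1\cdot m\otimes 1\cdot n=m\otimes n$. The real content is the pair of quasimodule identities (\ref{4a}).

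For the first identity I would compute $h_{(1,p)}\cdot(S_p(h_{(2,p)})\cdot(m\otimes n))$ from the inside out. Expanding the inner diagonal action and then using (\ref{2g}) to rewrite $\Delta_{p^{-1}}(S_p(h_{(2,p)}))=S_p(h_{(3,p)})\otimes S_p(h_{(2,p)})$ (in the iterated notation where $\Delta^{(2)}_p(h)=h_{(1,p)}\otimes h_{(2,p)}\otimes h_{(3,p)}$) gives $S_p(h_{(2,p)})\cdot(m\otimes n)=S_p(h_{(3,p)})\cdot m\otimes S_p(h_{(2,p)})\cdot n$. Applying the outer diagonal action and splitting $h_{(1,p)}$ once more produces, in the four-fold notation,
$$h_{(1,p)}\cdot(S_p(h_{(4,p)})\cdot m)\otimes h_{(2,p)}\cdot(S_p(h_{(3,p)})\cdot n).$$
The key observation is that the order reversal in (\ref{2g}) has arranged the legs so that the second tensorand carries the adjacent pair $(h_{(2,p)},h_{(3,p)})$ in exactly the bracketing of (\ref{4a}) for $N$. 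Contracting it by (\ref{4a}) replaces that factor by $\epsilon_p$ of the merged middle leg; after collapsing that leg by counit and coassociativity, the expression reduces to $h_{(1,p)}\cdot(S_p(h_{(2,p)})\cdot m)\otimes n$, and a second application of (\ref{4a}), now for $M$, yields $\epsilon_p(h)(m\otimes n)$.

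The second identity $S_p(h_{(1,p)})\cdot(h_{(2,p)}\cdot(m\otimes n))=\epsilon_p(h)(m\otimes n)$ is handled by the mirror-image computation: expand $h_{(2,p)}$ diagonally, use (\ref{2g}) in the form $\Delta_{p^{-1}}(S_p(h_{(1,p)}))=S_p(h_{(2,p)})\otimes S_p(h_{(1,p)})$, and again observe that the crossed legs line up so that (\ref{4a}) contracts one tensorand and then the other. The main obstacle is purely the Sweedler-index bookkeeping: because $H$ is not associative we may not re-bracket freely, so each contraction must match the precise parenthesization of (\ref{4a}), and it is exactly the order reversal supplied by (\ref{2g}) that guarantees the inner pair of legs to be contracted is the adjacent pair $(h_{(2,p)},h_{(3,p)})$ rather than a non-adjacent pair that no axiom could collapse. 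Keeping the iterated coproducts indexed consistently through the two nested diagonal actions is therefore the step requiring the most care.
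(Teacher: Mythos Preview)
Your proposal is correct and follows essentially the same route as the paper: expand the inner diagonal action, invoke (\ref{2g}) to flip the legs of $\Delta_{p^{-1}}S_p$, expand the outer diagonal action to reach $h_{(1,p)}\cdot(S_p(h_{(4,p)})\cdot m)\otimes h_{(2,p)}\cdot(S_p(h_{(3,p)})\cdot n)$, and then contract via (\ref{4a}). The paper compresses your two successive applications of (\ref{4a}) into a single step and omits the grading and unit checks, but the argument is the same.
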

 \begin{proof}
 $$\begin{aligned}
 &h_{(1,p)}\cdot (S_p(h_{(2,p)})\cdot (m\otimes n))\\
 &=h_{(1,p)}\cdot(S_p(h_{(2,p)})_{(1,p^{-1})}\cdot m\otimes S_p(h_{(2,p)})_{(2,p^{-1})}\cdot n)\\
 &\stackrel{(\ref{2g})}=h_{(1,p)}\cdot (S_p(h_{(3,p)})\cdot m\otimes S_p(h_{(2,p)})\cdot n)\\
 &=h_{(1,p)}\cdot (S_p(h_{(4,p)})\cdot m)\otimes h_{(2,p)}\cdot (S_p(h_{(3,p)})\cdot n)\\
 &\stackrel{(\ref{4a})}=\epsilon_p(h)(m\otimes n).\\
 \end{aligned}$$

 Similarly, we can proof
 $S_p(h_{(1,p)})\cdot (h_{(2,p)}\cdot (m\otimes n))=\epsilon_p(h)(m\otimes n),$ $M\otimes N$ with the diagonal action is a $Q-$graded  $H-$quasimodule.
 \end{proof}

\begin{definition}
Let $H$ be a $Q-$graded Hopf quasigroup, a $Q-$graded  $H-$Hopf quasimodule is a $Q-$graded  $H-$quasimodule $M=\{M_q,\varphi_{p,q}\}_{p,q\in Q}$ and $\forall q\in Q,$ $M_q$ is coassociative counitary left $H_q-$comodule $(M_q,\rho_q)$ such that (we denote $\rho_{q}(m)=m_{(-1,q)}\otimes m_{(0,q)}$)
\begin{equation}
(h\cdot m)_{(-1,pq)}\otimes (h\cdot m)_{(0,pq)}=h_{(1,p)}m_{(-1,q)}\otimes h_{(2,p)}\cdot m_{(0,q)}~\forall p,q\in Q,~h\in H_p,~m\in M_q. \label{4c}
\end{equation}

A map of $Q-$graded  $H-$Hopf quasimodules is a map of $Q-$graded  $H-$quasimodules $f=\{f_q:M_q\rightarrow N_q\}_{q\in Q}$ such that
\begin{equation}
m_{(-1,q)}\otimes f_q(m_{(0,q)})=f_q(m)_{(-1,q)}\otimes f_q(m)_{(0,q)},~\forall q\in Q,~m\in M_q. \label{4d}
\end{equation}
\end{definition}

\begin{example}
Let $Q$ be a quasigroup, we can define a $Q-$graded $kQ-$Hopf quasimodule on $Q-$graded $kQ-$quasimodule $kX=\{(kX_q)\}_{q\in Q}$ by
$$\rho_q(X_q)=q\otimes X_q,~\forall q\in Q.$$
\end{example}

\begin{lemma}
Let $M$ be a $Q-$graded  $H-Hopf$ quasimodule, then
\begin{equation}
 \rho_e(S_q(m_{(-1,q)})\cdot m_{(0,q)})=1\otimes S_q(m_{(-1,q)})\cdot m_{(0,q)},~\forall q\in Q,~m\in M_q. \label{4e}
\end{equation}
\begin{equation}
 \rho_q(h\cdot (S_q(m_{(-1,q)})\cdot m_{(0,q)}))=h_{(1,p)}\otimes h_{(2,p)}\cdot (S_q(m_{(-1,q)})\cdot m_{(0,q)}),~\forall p,q\in Q,~h\in H_p,~m\in M_q. \label{4f}
\end{equation}

\end{lemma}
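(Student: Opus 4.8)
The plan is to prove (\ref{4e}) first and then read off (\ref{4f}) almost immediately. Throughout, write $a=m_{(-1,q)}$ and $x=m_{(0,q)}$ for the two legs of $\rho_q(m)$, and set $w=S_q(m_{(-1,q)})\cdot m_{(0,q)}\in M_e$; the goal of (\ref{4e}) is exactly that $w$ is \emph{coinvariant}, i.e. $\rho_e(w)=1\otimes w$. Since $w$ is the action of $S_q(m_{(-1,q)})\in H_{q^{-1}}$ on $m_{(0,q)}\in M_q$, I would first push $\rho_e$ through this action using the Hopf quasimodule compatibility (\ref{4c}) (with $h=S_q(m_{(-1,q)})$), obtaining
$$\rho_e(w)=S_q(m_{(-1,q)})_{(1,q^{-1})}\,(m_{(0,q)})_{(-1,q)}\otimes S_q(m_{(-1,q)})_{(2,q^{-1})}\cdot (m_{(0,q)})_{(0,q)},$$
where the first tensor factor is a product in $H_{q^{-1}q}=H_e$ and the second is again an action into $M_e$.

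Next I would rewrite the coproduct of the antipode value by its anti-comultiplicativity (\ref{2g}),
$$S_q(m_{(-1,q)})_{(1,q^{-1})}\otimes S_q(m_{(-1,q)})_{(2,q^{-1})}=S_q(m_{(-1,q)(2,q)})\otimes S_q(m_{(-1,q)(1,q)}),$$
and then invoke coassociativity of the $H_q$-comodule $M_q$ to reconcile the two independent splittings now present: the coproduct $\Delta_q$ applied to the first leg $a=m_{(-1,q)}$ and the coaction $\rho_q$ applied to the second leg $x=m_{(0,q)}$. Coassociativity identifies $a_{(1)}\otimes a_{(2)}\otimes x_{(-1)}\otimes x_{(0)}$ with $a_{(1)}\otimes a_{(2)}\otimes a_{(3)}\otimes x$, i.e. the three $H_q$-factors all come from the double coproduct of the single leg $a$, while $x$ is untouched. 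After this identification the first factor is $S_q(a_{(2)})\,a_{(3)}$, which collapses to $\epsilon_q(\text{middle})\,1$ by the antipode identity (\ref{2e}); feeding the surviving counit back through the counit axiom of $H_q$ restores $S_q(a)\cdot x=w$ in the second factor, giving $\rho_e(w)=1\otimes w$, which is (\ref{4e}). Note that only bare products and a single action occur, so no flexibility or associativity hypothesis is needed.

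For (\ref{4f}) I would apply (\ref{4c}) to $h\cdot w$ with $w\in M_e$ (so the coaction on $h\cdot w\in M_p$ is $\rho_p$), giving $\rho_p(h\cdot w)=h_{(1,p)}\,w_{(-1,e)}\otimes h_{(2,p)}\cdot w_{(0,e)}$, and then substitute the already-proved (\ref{4e}) in the form $w_{(-1,e)}\otimes w_{(0,e)}=1\otimes w$. The unit axiom $h_{(1,p)}1=h_{(1,p)}$ then finishes the computation and yields $h_{(1,p)}\otimes h_{(2,p)}\cdot w$, as claimed.

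I expect the only genuine obstacle to be the index bookkeeping in the middle step of (\ref{4e}): one must be careful that, after applying (\ref{2g}), the splitting of $m_{(-1,q)}$ by $\Delta_q$ and the splitting of $m_{(0,q)}$ by the coaction are genuinely merged—via coassociativity of the comodule—into one iterated comultiplication of the single leg $m_{(-1,q)}$, so that (\ref{2e}) can be applied to the correct adjacent pair $S_q(a_{(2)})\,a_{(3)}$. Once that identification is in place, the collapse by (\ref{2e}) and the counit is routine.
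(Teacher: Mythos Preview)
Your proposal is correct and follows essentially the same route as the paper: apply the Hopf-quasimodule compatibility (\ref{4c}) to push $\rho_e$ through the action, use the anti-comultiplicativity (\ref{2g}) of the antipode, invoke coassociativity of the $H_q$-comodule $M_q$ to align the legs, and then collapse via (\ref{2e}) and the counit; (\ref{4f}) is then the immediate consequence of (\ref{4c}) together with the coinvariance (\ref{4e}) just proved. Your remark that the coaction in (\ref{4f}) should really be $\rho_p$ (since $h\cdot w\in M_p$) is also well taken, matching the shape of the right-hand side.
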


\begin{proof}
$$\aligned
&\rho_e(S_q(m_{(-1,q)})\cdot m_{(0,q)})\\
&=(S_q(m_{(-1,q)})\cdot m_{(0,q)})_{(-1,e)}\otimes (S_q(m_{(-1,q)})\cdot m_{(0,q)})_{(0,e)}\\
&\stackrel{(\ref{4c})}=(S_q(m_{(-2,q)}))_{(1,q^{-1})}m_{(-1,q)}\otimes (S_q(m_{(-2,q)}))_{(2,q^{-1})}\cdot m_{(0,q)}\\
&\stackrel{(\ref{2g})}=S_q(m_{(-2,q)})m_{(-1,q)}\otimes S_q(m_{(-3,q)})\cdot m_{(0,q)}\\
&\stackrel{(\ref{2e})}=\epsilon_q(m_{(-1,q)})1\otimes S_q(m_{(-2,q)})\cdot m_{(0,q)}\\
&=1\otimes S_q(m_{(-1,q)})\cdot m_{(0,q)},\\
\endaligned$$

$$\aligned
&\rho_q(h\cdot (S_q(m_{(-1,q)})\cdot m_{(0,q)}))\\
&\stackrel{(\ref{4c})}=h_{(1,p)}(S_q(m_{(-1,q)})\cdot m_{(0,q)})_{(-1,q)}\otimes h_{(2,p)}\cdot (S_q(m_{(-1,q)})\cdot m_{(0,q)})_{(0,q)}\\
&\stackrel{(\ref{4e})}=h_{(1,p)}\otimes h_{(2,p)}\cdot (S_q(m_{(-1,q)})\cdot m_{(0,q)}).\\
\endaligned$$
\end{proof}

\begin{theorem}
Let $(M,\varphi,\rho)$ be a $Q-$graded  $H-$Hopf quasimodule, we replace the $Q-$graded action $\varphi$ by
$\vartriangleright=\{\vartriangleright_{p,q}:H_p\otimes M_q\rightarrow M_{pq},\vartriangleright_{p,q}(h\otimes m)=(hm_{(-2,q)})\cdot (S_q(m_{(-1,q)})\cdot m_{(0,q)})\}_{p,q\in Q} $ (we denote $\vartriangleright_{p,q}(h\otimes m)\equiv h\vartriangleright m$), then $(M,\vartriangleright,\rho)$ is a $Q-$graded  $H-$Hopf quasimodule.
\end{theorem}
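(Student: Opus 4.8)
The plan is to reduce the twisted action $\vartriangleright$ to a \emph{free} action on elements of a special shape, using the coinvariance produced by the preceding lemma. For $m\in M_q$ write $P_q(m):=S_q(m_{(-1,q)})\cdot m_{(0,q)}\in M_e$. By (\ref{4e}) this element is coinvariant, $\rho_e(P_q(m))=1\otimes P_q(m)$. Regrouping the two negative legs of $\rho_q$ by coassociativity turns the defining formula into $h\vartriangleright m=(h\,m_{(-1,q)})\cdot P_q(m_{(0,q)})$, and applying (\ref{4a}) to $m_{(-2,q)}\cdot(S_q(m_{(-1,q)})\cdot m_{(0,q)})$ yields the identity $m=m_{(-1,q)}\cdot P_q(m_{(0,q)})$. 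Thus every element is of the form $a\cdot u$ with $a\in H$ and $u$ coinvariant, and $\vartriangleright$ is assembled from such pieces.

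The technical heart is a sublemma: for any $c\in H_t$, $a\in H_s$ and any coinvariant $u\in M_e$ one has $c\vartriangleright(a\cdot u)=(ca)\cdot u$. To prove it, observe first that coinvariance of $u$ together with (\ref{4c}) gives the iterated coaction $a_{(1,s)}\otimes a_{(2,s)}\otimes a_{(3,s)}\cdot u$ for $a\cdot u$. Inserting this into the definition of $\vartriangleright$ produces $c\vartriangleright(a\cdot u)=(c\,a_{(1,s)})\cdot\big(S_s(a_{(2,s)})\cdot(a_{(3,s)}\cdot u)\big)$; now (\ref{4a}) collapses the inner factor to $\epsilon_s(a_{(2,s)})u$, and the counit leaves $(ca)\cdot u$.

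With the sublemma in hand the remaining axioms are immediate. Writing $S_p(h_{(2,p)})\vartriangleright m=(S_p(h_{(2,p)})m_{(-1,q)})\cdot P_q(m_{(0,q)})$ and applying the sublemma with $c=h_{(1,p)}$, $a=S_p(h_{(2,p)})m_{(-1,q)}$, $u=P_q(m_{(0,q)})$ gives $h_{(1,p)}\vartriangleright(S_p(h_{(2,p)})\vartriangleright m)=\big(h_{(1,p)}(S_p(h_{(2,p)})m_{(-1,q)})\big)\cdot P_q(m_{(0,q)})$, which reduces by (\ref{2c}) to $\epsilon_p(h)\,m_{(-1,q)}\cdot P_q(m_{(0,q)})=\epsilon_p(h)m$; the symmetric identity $S_p(h_{(1,p)})\vartriangleright(h_{(2,p)}\vartriangleright m)=\epsilon_p(h)m$ follows in the same way from the other half of (\ref{2c}), and $1\vartriangleright m=m$ is the case $h=1$. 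For the comodule compatibility (\ref{4c}) of $\vartriangleright$ I compute $\rho_{pq}(h\vartriangleright m)=\rho_{pq}((h\,m_{(-1,q)})\cdot P_q(m_{(0,q)}))$; coinvariance of $P_q(m_{(0,q)})$ together with the fact that $\Delta$ is an algebra map turns this into $h_{(1,p)}m_{(-2,q)}\otimes(h_{(2,p)}m_{(-1,q)})\cdot P_q(m_{(0,q)})$, which is exactly $h_{(1,p)}m_{(-1,q)}\otimes h_{(2,p)}\vartriangleright m_{(0,q)}$ after regrouping.

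The step where care is essential is the quasimodule axiom. A direct, unstructured expansion of $h_{(1,p)}\vartriangleright(S_p(h_{(2,p)})\vartriangleright m)$ forces one to apply the antipode to the already-antipoded leg, producing iterated antipodes $S_{p^{-1}}S_p$ through (\ref{2f}) and (\ref{2g}); these do \emph{not} simplify in general (one has $S_{p^{-1}}S_p=\mathrm{id}$ only under (co)commutativity). The plan avoids this entirely: rather than simplifying those antipodes individually, the coinvariance of $P_q(m_{(0,q)})$ lets one collapse each inner antipode--action pair directly via (\ref{4a}) and thereby reduce the whole computation to a single application of (\ref{2c}).
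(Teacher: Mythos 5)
Your proposal is correct and takes essentially the same route as the paper: your sublemma $c\vartriangleright(a\cdot u)=(ca)\cdot u$ for coinvariant $u$ is exactly the paper's inline computation of $h\vartriangleright(g\vartriangleright m)$, which likewise rests on the coinvariance statement (\ref{4e})/(\ref{4f}) and collapses the inner antipode--action pair by (\ref{4a}). The finishing steps also coincide — (\ref{2c}) yields the quasimodule axiom after the substitution $c=h_{(1,p)}$, $a=S_p(h_{(2,p)})m_{(-1,q)}$, and (\ref{2a}) together with coinvariance gives the compatibility condition (\ref{4c}).
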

\begin{proof}

Obviously,
$$\begin{aligned}
&1\vartriangleright m=m_{(-2,q)}\cdot (S_q(m_{(-1,q)})\cdot m_{(0,q)})
=\epsilon_q(m_{(-1,q)})m_{(0,q)}=m,~\forall q\in Q,~m\in M_q.\\
\end{aligned}$$

Verifying the conditon $(\ref{4a}): $ince
$$g\vartriangleright m=(gm_{(-2,r)})\cdot (S_r(m_{(-1,r)})\cdot m_{(0,r)}),~\forall q,r\in Q,~g\in H_q,~r\in M_r,$$

we have
$$\rho(g\vartriangleright m)\stackrel{(\ref{4f})}=(gm_{(-2,r)})_{(1,qr)}\otimes (gm_{(-2,r)})_{(2,qr)}\cdot (S_r(m_{(-1,r)})m_{(0,r)}),$$

and
$$(g\triangleright m)_{(-2,qr)}\otimes (g\triangleright m)_{(-1,qr)}\otimes (g\triangleright m)_{(0,qr)}$$
$$=(gm_{(-2,r)})_{(1,qr)}\otimes (gm_{(-2,r)})_{(2,qr)}\otimes (gm_{(-2,r)})_{(3,qr)}\cdot (S_r(m_{(-1,r)})\cdot m_{(0,r)}),$$

using the above equation,
$$\begin{aligned}
&h\triangleright(g\triangleright m)\\
&=(h(g\triangleright m)_{(-2,qr)})\cdot (S_{qr}((g\vartriangleright m)_{(-1,qr)})\cdot (g\vartriangleright m)_{(0,qr)})\\
&=(h(gm_{(-2,r)})_{(1,qr)})\cdot (S_{qr}((gm_{(-2,r)})_{(2,qr)})\cdot ((gm_{(-2,r)})_{(3,qr)}\cdot (S_r(m_{(-1,r)})\cdot m_{(0,r)})))\\
&=(h(gm_{(-2,r)}))\cdot (S_r(m_{(-1,r)})\cdot m_{(0,r)}),~\forall p,q,r\in Q,~h\in H_p,~g\in H_q,~m\in M_r,\\
\end{aligned}$$

replacing $h$ with $S_p(h_{(1,p)})$ and $g$ with $h_{(2,p)},$

$$\begin{aligned}
&S_p(h_{(1,p)})\vartriangleright (h_{(2,p)}\vartriangleright m)\\
&=(S_p(h_{(1,p)})(h_{(2,p)}m_{(-2,q)}))\cdot (S_q(m_{(-1,q)})\cdot m_{(0,q)})\\
&=\epsilon_p(h)m_{(-2,q)}\cdot (S_q(m_{(-1,q)})\cdot m_{(0,q)})\\
&=\epsilon_p(h)m,~\forall p,q\in Q,~h\in H_p,~m\in M_q.\\
\end{aligned}$$

Similary, we can get
$$h_{(1,p)}\vartriangleright (S_p(h_{(2,p)})\vartriangleright m)=\epsilon_p(h)m~,\forall p,q\in Q,~h\in H_p,~m\in M_q.$$

So, $(M,\varphi)$ is a $Q-$graded  $H-$quasimodule. Now we verify the condition $(\ref{4c})$ as follows:

$$\begin{aligned}
&\rho(h\vartriangleright m)=\rho((hm_{(-2,q)})\cdot (S_q(m_{(-1,q)})\cdot m_{(0,q)}))\\
&\stackrel{(\ref{4f})}=(hm_{(-2,q)})_{(1,pq)}\otimes (hm_{(-2,q)})_{(2,pq)}\cdot (S_q(m_{(-1,q)})\cdot m_{(0,q)})\\
&\stackrel{(\ref{2a})}=h_{(1,p)}m_{(-3,q)}\otimes (h_{(2,p)}m_{(-2,q)})\cdot (S_q(m_{(-1,q)})\cdot m_{(0,q)})\\
&=h_{(1,p)}m_{(-1,q)}\otimes h_{(2,p)}\vartriangleright m_{(0,q)}.\\
\end{aligned}$$

\end{proof}

\begin{definition}
Let $M$ be a $Q-$graded  $H-$Hopf quasimodule, the coinvariant of $M$ on $H$ is a vector space $$M^{coH}=\{m\in M_e\| \rho_e(m)=1\otimes m\}.$$
\end{definition}

\begin{lemma}
Let $M=\{M_q\}_{q\in Q}$ be a $Q-$graded  $H-$Hopf quasimodule, then $H\otimes M^{coH}=\{H_q\otimes M^{coH}\}_{q\in Q}$ is a $Q-$graded  $H-$Hopf quasimodule.
\end{lemma}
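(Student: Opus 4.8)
The plan is to equip $H\otimes M^{coH}$ with the ``free'' Hopf quasimodule structure induced entirely by $H$, treating $M^{coH}$ as a passive multiplicity space. Concretely, for $g\in H_p$ and $h\otimes m\in H_q\otimes M^{coH}=(H\otimes M^{coH})_q$ I would define the $Q$-graded action by left multiplication on the first tensorand,
$$g\cdot(h\otimes m)=gh\otimes m\in H_{pq}\otimes M^{coH},$$
and the left $H_q$-coaction by comultiplication on the first tensorand,
$$\rho_q(h\otimes m)=h_{(1,q)}\otimes(h_{(2,q)}\otimes m)\in H_q\otimes(H_q\otimes M^{coH}).$$
Note that $m\in M^{coH}$ never interacts with the structure maps, so the coinvariance condition $\rho_e(m)=1\otimes m$ plays no role in this verification; it will only matter later, for the fundamental theorem comparing $M$ with $H\otimes M^{coH}$.

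First I would check the quasimodule axioms of Definition~4.1. The unit axiom is immediate from $1h=h$. For the antipode relations \eqref{4a}, unwinding the definition gives $g_{(1,p)}\cdot(S_p(g_{(2,p)})\cdot(h\otimes m))=g_{(1,p)}(S_p(g_{(2,p)})h)\otimes m$ and $S_p(g_{(1,p)})\cdot(g_{(2,p)}\cdot(h\otimes m))=S_p(g_{(1,p)})(g_{(2,p)}h)\otimes m$; both collapse to $\epsilon_p(g)\,h\otimes m$ by the defining relations \eqref{2c}. This is the only point where non-associativity could matter, and it is handled precisely because \eqref{2c} is the correct weak substitute for associativity.

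Next I would verify that each $H_q\otimes M^{coH}$ is a coassociative counitary left $H_q$-comodule: coassociativity of $\rho_q$ is inherited directly from coassociativity of $\Delta_q$, and counitarity from $\epsilon_q(h_{(1,q)})h_{(2,q)}=h$. Finally, for the compatibility condition \eqref{4c}, I would compute both sides. The left-hand side $\rho_{pq}(gh\otimes m)=(gh)_{(1,pq)}\otimes\bigl((gh)_{(2,pq)}\otimes m\bigr)$ becomes $g_{(1,p)}h_{(1,q)}\otimes(g_{(2,p)}h_{(2,q)}\otimes m)$ after applying the coalgebra-map property \eqref{2a} of the multiplication, while the right-hand side $g_{(1,p)}(h\otimes m)_{(-1,q)}\otimes g_{(2,p)}\cdot(h\otimes m)_{(0,q)}$ expands, via the definitions of $\rho_q$ and $\cdot$, to the same expression $g_{(1,p)}h_{(1,q)}\otimes(g_{(2,p)}h_{(2,q)}\otimes m)$. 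Since the two sides coincide, \eqref{4c} holds and $H\otimes M^{coH}$ is a $Q$-graded $H$-Hopf quasimodule.

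I expect no serious obstacle here: every axiom reduces to an identity already available in $H$ (the antipode relations \eqref{2c}, coassociativity of $\Delta_q$, and the coalgebra-map property \eqref{2a}), and associativity of $H$ is never invoked. The only point demanding genuine care is the bookkeeping of the grading indices, ensuring that $g\in H_p$ acting on $(H\otimes M^{coH})_q$ lands in degree $pq$ and that $\rho_q$ has the correct target $H_q\otimes(H\otimes M^{coH})_q$; once these are tracked correctly, the verification is routine.
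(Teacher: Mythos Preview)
Your proposal is correct and follows essentially the same approach as the paper: the same action $g\cdot(h\otimes m)=gh\otimes m$ and coaction $\rho_q(h\otimes m)=h_{(1,q)}\otimes h_{(2,q)}\otimes m$, with the quasimodule axioms reduced to \eqref{2c}, the comodule axioms to coassociativity and counitarity of $\Delta_q$, and compatibility \eqref{4c} to the coalgebra-map property \eqref{2a}. Your remark that the coinvariance of $m$ plays no role here is accurate and is an observation the paper does not make explicit.
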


\begin{proof}
Defining the $Q-$graded  $H-$module action $\varphi$ on $H\otimes M^{coH}$:
$$h\cdot(g\otimes m)=hg\otimes m,~\forall p,q\in Q,~h\in H_p,~g\otimes m\in H_q\otimes M^{coH}.$$

Since
$$1\cdot (g\otimes m)=g\otimes m,~\forall q\in Q,~g\otimes  m\in H_q\otimes M^{coH},$$

and

$$\begin{aligned}
&S_p(h_{(1,p)})\cdot (h_{(2,p)}\cdot (g\otimes m))\\
&=S_p(h_{(1,p)})(h_{(2,p)}g)\otimes m\\
&\stackrel{(\ref{2c})}=\epsilon_p(h)(g\otimes m)\\
&\stackrel{(\ref{2c})}=h_{(1,p)}(S_p(h_{(2,p)})g)\otimes m\\
&=h_{(1,p)}\cdot (S_p(h_{(2,p)})\cdot (g\otimes m)),\\
&~\forall p,q\in Q,~h\in H_p,~g\otimes m\in H_q\otimes M^{coH},\\
\end{aligned}$$

$(H\otimes M^{coH},\varphi)$ is a $Q-$graded  $H-$quasimodule.

For any $q\in Q,$ we define $H_q-$comodule coaction on $H_q\otimes M^{coH}$:
$$\rho_q(g\otimes m)=g_{(1,q)}\otimes g_{(2,q)}\otimes m,~\forall q\in Q,~g\otimes m\in H_q\otimes M^{coH}.$$

Since
$$\aligned
&(\Delta_q\otimes I\otimes I)(\rho_q(g\otimes m))\\
&=(\Delta_q\otimes I\otimes I)(g_{(1,q)}\otimes g_{(2,q)}\otimes m)\\
&=g_{(1,q)}\otimes g_{(2,q)}\otimes g_{(3,q)}\otimes m\\
&=(I\otimes \rho_q)(g_{(1,q)}\otimes g_{(2,q)}\otimes m)\\
&=(I\otimes \rho_q)\rho_q(g\otimes m),~
\forall q\in Q,~g\otimes m\in H_q\otimes M^{coH},\\
\endaligned$$

and
$$\aligned
&(\epsilon_q\otimes I\otimes I)\rho_q(g\otimes m)=(\epsilon_q\otimes I\otimes I)(g_{(1,q)}\otimes g_{(2,q)}\otimes m)=g\otimes m,~\forall q\in Q,~g\otimes m\in H_q\otimes M^{coH},\\
\endaligned$$

$(H_q\otimes M^{coH},\rho_q)$ is a coassociative counitary  $H_q-$comodule.

Now we verify the condition $(\ref{4c})$ for $H\otimes M^{coH},$
$$\aligned
&(h\cdot (g\otimes m))_{(-1,pq)}\otimes (h\cdot (g\otimes m))_{(0,pq)}\\
&=(hg\otimes m)_{(-1,pq)}\otimes (hg\otimes m)_{(0,pq)}\\
&=(hg)_{(-1,pq)}\otimes (hg)_{(2,pq)}\otimes m\\
&\stackrel{(\ref{2a})}=h_{(1,p)}g_{(1,p)}\otimes h_{(2,p)}g_{(2,p)}\otimes m\\
&=h_{(1,p)}(g\otimes m)_{(-1,q)}\otimes h_{(2,p)}\cdot (g\otimes m)_{(0,q)},\\
&~\forall p,q\in Q,~h\in H_p,~g\otimes m\in H_q\otimes M^{coH}.\\
\endaligned$$

So, $H\otimes M^{coH}$ is a $Q-$graded  $H-$Hopf quasimodule.
\end{proof}

\begin{theorem}
Let $M=\{M_q\}_{q\in Q}$ be a $Q-$graded  $H-$Hopf quasimodule, then (as $Q-$graded  $H-$Hopf quasimodule)
 $$H\otimes M^{coH}\cong M.$$
\end{theorem}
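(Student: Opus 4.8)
The plan is to prove the isomorphism by exhibiting mutually inverse maps of $Q$-graded $H$-Hopf quasimodules between $H \otimes M^{coH}$ and $M$, following the classical fundamental-theorem strategy. First I would define the candidate map $\Phi = \{\Phi_q : H_q \otimes M^{coH} \to M_q\}_{q\in Q}$ by $\Phi_q(g \otimes m) = g \cdot m$, where the action is the original $\varphi$ (note $m \in M^{coH} \subseteq M_e$, so $g \cdot m \in M_{qe} = M_q$). For the inverse I would define $\Psi = \{\Psi_q : M_q \to H_q \otimes M^{coH}\}_{q \in Q}$ by $\Psi_q(m) = m_{(-1,q)} \otimes (S_q(m_{(-2,q)}) \cdot m_{(0,q)})$, using the comodule coaction; here Lemma 4.13, equation (\ref{4e}), guarantees that the second tensor factor $S_q(m_{(-1,q)}) \cdot m_{(0,q)}$ genuinely lands in $M^{coH}$, since $\rho_e$ applied to it returns $1 \otimes (\,\cdot\,)$.

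The key verification steps, in order, would be: (i) check $\Phi$ and $\Psi$ are well-defined, in particular that $\Psi_q$ has image in $H_q \otimes M^{coH}$ (this is exactly where (\ref{4e}) is used); (ii) check that $\Phi$ is a map of $Q$-graded $H$-Hopf quasimodules, i.e.\ it is both $\varphi$-linear (using the module action $h \cdot (g \otimes m) = hg \otimes m$ from Lemma 4.15) and $\rho$-colinear, where colinearity follows from the Hopf-quasimodule axiom (\ref{4c}) together with the definition $\rho_q(g \otimes m) = g_{(1,q)} \otimes g_{(2,q)} \otimes m$; (iii) compute $\Phi_q \circ \Psi_q = \mathrm{id}_{M_q}$, which unwinds to $m_{(-1,q)} \cdot (S_q(m_{(-2,q)}) \cdot m_{(0,q)})$ and collapses to $m$ after invoking coassociativity of $\rho_q$ and the quasimodule axiom (\ref{4a}) in the form $h_{(1,p)} \cdot (S_p(h_{(2,p)}) \cdot x) = \epsilon_p(h) x$; (iv) compute $\Psi_q \circ \Phi_q = \mathrm{id}_{H_q \otimes M^{coH}}$, which unwinds via (\ref{4c}) to $(g_{(1,q)} m_{(-1,e)}) \otimes \big(S_q((g_{(2,q)} m_{(-2,e)}) )\cdot (g_{(3,q)} \cdot m_{(0,e)})\big)$, and then collapses using $\rho_e(m) = 1 \otimes m$ for $m \in M^{coH}$ (so all $m_{(-i,e)}$ factors become $1$) together with the antipode axiom (\ref{2c}).

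For step (iii) the crucial simplification is that after applying coassociativity the expression becomes $m_{(-1,q)(1,q)} \cdot (S_q(m_{(-1,q)(2,q)}) \cdot m_{(0,q)})$, and the quasimodule identity (\ref{4a}) — valid precisely because each $M_q$ is a quasimodule, not merely a comodule over an associative structure — reduces the scalar $H$-factor to a counit, returning $m$. For step (iv), once the coinvariance $\rho_e(m) = 1 \otimes m$ trivializes the comodule legs coming from $m$, the remaining manipulation is $g_{(1,q)} \otimes (S_q(g_{(2,q)}) \cdot (g_{(3,q)} \cdot m))$, which by coassociativity of $\Delta_q$ and (\ref{4a}) collapses to $g \otimes m$.

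The main obstacle I anticipate is step (iii), the computation $\Phi \circ \Psi = \mathrm{id}$. The danger is that the associativity one would reflexively use in the classical Hopf-algebra proof is \emph{not} available here: $H$ is only a Hopf quasigroup and $M$ only a quasimodule, so one cannot freely rebracket $m_{(-1,q)} \cdot (S_q(m_{(-2,q)}) \cdot m_{(0,q)})$ into $(m_{(-1,q)} S_q(m_{(-2,q)})) \cdot m_{(0,q)}$. The whole point is that the quasimodule axiom (\ref{4a}) is engineered to handle exactly this non-associative cancellation directly, so the proof must be organized to apply (\ref{4a}) to the correctly-bracketed expression rather than passing through an illegitimate intermediate product; keeping the parentheses honest throughout is the genuinely delicate part of the argument.
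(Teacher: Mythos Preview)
Your overall strategy is exactly the paper's: define $\Phi_q(g\otimes m)=g\cdot m$ and an explicit inverse $\Psi$, then verify the four items you list. However, the Sweedler indices in your $\Psi$ are reversed. With the convention $(\Delta_q\otimes I)\rho_q(m)=m_{(-2,q)}\otimes m_{(-1,q)}\otimes m_{(0,q)}$, equation~(\ref{4e}) asserts that $S_q(m_{(-1,q)})\cdot m_{(0,q)}\in M^{coH}$; thus the coinvariant factor must carry $S_q$ on the \emph{inner} leg $m_{(-1,q)}$, while the outer leg $m_{(-2,q)}$ goes into the $H_q$-slot. The correct map (the paper's) is
\[
\Psi_q(m)=m_{(-2,q)}\otimes S_q(m_{(-1,q)})\cdot m_{(0,q)}.
\]

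The swap is not cosmetic. With your $\Psi$, the tensor factor $S_q(m_{(-2,q)})\cdot m_{(0,q)}$ (with $m_{(-1,q)}$ in the other slot) is not of the form covered by (\ref{4e}), so step~(i) fails: the image need not lie in $H_q\otimes M^{coH}$. In step~(iii) your composite is
\[
m_{(-1,q)}\cdot\bigl(S_q(m_{(-2,q)})\cdot m_{(0,q)}\bigr)=m_{(-1,q)(2,q)}\cdot\bigl(S_q(m_{(-1,q)(1,q)})\cdot m_{(0,q)}\bigr),
\]
i.e.\ of the form $h_{(2)}\cdot(S(h_{(1)})\cdot x)$, which matches \emph{neither} pattern in (\ref{4a}); the line you wrote, $m_{(-1,q)(1,q)}\cdot(S_q(m_{(-1,q)(2,q)})\cdot m_{(0,q)})$, is what you get from the corrected $\Psi$, not from yours. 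Step~(iv) likewise only collapses with the correct ordering. Once the indices are fixed, your steps (i)--(iv) go through verbatim and coincide with the paper's argument. One minor additional remark: in (ii) you assert full $\varphi$-linearity of $\Phi$, i.e.\ $(hg)\cdot m=h\cdot(g\cdot m)$, which is \emph{not} available for quasimodules; fortunately the morphism condition actually required, namely (\ref{4b}), is automatic from (\ref{4a}), so this does not damage the proof.
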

\begin{proof}
For any $q\in Q,$ we define
$$\sigma_q:H_q\otimes M^{coH}\rightarrow M_q,g\otimes m\rightarrow g\cdot m,~\forall q\in Q,~g\otimes M\in H_q\otimes M^{coH}.$$

Verify $\sigma=\{\sigma_q\}_{q\in Q}$ satisfy the condition (\ref{4b}):
$$\aligned
&\sigma_q(h_{(1,p)}\cdot (S_p(h_{(2,p)})\cdot (g\otimes m)))\\
&=\sigma_q(h_{(1,p)}(S_p(h_{(2,p)})g)\otimes m)\\
&\stackrel{(\ref{4a})}=\sigma_q(\epsilon_p(h)(g\otimes m))\\
&=\epsilon_p(h)g\cdot m\\
&\stackrel{(\ref{4a})}=h_{(1,p)}\cdot (S_p(h_{(2.p)})\cdot (g\cdot m))\\
&=h_{(1,p)}\cdot (S_p(h_{(2,p)})\cdot \sigma_q(g\otimes m)),\\
&\forall p,q\in Q,~h\in H_p,~g\otimes m\in H_q\otimes M^{coH}.\\
\endaligned$$

Verify $\sigma=\{\sigma_q\}_{q\in Q}$ sastisfy the condtion (\ref{4d})
$$\aligned
&(g\otimes m)_{(-1,q)}\otimes \sigma_q((g\otimes m)_{(0,q)})\\
&=g_{(1,q)}\otimes \sigma_q(g_{(2,q)}\otimes m)\\
&=g_{(1,q)}\otimes g_{(2,q)}\cdot m\\
&\stackrel{(\ref{4c})}=(g\cdot m)_{(-1,q)}\otimes (g\cdot m)_{(0,q)}\\
&=\sigma_q(g\otimes m)_{(-1,q)}\otimes \sigma_q(g\otimes m)_{(0,q)},\\
&\forall q\in Q,~g\otimes m\in H_q\otimes M^{coH}.\\
\endaligned$$

So, $\sigma$ is a map of $Q-$graded  $H-$quasimodules.

For any $q\in Q,$ we define
$$\sigma_q^{-1}:M_q\rightarrow H_q\otimes M^{coH},~m\rightarrow m_{(-2,q)}\otimes S_q(m_{(-1,q)}\cdot m_{(0,q)}),$$

noticing by (\ref{4e}), $\sigma_q^{-1}(m)\subset H_q\otimes M^{coH},~\forall q\in Q,~m\in M_q,$ so $\sigma^{-1}$ is wel defined.

Since
$$\aligned
&\sigma_q(\sigma_q^{-1}(m))\\
&=m_{(-2,q)}\cdot (S_q(m_{(-1,q)})\cdot m_{(0,q)})\\
&\stackrel{(\ref{4a})}=\epsilon_q(m_{(-1,q)})m_{(0,q)}\\
&=m,~\forall q\in Q,~m\in M_q,\\
\endaligned$$

and

$$\aligned
&\sigma_{q^{-1}}\sigma_q(g\otimes m)\\
&=(g\cdot m)_{(-2,q)}\otimes S_q((g\cdot m)_{(-1,q)})\cdot (g\cdot m)_{(0,q)}\\
&\stackrel{(\ref{4c})}=g_{(1,q)}m_{(-2,q)}\otimes S_q(g_{(2,q)}m_{(-1,q)})\cdot (g_{(3,q)}\cdot m_{(0,q)})\\
&=g_{(1,q)}\otimes S_q(g_{(2,q)})\cdot (g_{(3,q)}\cdot m)\\
&\stackrel{(\ref{4a})}=g\otimes m,~\forall q\in Q,~g\otimes m\in H_q\otimes M^{coH},\\
\endaligned$$

$\sigma $ is an isomorphism of $Q-$graded  $H-$Hopf quasimodules, then $M\cong H\otimes M^{coH}$.

\end{proof}

\section{$Q-$graded Long dimodules}
\def\theequation{5.\arabic{equation}}
\setcounter{equation} {0} \hskip\parindent

\begin{definition}
Let $H$ be a $Q-$graded Hopf quasigroups, a $Q-$graded Long $H-$dimodule is a triple $(M,\varphi,\rho),$ where $(M,\varphi)$ is a $Q-$graded left $H-$quasimodule, $(M,\rho)=\{(M_q,\rho_{q,e}):\rho_{q,e}:M_q\rightarrow M_q\otimes H_e,\rho_{q,e}(m)\equiv m_{(o,q)\otimes m_{(1,e)}}\}_{q\in Q}$ is a family of coassociative counitary right $H_e-$comodules such that
\begin{equation}
(h\cdot m)_{(0,pq)}\otimes (h\cdot m)_{(1,e)}=h\cdot m_{(0,q)}\otimes m_{(1,e)},~\forall p,q\in Q,~h\in H_p,~m\in M_q. \label{5a}
\end{equation}
\end{definition}

\begin{example}
Let $Q$ be a quasigroup , we can define a $Q-$graded Long $kQ-$dimodule on $Q-$graded $kQ-$quasimodule $kX$ by
$$\rho_{q,e}(X_q)=X_q\otimes e,~\forall q\in Q.$$
\end{example}

\begin{lemma}
Let $(M,\varphi,\rho)$ be a $Q-$graded Long $H-$dimodule, then we have
\begin{equation}
\rho_{q,e}(S_e(m_{(1,e)})\cdot m_{(0,q)})=S_e(m_{(1,e)(2,e)})\cdot m_{(0,q)}\otimes m_{(1,e)(1,e)},~\forall q\in Q,~m\in M_q, \label{5b}
\end{equation}

and
\begin{equation}
\rho_{pq,e}(h\cdot (S_e(m_{(1,e)})\cdot m_{(0,q)}))=h\cdot (S_e(m_{(1,e)(2,e)})\cdot m_{(0,q)})\otimes m_{(1,e)(1,e)},~\forall p,q\in Q,~h\in H_p,~m\in M_q. \label{5c}
\end{equation}
\end{lemma}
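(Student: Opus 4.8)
The plan is to establish \eqref{5b} first and then obtain \eqref{5c} as a one-line consequence. The essential structural inputs are the Long-dimodule compatibility $(\ref{5a})$, which says that the coaction passes through the action untouched on the $H_e$-leg, namely $\rho_{pq,e}(h\cdot m)=h\cdot m_{(0,q)}\otimes m_{(1,e)}$, together with the coassociativity of the right $H_e$-comodule $M_q$ that is assumed in the definition. No flexibility, Moufang, or invertibility hypotheses on $H$ will be needed.

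For $(\ref{5b})$, I first note that since $e^{-1}=e$ and $eq=q$, the element $S_e(m_{(1,e)})\cdot m_{(0,q)}$ lies in $M_q$, so $\rho_{q,e}$ may legitimately be applied to it. I would then apply $(\ref{5a})$ with the acting element $h:=S_e(m_{(1,e)})\in H_e$ and the module element $m_{(0,q)}\in M_q$; this is valid termwise in the Sweedler sum because $(\ref{5a})$ is a linear identity. The result is $S_e(m_{(1,e)})\cdot (m_{(0,q)})_{(0,q)}\otimes (m_{(0,q)})_{(1,e)}$, i.e. the coaction has been pushed onto the inner $M_q$-factor. The next step is to untangle the nested coactions via coassociativity: recording the copy of $m_{(1,e)}$ carried by the antipode as a formal third leg, the iterated coaction $(\rho_{q,e}\otimes I)\rho_{q,e}(m)=(I\otimes\Delta_e)\rho_{q,e}(m)$ rewrites $(m_{(0,q)})_{(0,q)}\otimes (m_{(0,q)})_{(1,e)}\otimes m_{(1,e)}$ as $m_{(0,q)}\otimes m_{(1,e)(1,e)}\otimes m_{(1,e)(2,e)}$. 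Reading off that the antipode acts on the third leg and the second leg survives as the output $H_e$-component then yields $S_e(m_{(1,e)(2,e)})\cdot m_{(0,q)}\otimes m_{(1,e)(1,e)}$, which is exactly the right-hand side of $(\ref{5b})$.

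Deriving $(\ref{5c})$ is then immediate. The element $h\cdot(S_e(m_{(1,e)})\cdot m_{(0,q)})$ has $h\in H_p$ acting on $y:=S_e(m_{(1,e)})\cdot m_{(0,q)}\in M_q$, so one more application of $(\ref{5a})$ gives $\rho_{pq,e}(h\cdot y)=h\cdot y_{(0,q)}\otimes y_{(1,e)}=h\cdot\rho_{q,e}(y)$. Substituting the formula for $\rho_{q,e}(y)$ just obtained in $(\ref{5b})$ produces $h\cdot(S_e(m_{(1,e)(2,e)})\cdot m_{(0,q)})\otimes m_{(1,e)(1,e)}$, as required.

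The only delicate point I anticipate is the Sweedler-index bookkeeping in the coassociativity step of $(\ref{5b})$: one must track carefully that it is the original first-coaction $H_e$-component that carries the antipode, becoming $m_{(1,e)(2,e)}$, while the component produced by the second coaction survives as the output leg, becoming $m_{(1,e)(1,e)}$. Everything else is direct substitution.
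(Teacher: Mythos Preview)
Your proposal is correct and follows essentially the same route as the paper: apply the Long compatibility $(\ref{5a})$ to push the coaction past $S_e(m_{(1,e)})$, invoke coassociativity of the $H_e$-comodule to reindex, and then derive $(\ref{5c})$ from $(\ref{5b})$ by one further use of $(\ref{5a})$. Your remark about the Sweedler bookkeeping matches the paper's computation exactly.
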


\begin{proof}

$$\begin{aligned}
&\rho_{q,e}(S_e(m_{(1,e)})\cdot m_{(0,q)})\\&
=(S_e(m_{(1,e)})\cdot m_{(0,q)})_{(0,q)} \otimes (S_e(m_{(1,e)})\cdot m_{(0,q)})_{(1,e)}\\
&\stackrel{(\ref{5a})}=S_e(m_{(2,e)})\cdot m_{(0,q)}\otimes m_{(1,e)}\\
&=S_e(m_{(1,e)(2,e)})\cdot m_{(0,q)}\otimes m_{(1,e)(1,e)},\\
\end{aligned}$$
and
$$\begin{aligned}
&\rho_{pq,e}(h\cdot (S_e(m_{(1,e)})\cdot m_{(0,q)}))\\
&=(h\cdot (S_e(m_{(1,e)})\cdot m_{(0,q)}))_{(0,pq)}\otimes (h\cdot (S_e(m_{(1,e)})\cdot m_{(0,q)}))_{(1,e)}\\
&\stackrel{(\ref{5a})}=h\cdot (S_e(m_{(1,e)})\cdot m_{(0,q)})_{(0,q)}\otimes (S_e(m_{(1,e)})\cdot m_{(0,q)})_{(1,e)}\\
&\stackrel{(\ref{5b})}=h\cdot S_e(m_{(1,e)(2,e)})\cdot m_{(0,q)}\otimes m_{(1,e)(1,e)}.\\
\end{aligned}$$
\end{proof}

\begin{proposition}
Let $M$ be a $Q-$graded $H-$quasimodule, then $M\otimes H_e=\{M_q\otimes H_e\}_{q\in Q}$ is a $Q-$graded Long $H-$dimodule by :
 $$h\cdot (m\otimes g)=(h\cdot m)\otimes g,~\forall p,q\in Q,~h\in H_p,~m\in M_q,g\in H_e, $$
 $$\rho_{q,e}(m\otimes g)=m\otimes g_{(1,e)}\otimes g_{(2,e)},~\forall q\in Q,~m\in M_q,~g\in H_e.$$
\end{proposition}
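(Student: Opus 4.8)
The plan is to check, one by one, the three requirements in Definition 5.1: that $(M\otimes H_e,\varphi)$ is a $Q$-graded left $H$-quasimodule, that each $(M_q\otimes H_e,\rho_{q,e})$ is a coassociative counitary right $H_e$-comodule, and that the compatibility (\ref{5a}) holds. The guiding observation, which makes every verification immediate, is that the action $h\cdot(m\otimes g)=(h\cdot m)\otimes g$ touches only the $M$-factor while the coaction $\rho_{q,e}(m\otimes g)=m\otimes g_{(1,e)}\otimes g_{(2,e)}$ touches only the $H_e$-factor, so the two structures decouple completely.

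First I would dispense with the quasimodule axioms. Because the action is the identity on the right tensorand, the unit axiom reads $1\cdot(m\otimes g)=(1\cdot m)\otimes g=m\otimes g$, and both identities in (\ref{4a}) become the corresponding quasimodule identities for $M$ carried along with $g$; for instance $S_p(h_{(1,p)})\cdot(h_{(2,p)}\cdot(m\otimes g))=(S_p(h_{(1,p)})\cdot(h_{(2,p)}\cdot m))\otimes g=\epsilon_p(h)(m\otimes g)$, using that $(M,\varphi)$ is already a $Q$-graded $H$-quasimodule, and symmetrically for the bracketing $h_{(1,p)}\cdot(S_p(h_{(2,p)})\cdot(m\otimes g))$. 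Hence $(M\otimes H_e,\varphi)$ is a $Q$-graded left $H$-quasimodule.

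Second I would treat the comodule axioms. Since $\rho_{q,e}$ is nothing but $\Delta_e$ applied to the $H_e$-factor (with $M_q$ carried unchanged), coassociativity $(\rho_{q,e}\otimes I)\rho_{q,e}=(I\otimes\Delta_e)\rho_{q,e}$ reduces to coassociativity of $\Delta_e$, and counitarity $(I\otimes\epsilon_e)\rho_{q,e}=I$ reduces to $g_{(1,e)}\epsilon_e(g_{(2,e)})=g$; both hold because $H_e$ is a coassociative counitary coalgebra. Thus each $(M_q\otimes H_e,\rho_{q,e})$ is a coassociative counitary right $H_e$-comodule.

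Finally I would verify (\ref{5a}) by a direct computation. The left-hand side is obtained by applying $\rho_{pq,e}$ to $h\cdot(m\otimes g)=(h\cdot m)\otimes g$, giving $((h\cdot m)\otimes g_{(1,e)})\otimes g_{(2,e)}$; the right-hand side is $h\cdot(m\otimes g_{(1,e)})\otimes g_{(2,e)}=((h\cdot m)\otimes g_{(1,e)})\otimes g_{(2,e)}$, and the two coincide. The only point requiring attention — rather than a genuine obstacle — is the bookkeeping of the grading subscripts $pq$ versus $q$ and of the ``$0$''- and ``$1$''-components of $\rho$; once one notes that the action and coaction operate on disjoint tensor factors, no Hopf-quasigroup axiom beyond these elementary facts is actually needed, which is why the whole statement follows at once.
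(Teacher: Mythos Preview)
Your proposal is correct and follows essentially the same approach as the paper's own proof: verify the quasimodule axioms using that the action lives only on the $M$-factor, note that the $H_e$-comodule axioms reduce to coassociativity and counitarity of $\Delta_e$, and check (\ref{5a}) by the same direct computation. The paper is slightly terser but the logic is identical.
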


\begin{proof}
Since
$$1\cdot (m\otimes g)=m\otimes g,$$

and
$$\aligned
&S_p(h_{(1,p)})\cdot(h_{(2,p)}\cdot (m\otimes g))\\
&=S_p(h_{(1,p)})\cdot(h_{(2,p)}\cdot m)\otimes g\\
&=\epsilon_p(h)(m\otimes g)\\
&=h_{(1,p)}\cdot (S_p(h_{(2,p)})\cdot m)\otimes g\\
&=h_{(1,p)}\cdot (S_p(h_{(2,p)}\cdot (m\otimes g)),\\
\endaligned$$

therefore $M\otimes H_e$ is a $Q-$graded $H-$quasimodule. By the coassociativity of $\Delta_e$ and the counitality of $\epsilon_e$ , it is easy to check $M\otimes H_e$ is a right $H_e-$comodule. Now we proof the condition $(\ref{5a})$ as follows:
$$\aligned
&\rho_{pq,e}(h\cdot (m\otimes g)\\
&=\rho_{pq,e}(h\cdot m\otimes g)\\
&=(h\cdot m\otimes g)_{(0,pq)}\otimes (h\cdot m\otimes g)_{(1,e)}\\
&=h\cdot m\otimes g_{(1,e)}\otimes g_{(2,e)}\\
&=h\cdot (m\otimes g)_{(0,q)}\otimes (m\otimes g)_{(1,e)},
\endaligned$$

so, $M\otimes H_e$ is a $Q-$graded Long $H-$dimodule.

\end{proof}

\begin{proposition}
Let $H$ be a $Q-$graded Hopf quasigroup and $M$ is a right $H_e-$comodule, then $H\otimes M=\{H_q\otimes M\}_{q\in Q}$ is a $Q-$graded Long $H-$dimodule by
$$h\cdot (g\otimes m)=hg\otimes m,~\forall p,q\in Q,~h\in H_p,~g\in H_q,~m\in M.$$
$$\rho(g\otimes m)=g\otimes m_0\otimes m_1,~\forall q\in Q,~g\in H_q,~m\in M.$$
\end{proposition}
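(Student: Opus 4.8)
The plan is to verify that the triple $(H\otimes M,\varphi,\rho)$ meets the three requirements of Definition~5.1: first that $(H\otimes M,\varphi)$ is a $Q$-graded left $H$-quasimodule, second that each $H_q\otimes M$ is a coassociative counitary right $H_e$-comodule under $\rho$, and third that the Long compatibility~(\ref{5a}) holds. The whole argument runs parallel to the preceding $M\otimes H_e$ construction and to the $H\otimes M^{coH}$ lemma of Section~4, so every computation is short and relies only on the antipode relations, never on associativity of the product on $H$.

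First I would check the quasimodule axioms. The unit condition is immediate, since $1\cdot(g\otimes m)=1g\otimes m=g\otimes m$. For the condition~(\ref{4a}) I compute
$$h_{(1,p)}\cdot(S_p(h_{(2,p)})\cdot(g\otimes m))=h_{(1,p)}(S_p(h_{(2,p)})g)\otimes m\stackrel{(\ref{2c})}{=}\epsilon_p(h)(g\otimes m),$$
and symmetrically $S_p(h_{(1,p)})\cdot(h_{(2,p)}\cdot(g\otimes m))=\epsilon_p(h)(g\otimes m)$, again by~(\ref{2c}). This establishes that $(H\otimes M,\varphi)$ is a $Q$-graded left $H$-quasimodule.

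Next I would treat the comodule structure. Since $\rho(g\otimes m)=g\otimes m_0\otimes m_1$ is nothing but $\mathrm{id}_{H_q}\otimes\rho_M$, coassociativity and counitality of $\rho$ transfer directly from the right $H_e$-comodule structure on $M$: tensoring the comodule axioms for $M$ on the left by $H_q$ yields exactly the required identities. For the Long compatibility~(\ref{5a}) I would then expand both sides on an element $g\otimes m\in H_q\otimes M$. On the left, $\rho(hg\otimes m)=hg\otimes m_0\otimes m_1$ gives $(h\cdot(g\otimes m))_{(0,pq)}\otimes(h\cdot(g\otimes m))_{(1,e)}=hg\otimes m_0\otimes m_1$, while on the right $h\cdot(g\otimes m)_{(0,q)}\otimes(g\otimes m)_{(1,e)}=h\cdot(g\otimes m_0)\otimes m_1=hg\otimes m_0\otimes m_1$; the two sides coincide, so~(\ref{5a}) holds and $H\otimes M$ is a $Q$-graded Long $H$-dimodule.

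I do not expect a genuine obstacle here: the statement is a verification, and each condition collapses to a one-line computation. The only point demanding care is to invoke solely the antipode relations~(\ref{2c}) when checking~(\ref{4a}) and to avoid silently using any (nonexistent) associativity of $H$; because the $M$-factor is inert under both the action and the coaction, the grading bookkeeping and the Long axiom reduce to the trivial behaviour on the $M$-slot together with the comodule axioms inherited from $M$.
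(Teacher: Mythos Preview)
Your proposal is correct and follows essentially the same approach as the paper: verify the quasimodule axioms via~(\ref{2c}), note that the $H_e$-comodule structure is inherited from $M$ by tensoring with $\mathrm{id}_{H_q}$, and check~(\ref{5a}) by a direct two-line expansion. The paper's proof is slightly terser (it omits the unit check and leaves the comodule verification to the reader), but the logical content is identical.
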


\begin{proof}
Since
$$\begin{aligned}
&h_{(1,p)}\cdot (S_p(h_{(2,p)})\cdot (g\otimes m))\\&
=h_{(1,p)}\cdot (S_p(h_{(2,p)})\cdot g)\otimes m\\
&=\epsilon_p(h)(g\otimes m)\\
&=S_p(h_{(1,p)})\cdot (h_{(2,p)}\cdot g)\otimes m)\\\
&=S_p(h_{(1,p)})\cdot (h_{(2,p)}\cdot (g\otimes m)),\\
\end{aligned}$$

$H\otimes M$ is a $Q-$graded left $H-$quasimodule.

Verify $H\otimes M$ is a right $H_e-$comodule is easy and be left to reader, we verify condtion $(\ref{5a})$ as follows:
$$\begin{aligned}
&(h\cdot (g\otimes m))_{(0,pq)}\otimes (h\cdot (g\otimes m))_{(1,e)}\\
&=(hg\otimes m)_{(0,pq)}\otimes (hg\otimes m)_{(1,e)}\\
&=hg\otimes m_0\otimes m_1\\
&=h\cdot (g\otimes m_0)\otimes m_1\\
&=h\cdot (g\otimes m)_{(0,q)}\otimes (g\otimes m)_{(1,e)}.\\
\end{aligned}$$

Then, $H\otimes M$ is a $Q-$graded Long $H-$dimodule.

\end{proof}

\begin{corollary}
Let $(M,\varphi)$ be a $Q-$graded left $H-$quasimodule, then $(M,\varphi,\rho)$, then $M$ is a $Q-$graded Long $H-$dimodule by $$\rho_{q,e}(m)=m\otimes 1.$$
\end{corollary}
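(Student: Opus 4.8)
The plan is to verify the three defining requirements of a $Q$-graded Long $H$-dimodule for the triple $(M,\varphi,\rho)$ with the trivial coaction $\rho_{q,e}(m)=m\otimes 1$. Because $(M,\varphi)$ is assumed to be a $Q$-graded left $H$-quasimodule, the quasimodule axioms $1\cdot m=m$ and (\ref{4a}) hold by hypothesis, so the task reduces to showing that each $\rho_{q,e}$ makes $M_q$ a coassociative counitary right $H_e$-comodule, and then checking the compatibility identity (\ref{5a}).

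For the comodule structure I would invoke the unit axiom of a $Q$-graded Hopf quasigroup, namely $\Delta_e(1)=1\otimes 1$ and $\epsilon_e(1)=1_k$. Coassociativity holds since both $(\rho_{q,e}\otimes I)\rho_{q,e}(m)$ and $(I\otimes\Delta_e)\rho_{q,e}(m)$ equal $m\otimes 1\otimes 1$, the latter using $\Delta_e(1)=1\otimes 1$; counitarity holds since $(I\otimes\epsilon_e)\rho_{q,e}(m)=m\,\epsilon_e(1)=m$ by $\epsilon_e(1)=1_k$.

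Finally, for (\ref{5a}) the trivial coaction gives $m_{(0,q)}=m$ and $m_{(1,e)}=1$ for every $m\in M_q$, and since $h\cdot m\in M_{pq}$ one has $\rho_{pq,e}(h\cdot m)=(h\cdot m)\otimes 1$; thus both sides of (\ref{5a}) collapse to $h\cdot m\otimes 1$ and agree. I expect no genuine obstacle, as every step is a direct substitution; the only point requiring a little care is the grading bookkeeping, i.e. noticing that $h\cdot m$ lives in $M_{pq}$ so that the map applied on the left-hand side of (\ref{5a}) is $\rho_{pq,e}$ rather than $\rho_{q,e}$, after which both sides reduce to the same tensor. In short, the corollary records that the construction of Proposition (the preceding one, with $M\otimes H_e$) specializes, under the trivial coaction, to turn any $Q$-graded left $H$-quasimodule into a $Q$-graded Long $H$-dimodule.
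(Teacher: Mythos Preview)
Your verification is correct: the paper states this corollary without any proof, so your direct check of the comodule axioms (using $\Delta_e(1)=1\otimes 1$ and $\epsilon_e(1)=1_k$) together with the compatibility condition~(\ref{5a}) is exactly the routine argument the reader is expected to supply. One minor remark: your closing sentence describing the result as a ``specialization'' of the $M\otimes H_e$ construction is not literally how the corollary follows from Proposition~5.4, since the trivial coaction on $M$ is not obtained by restricting that proposition; the statement is a corollary only in the informal sense that the same elementary computations work, and your standalone verification is the cleanest justification.
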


\begin{proposition}
Suppose $(M,\varphi_M,\rho_M)$  and $(N,\varphi_N,\rho_N)$ are two $Q-$graded Long $H-$dimodules, then $\{M_q\otimes N_q\}_{q\in Q}$ is a $Q-$graded Long $H-$dimodule by
$$h\cdot (m\otimes n)=h_{(1,p)}\cdot m\otimes h_{(2,p)}\cdot n,~\forall p,q\in Q,~h\in H_p,~m\otimes n\in M_q\otimes N_q,$$
$$\rho_{q,e}(m\otimes n)=m_{(0,q)}\otimes n_{(0,q)}\otimes m_{(1,e)}n_{(1,e)},~\forall q\in Q,~m\otimes n\in M_q\otimes N_q.$$
\end{proposition}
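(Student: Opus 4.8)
The plan is to verify, one at a time, the three ingredients demanded by Definition~5.1 for the family $M\otimes N=\{M_q\otimes N_q\}_{q\in Q}$: that it is a $Q$-graded left $H$-quasimodule under the stated diagonal action, that each $M_q\otimes N_q$ is a coassociative counitary right $H_e$-comodule under the codiagonal coaction, and finally that the compatibility identity (\ref{5a}) holds. The first ingredient is free: the diagonal action written in the statement is exactly the one treated earlier, where it was shown that the tensor product of two $Q$-graded $H$-quasimodules is again a $Q$-graded $H$-quasimodule. I would simply invoke that result and spend no further effort on the quasimodule axioms.

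For the comodule structure I would check counitality and coassociativity of $\rho_{q,e}(m\otimes n)=m_{(0,q)}\otimes n_{(0,q)}\otimes m_{(1,e)}n_{(1,e)}$. Counitality is immediate: applying $I\otimes\epsilon_e$ and using that $\epsilon_e$ is multiplicative, i.e. (\ref{2b}) specialized to $p=q=e$, factors $\epsilon_e(m_{(1,e)}n_{(1,e)})$ as $\epsilon_e(m_{(1,e)})\epsilon_e(n_{(1,e)})$, after which the counit axioms of the comodules $M_q$ and $N_q$ return $m\otimes n$. Coassociativity is the step requiring care: iterating $\rho_{q,e}$ on one side and applying $I\otimes\Delta_e$ on the other, the two sides are matched by using that $\Delta_e$ is an algebra map, i.e. (\ref{2a}) specialized to $p=q=e$, to rewrite $\Delta_e(m_{(1,e)}n_{(1,e)})$ as $m_{(1,e)(1,e)}n_{(1,e)(1,e)}\otimes m_{(1,e)(2,e)}n_{(1,e)(2,e)}$, together with the individual coassociativities of $\rho_M$ and $\rho_N$ to realign the Sweedler legs.

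For the compatibility condition I would start from $h\cdot(m\otimes n)=h_{(1,p)}\cdot m\otimes h_{(2,p)}\cdot n$, apply $\rho_{pq,e}$, and then invoke (\ref{5a}) separately on the $M$-factor $h_{(1,p)}\cdot m$ and on the $N$-factor $h_{(2,p)}\cdot n$. Each application pulls the respective $H_p$-action through the corresponding coaction, leaving $h_{(1,p)}\cdot m_{(0,q)}\otimes h_{(2,p)}\cdot n_{(0,q)}\otimes m_{(1,e)}n_{(1,e)}$, which is precisely $h\cdot(m\otimes n)_{(0,q)}\otimes(m\otimes n)_{(1,e)}$ by the definitions of the diagonal action and the codiagonal coaction; this is exactly (\ref{5a}) for $M\otimes N$.

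The only genuine obstacle is the Sweedler-index bookkeeping in the coassociativity step, where one must track in which coalgebra each comultiplication acts and apply (\ref{2a}) at the neutral component $e$ at the right moment. Everything else reduces to the component-wise axioms already available for $M$ and $N$ and to the fact that $H_e$ behaves like an ordinary bialgebra at the identity of $Q$, so I expect no conceptual difficulty beyond this routine verification.
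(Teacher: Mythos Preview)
Your proposal is correct and follows the same route as the paper. The only cosmetic difference is that for the quasimodule structure the paper re-derives the diagonal-action axioms by hand (essentially repeating the computation of Proposition~4.4), whereas you simply invoke that earlier proposition; conversely, the paper dismisses the right $H_e$-comodule verification with ``it is easy to check'' while you spell it out. The compatibility check (\ref{5a}) is handled identically.
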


\begin{proof}
Since
$$1\cdot (m\otimes n)=1\cdot m\otimes 1\cdot n=m\otimes n.$$

and
$$\begin{aligned}
&h_{(1,p)}\cdot (S_p(h_{(2,p)})\cdot (m\otimes n))\\
&=h_{(1,p)}\cdot (S_p(h_{(3,p)})\cdot m\otimes S_p(h_{(2,p)})\cdot m)\\
&=h_{(1,p)}\cdot (S_p(h_{(4,p)})\cdot m)\otimes h_{(2,p)}\cdot (S_p(h_{(3,p)}\cdot n)\\
&=h_{(1,p)}\cdot S_p(h_{(3,p)}\cdot m)\otimes \epsilon_p(h_{(2,p)})n\\
&=\epsilon_p(h)(m\otimes n),\\
\end{aligned}$$

similarly, we can get $S_p(h_{(1,p)})\cdot (h_{(2,p)}\cdot (m\otimes n))=\epsilon_p(h)(m\otimes n),$ so $\{M_q\otimes N_q\}_{q\in Q}$ is a $Q-$graded $H-$quasimodule.

It is easy to check $\{M_p\otimes N_p\}_{q\in Q}$ is a family of right $H_e-$comodules, now we verify the condition
(\ref{5a}):
$$\begin{aligned}
&\rho_{pq,e}(h\cdot (m\otimes n))=\rho_{pq,e}(h_{(1,p)}\cdot m\otimes h_{(2,p)}\cdot n)\\
&=(h_{(1,p)}\cdot m)_{(0,pq)}\otimes (h_{(2,p)}\cdot n)_{(0,pq)}\otimes (h_{(1,p)}\cdot m)_{(1,e)}(h_{(2,p)}\cdot n)_{(1,e)}\\
&\stackrel{(\ref{5a})}=h_{(1,p)}\cdot m_{(0,q)}\otimes h_{(2,p)}\cdot n_{(0,q)}\otimes m_{(1,e)}n_{(1,e)}\\
&=h\cdot (m\otimes n)_{(0,q)} \otimes (m\otimes n)_{(1,e)},\\
\end{aligned}$$

therefore $\{M_q\otimes N_q\}_{q\in Q}$ is a $Q-$graded Long $H-$dimodule.
\end{proof}

\begin{proposition}
Let $(M,\varphi,\rho)$ be a Long $H-$dimodule. Then, for any $q\in Q$, the map
$$R_q:M_q\otimes M_q\rightarrow M_q\otimes M_q,~R_q(m\otimes n)=n_{(1,e)}\cdot m\otimes n_{(0,q)},$$
is a solution of Long-equation for $M_q$.
\end{proposition}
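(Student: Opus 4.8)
The plan is to verify the Long equation $R_q^{12}R_q^{23}=R_q^{23}R_q^{12}$ directly on a generic tensor $\ell\otimes m\otimes n\in M_q\otimes M_q\otimes M_q$, evaluating both composites in Sweedler notation and checking that they agree. First I would record that $R_q$ is well defined as a map $M_q\otimes M_q\to M_q\otimes M_q$: since $n_{(1,e)}\in H_e$ and $m\in M_q$, the action $n_{(1,e)}\cdot m$ lands in $M_{eq}=M_q$ via $\varphi_{e,q}$, while $n_{(0,q)}\in M_q$, so both output factors lie in $M_q$.

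For the right-hand composite I apply $R_q^{12}=R_q\otimes I$ first and then $R_q^{23}=I\otimes R_q$ to the last two factors, obtaining
$$R_q^{12}(\ell\otimes m\otimes n)=m_{(1,e)}\cdot\ell\otimes m_{(0,q)}\otimes n,$$
$$R_q^{23}R_q^{12}(\ell\otimes m\otimes n)=m_{(1,e)}\cdot\ell\otimes n_{(1,e)}\cdot m_{(0,q)}\otimes n_{(0,q)}.$$
This side uses only the definition of $R_q$ applied twice and no structural identity. For the left-hand composite I apply $R_q^{23}$ first, giving $\ell\otimes n_{(1,e)}\cdot m\otimes n_{(0,q)}$, and then $R_q^{12}$, which must invoke the coaction on the second factor $n_{(1,e)}\cdot m$:
$$R_q^{12}R_q^{23}(\ell\otimes m\otimes n)=(n_{(1,e)}\cdot m)_{(1,e)}\cdot\ell\otimes(n_{(1,e)}\cdot m)_{(0,q)}\otimes n_{(0,q)}.$$

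The single substantive step, and the only place where care is needed, is rewriting the coaction on $n_{(1,e)}\cdot m$: applying the dimodule compatibility (\ref{5a}) with $h=n_{(1,e)}\in H_e$ (so that $p=e$ and $pq=q$) yields
$$(n_{(1,e)}\cdot m)_{(0,q)}\otimes(n_{(1,e)}\cdot m)_{(1,e)}=n_{(1,e)}\cdot m_{(0,q)}\otimes m_{(1,e)}.$$
Substituting this transforms the left-hand composite into $m_{(1,e)}\cdot\ell\otimes n_{(1,e)}\cdot m_{(0,q)}\otimes n_{(0,q)}$, which is exactly the right-hand composite computed above, establishing the Long equation for $R_q$.

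Everything apart from that one application of (\ref{5a}) is routine bookkeeping with the Sweedler notation; I expect the compatibility condition to be the main (indeed the only) obstacle, since it is what lets the $H_e$-action $n_{(1,e)}$ be transported past the coaction on $m$. Notably, neither coassociativity of the $H_e$-coaction nor any Hopf-quasigroup antipode axiom is required: both composites collapse to the same three-factor expression using only the definition of $R_q$ and (\ref{5a}).
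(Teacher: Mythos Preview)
Your proposal is correct and follows essentially the same approach as the paper: both evaluate $R_q^{12}R_q^{23}$ and $R_q^{23}R_q^{12}$ on a generic tensor and reduce them to the same expression using exactly one application of the dimodule compatibility (\ref{5a}). The only cosmetic difference is that the paper chains the computation in one direction (from $R_q^{12}R_q^{23}$ to $R_q^{23}R_q^{12}$), whereas you compute both composites separately and compare; your added remark on well-definedness of $R_q$ is a nice clarification the paper omits.
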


\begin{proof}
$$\begin{aligned}
&R_q^{12}R_q^{23}(l\otimes m\otimes n)\\
&=R^{12}_q(l\otimes n_{(1,e)}\cdot m\otimes n_{(0,q)})\\
&=(n_{(1,e)}\cdot m)_{(1,e)}l\otimes (n_{(1,e)}\cdot m)_{(0,q)}\otimes n_{(0,q)}\\
&\stackrel{(\ref{5a})}=m_{(1,e)}\cdot l\otimes n_{(1,e)}\cdot m_{(0,q)}\otimes n_{(0,q)}\\
&=R_{q}^{23}(m_{(1,e)}\cdot l\otimes m_{(0,q)}\otimes n)\\
&=R^{23}_qR^{12}_q(l\otimes m\otimes n),~\forall q\in Q,~l,m,n\in M_q.\\
\end{aligned}$$

we have $R_q^{12}R_q^{23}=R_{q}^{23}R_{q}^{12}~,\forall q\in Q.$

\end{proof}

By a $Q-$graded Long $H-$dimodule, we can construct a family of solution to Long-equation.

\section{The smash product for $Q$-graded Hopf quasigroups }
\def\theequation{6.\arabic{equation}}
\setcounter{equation} {0} \hskip\parindent
\begin{definition}
Let $H$ be a $Q-$graded Hopf quasigroup, a $H-$quasimodule is a pair $(M,\varphi),$ where $M$ is a $k-$space, $\varphi=\{\varphi_{p}:H_p\otimes M\rightarrow M\}_{p\in Q}$ is a family of $k-$maps such that,
$$1\cdot m=m,~\forall m\in M,$$
\begin{equation}
S_p(h_{(1,p)})\cdot( h_{(2,p)}\cdot m )=\epsilon_p(h)m=h_{(1,p)}\cdot (S_p(h_{(2,p)})\cdot m)~,\forall p\in Q,~h\in H_p,~m\in M. \label{6a}
\end{equation}
\end{definition}

\begin{definition}
Let $H$ be a $Q-$graded Hopf quasigroup, an (not necessarily associative) algebra $A$ is called a $H-$quasimodule algebra if $A$ is a $H-$quasimodule and
\begin{equation}
(h_{(1,p)}\cdot a )(h_{(2,p)}\cdot b)=h\cdot (ab),~\forall p\in Q,~h\in H_p,~a,b\in A, \label{6b}
\end{equation}
\begin{equation}
h\cdot 1= \epsilon_p(h)1,~\forall p\in Q,~h\in H_p. \label{6c}
\end{equation}

A coalgebra $C$ is a $H-$quasimodule coalgebra if $C$ is a $H-$quasimodule and
\begin{equation}
\Delta(h\cdot c)=h_{(1,p)}\cdot c_1\otimes h_{(2,p)}\cdot c_{2},~\forall p\in Q,~h\in H_p,~c\in C,\label{6d}
\end{equation}
\begin{equation}
~\epsilon(h\cdot c)=\epsilon_p(h)\epsilon(c),~\forall p\in Q,~h\in H_p,~c\in C.\label{6e}
\end{equation}

A Hopf quasigroup $A$ is called a $H-$quasimodule Hopf quasigroup if it is both a $H-$quasimodule algebra and a $H-$quasimodule coalgebra.
\end{definition}
\begin{lemma}
Let $H$ is a $Q-$graded Hopf quasigroup, $A$ is a $H-$quasimodule Hopf quasigroup, then
\begin{equation}
h\cdot S(a)=S(h\cdot a),~\forall p\in Q,~h\in H_p,~a\in A.\label{6f}
\end{equation}
\end{lemma}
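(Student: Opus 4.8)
The plan is to establish the identity by computing a single auxiliary element in two different ways, rather than by the classical route. In an ordinary Hopf algebra one would argue that both $a\mapsto h\cdot S(a)$ and $a\mapsto S(h\cdot a)$ are convolution inverses of $a\mapsto h\cdot a$ and invoke uniqueness of the inverse; but here convolution is \emph{not} associative, so one cannot pass from a one-sided inverse to a two-sided one, and that argument is unavailable. Instead I will use the module-algebra axioms (\ref{6b}), (\ref{6c}), the module-coalgebra axioms (\ref{6d}), (\ref{6e}), and the intrinsic antipode axioms of the Hopf quasigroup $A$, in particular $S(a_{1})a_{2}=\epsilon(a)1=a_{1}S(a_{2})$ and the inner relation $(g\,x_{1})S(x_{2})=g\,\epsilon(x)$.

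The device is the auxiliary element
\[
E:=\big((h_{(1,p)}\cdot S(a_{1}))(h_{(2,p)}\cdot a_{2})\big)\,S(h_{(3,p)}\cdot a_{3}),\qquad h\in H_{p},\ a\in A,
\]
built from the threefold coproducts $h_{(1,p)}\otimes h_{(2,p)}\otimes h_{(3,p)}$ and $a_{1}\otimes a_{2}\otimes a_{3}$, which are legitimate since $\Delta_{p}$ and $\Delta$ are coassociative. First I evaluate $E$ by collapsing the inner bracket: by the module-algebra axiom (\ref{6b}) (the two $H$-legs $h_{(1,p)},h_{(2,p)}$ being the coproduct of a single element) one gets $(h_{(1,p)}\cdot S(a_{1}))(h_{(2,p)}\cdot a_{2})=h_{(1,p)}\cdot(S(a_{1})a_{2})$, and then $S(a_{1})a_{2}=\epsilon(a)1$ together with (\ref{6c}) turns this into a scalar multiple of $1$. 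Recombining the surviving counit factors with the outer legs $h_{(3,p)}$ and $a_{3}$ produces exactly $S(h\cdot a)$.

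The second evaluation regroups the outer product. Using the module-coalgebra axiom (\ref{6d}), I recognise $h_{(2,p)}\cdot a_{2}\otimes h_{(3,p)}\cdot a_{3}$ as the coproduct $x_{1}\otimes x_{2}$ of the single element $x=h_{(2,p)}\cdot a_{2}$, so that $E$ has the shape $(y\,x_{1})S(x_{2})$ with $y=h_{(1,p)}\cdot S(a_{1})$. The antipode axiom $(g\,x_{1})S(x_{2})=g\,\epsilon(x)$ of $A$ then collapses $E$ to $\epsilon(x)\,y$, and recombining the counit factors gives $h\cdot S(a)$. Comparing the two evaluations of the same element $E$ yields $S(h\cdot a)=h\cdot S(a)$, as required.

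The main obstacle is precisely the non-associativity: the whole computation must be arranged so that every bracket that is collapsed is an exact instance of one of the Hopf-quasigroup antipode axioms (these being the non-associative substitutes for associativity), and so that the two coproducts, $\Delta_{p}$ on $h$ and $\Delta$ on $a$, are carried in lockstep through (\ref{6d}). In particular, placing the antipode on $a_{1}$ in the first factor of $E$ (rather than the symmetric arrangement $(h_{(1,p)}\cdot a_{1})(h_{(2,p)}\cdot S(a_{2}))$, for which the outer regrouping would not match an antipode axiom) is what makes both evaluations legitimate; tracking the threefold coproducts correctly is the only delicate bookkeeping.
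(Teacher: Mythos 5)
Your proof is correct, and it takes a genuinely different route from the paper's. The paper proceeds in three stages: it first shows that $(h_{(1,p)}\cdot a_1)(h_{(2,p)}\cdot S(a_2))$ and $(h_{(1,p)}\cdot a_1)S(h_{(2,p)}\cdot a_2)$ are both equal to $\epsilon_p(h)\epsilon(a)1$; it then equates them, refines the coproducts and left-multiplies by $S(h_{(1,p)}\cdot a_1)$ to obtain the auxiliary identity (\ref{6g}); and it finally collapses both sides of (\ref{6g}) by passing into and back out of the left antipode axiom $S(x_1)(x_2g)=\epsilon(x)g$ of $A$ --- four applications of antipode axioms in total, plus a substitution-and-cancellation step. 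You instead evaluate the single element $E=\big((h_{(1,p)}\cdot S(a_{1}))(h_{(2,p)}\cdot a_{2})\big)S(h_{(3,p)}\cdot a_{3})$ in two ways: collapsing the inner bracket via (\ref{6b}), $S(a_1)a_2=\epsilon(a)1$ and (\ref{6c}) gives $S(h\cdot a)$, while recognizing the last two tensor legs as $\Delta(h_{(2,p)}\cdot a_2)$ via (\ref{6d}) and applying the right antipode axiom $(gx_1)S(x_2)=g\epsilon(x)$ of $A$ gives $h\cdot S(a)$. The ingredients are identical (module-algebra and module-coalgebra axioms plus the Hopf-quasigroup antipode axioms of $A$), but your organization needs only two antipode-axiom applications and no substitution step, which in the non-associative setting is the most delicate part of the paper's argument; what the paper's longer route buys is that it never has to choose a clever bracketing in advance, since it manufactures the comparison identity (\ref{6g}) mechanically from the two scalar identities. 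Your observation about why the antipode must sit on $a_1$ is also accurate: with the symmetric arrangement $(h_{(1,p)}\cdot a_1)(h_{(2,p)}\cdot S(a_2))$ the legs $h_{(2,p)}\cdot S(a_2)\otimes h_{(3,p)}\cdot a_3$ do not form a coproduct, so (\ref{6d}) could not be invoked for the outer collapse. The only point to make explicit in a full write-up is the coassociative regrouping of the threefold coproducts (legs $1,2$ of $h$ and of $a$ in the first evaluation, legs $2,3$ in the second), since that is the one place where the bookkeeping could slip.
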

\begin{proof}
Since
$$(h_{(1,p)}\cdot a_1)(h_{(2,p)}\cdot S(a_2))\stackrel{(\ref{6a})}=h\cdot (a_1S(a_2))\stackrel{(\ref{6c})}=\epsilon_p(h)\epsilon(a)1,$$

and
$$(h_{(1,p)}\cdot a_1)S(h_{(2,p)}\cdot a_2)=(h\cdot a)_1S((h\cdot a)_2)=\epsilon(h\cdot a)1\stackrel{(\ref{6e})}=\epsilon_p(h)\epsilon(a)1,$$

we have
$$(h_{(1,p)}\cdot a_1)(h_{(2,p)}\cdot S(a_2))=(h_{(1,p)}\cdot a_1)S(h_{(2,p)}\cdot a_2),$$

repalcing $h$ with $h_{(2,p)}$ and $a$ with $a_2,$ and multiplying by $S(h_{(1,p)}\cdot a_1)$ on the left, we have
\begin{equation}
S(h_{(1,p)}\cdot a_1)((h_{(2,p)}\cdot a_2)S(h_{(3,p)}\cdot a_3))=S(h_{(1,p)}\cdot a_1)((h_{(2,p)}\cdot a_2)(h_{(3,p)}\cdot S(a_3))), \label{6g}
\end{equation}

then we get
$$\begin{aligned}
&S(h\cdot a)=\epsilon(h_{(1,p)}\cdot a_1)S(h_{(2,p)}\cdot a_2)\\
&\stackrel{(\ref{2c})}=S((h_{(1,p)}\cdot a_1)_1)((h_{(1,p)}\cdot a_1)_2S(h_{(2,p)}\cdot a_2))\\
&\stackrel{(\ref{2a})}=S(h_{(1,p)}\cdot a_1)((h_{(2,p)}\cdot a_2)S(h_{(3,p)}\cdot a_3))\\
&\stackrel{(\ref{6f})}=S(h_{(1,p)}\cdot a_1)((h_{(2,p)}\cdot a_2)(h_{(3,p)}\cdot S(a_3)))\\
&\stackrel{(\ref{2a})}=S((h_{(1,p)}\cdot a_1)_1)((h_{(1,p)}\cdot a_1)_2(h_{(2,p)}\cdot S(a_2)))\\
&\stackrel{(\ref{2c})}=\epsilon(h_{(1,p)}\cdot a_1)(h_{(2,p)}\cdot S(a_2))\\
&=\epsilon_p(h_{(1,p)})\epsilon(a_1)(h_{(2,p)}\cdot S(a_2))=h\cdot S(a).
\end{aligned}$$
\end{proof}

\begin{theorem}
 Suppose $H$ be a $Q-$graded Hopf quasigroup and $A$ is a $H-$quasimodule Hopf quasigroup such that
\begin{equation}
h_{(1,p)}\otimes h_{(2,p)}\cdot a=h_{(2,p)}\otimes h_{(1,p)}\cdot a,~\forall p\in Q,~h\in H_p,~a\in A, \label{6h}
\end{equation}
then the following statements are equivalent:

(1) There is a smash product $Q-$graded Hopf quasigroup $A\rtimes H$ built on $A\otimes H=\{A\otimes H_p\}_{p\in Q}$ with

(smash product)
\begin{equation}(a\rtimes h)(b\rtimes g)=a(h_{(1,p)}\cdot b)\otimes h_{(2,p)}g,~\forall p,q\in Q,~h\in H_p,~g\i n H_q,~a,b\in A. \label{6i}
\end{equation}

(unit)
$$1_{A\rtimes H}=1\otimes 1_,$$

(Coproduct)
\begin{equation}
 \Delta^*_p(a\otimes h)=(a_1\otimes h_{(1,p)})\otimes (a_2\otimes h_{(2,p)}),
\end{equation} \label{6j}

(counit)
\begin{equation}
\epsilon^*_p(a\otimes h)=\epsilon(a)\epsilon_p(h), \label{6k}
\end{equation}

(antipode)
\begin{equation}
S^*_p(a\otimes h)=S_p(h_{(2,p)}\cdot S(a))\otimes S_p(h_{(1,p)}), \label{6l}
\end{equation}

(2)
\begin{equation}
g\cdot(S_p(h)\cdot a)=(gS_p(h))\cdot a,~\forall p,q\in Q,~h\in H_p,~g\in H_q,~a\in A. \label{6m}
\end{equation}
\end{theorem}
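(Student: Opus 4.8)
The plan is to verify directly that the six pieces of structure listed in (1) satisfy the axioms of a $Q$-graded Hopf quasigroup on $A\otimes H=\{A\otimes H_p\}_{p\in Q}$, and to pin down exactly the point at which (\ref{6m}) is forced. First I would dispose of the routine parts. The unit $1\otimes 1$ works by the unitality of $A$ and of $H$; each $A\otimes H_p$ is the tensor-product coalgebra, hence coassociative and counital; and $m^{*}$ and $\mu^{*}$ are coalgebra maps because $A$ is an $H$-quasimodule coalgebra (\ref{6d}),(\ref{6e}) and the structure maps of $H$ are coalgebra maps (\ref{2a}),(\ref{2b}). None of these steps uses (\ref{6m}) or even (\ref{6h}); the whole content of the theorem sits in the antipode axioms (\ref{2c})--(\ref{2d}) for $S^{*}$.

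The key reduction is to cut those antipode axioms down to a single identity. Using the $H$-quasimodule algebra property (\ref{6b}) together with (\ref{2g}) I would first collapse the $A$-legs: a product of two actions $(S_p(h_{(2,p)})\cdot S(a_1))(S_p(h_{(3,p)})\cdot(a_2 x))$ becomes $S_p(\tilde h)\cdot(S(a_1)(a_2 x))$, after which $S(a_1)(a_2 x)=\epsilon(a)x$ by the antipode axiom of $A$; this reduces every instance of (\ref{2c})--(\ref{2d}) to the case $a=1$, i.e. to elements $1\otimes h$ and $b\otimes g$. Among the four resulting identities three are automatic, since their $H$-component involves two \emph{consecutive} Sweedler legs of $h$ that collapse by $H$'s own antipode axioms and leave precisely the quasimodule axiom (\ref{6a}) for $M=A$. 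The only genuinely new identity is
$$\bigl(S_p(h_{(2,p)})\cdot(h_{(3,p)}\cdot b)\bigr)\otimes S_p(h_{(1,p)})(h_{(4,p)}g)=\epsilon_p(h)\,b\otimes g,$$
in which the two legs $h_{(1,p)},h_{(4,p)}$ sitting in the $H$-component are separated by the leg $h_{(3,p)}$ that acts on $b$; this interleaving is what blocks a direct use of $H$'s antipode axiom.

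For $(2)\Rightarrow(1)$ I would prove the displayed identity by deinterleaving. In the smash product one Sweedler leg of $S_p(h_{(1,p)})$ acts on $h_{(3,p)}\cdot b$ while its other leg is free (multiplied by $h_{(4,p)}g$), which is exactly the configuration governed by (\ref{6h}); so (\ref{6h}) lets me commute that free leg past the acting one. Then (\ref{6m}) is used to absorb the intervening action into a single action by a product, bringing the two $H$-legs into consecutive position, whereupon $H$'s antipode axiom (\ref{2c}) collapses them and the quasimodule axiom (\ref{6a}) finishes the computation. For the converse $(1)\Rightarrow(2)$ I would run this backwards: assuming the displayed identity for all $g\in H_q$, I feed it the appropriate elements and peel off the $H$-component using $\epsilon$ together with the Galois/antipode relations (\ref{2c}),(\ref{2g}) of $H$, which leaves exactly $g\cdot(S_p(h)\cdot a)=(gS_p(h))\cdot a$. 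Alternatively both implications can be packaged through the Galois-map characterisation of Section~3: the left Galois map of $A\rtimes H$ has candidate inverse $X\otimes Y\mapsto X_{(1)}\otimes S^{*}(X_{(2)})Y$, and this is a two-sided inverse precisely when (\ref{6m}) holds.

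The main obstacle is the deinterleaving step. Neither (\ref{6h}) nor $H$'s antipode axiom applies to the displayed identity on its own, because the leg acting on $b$ sits between the two legs living in the $H$-component; it is the precise interplay of (\ref{6h}) (to commute a free leg past an acting one) and (\ref{6m}) (to fuse the intervening action into a product) that makes the collapse possible. Keeping the multi-fold Sweedler bookkeeping consistent through these moves, and checking that the converse extraction really returns (\ref{6m}) and nothing weaker, is the delicate part of the argument.
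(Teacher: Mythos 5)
Your reduction step contains a genuine gap, and it sits exactly where the content of the theorem lives. Collapsing the $A$-legs via (\ref{6b}), (\ref{2g}) and $A$'s antipode axiom $S(a_1)(a_2x)=\epsilon(a)x$ does work for the two \emph{left} antipode identities (\ref{2c}) of $A\rtimes H$, but it fails for the two \emph{right} identities (\ref{2d}). Expanding $((b\rtimes g)(a\rtimes h)_{(1,p)})S^*_p((a\rtimes h)_{(2,p)})$ and using (\ref{2d}) of $H$ on the adjacent legs in the $H$-component leads to
$$\bigl(b(g_{(1,q)}\cdot a_1)\bigr)\Bigl[(g_{(2,q)}h_{(1,p)})\cdot\bigl(S_p(h_{(2,p)})\cdot S(a_2)\bigr)\Bigr]\otimes g_{(3,q)},$$
in which the two $a$-legs are acted on by \emph{different} elements ($g_{(1,q)}$ versus the product $g_{(2,q)}h_{(1,p)}$ composed with an inner antipode action), so $A$'s antipode axiom cannot fire until the nested action is fused into a single action by $(g_{(2,q)}h_{(1,p)})S_p(h_{(2,p)})$ --- and that fusing \emph{is} (\ref{6m}); since the quasimodule action is not associative, nothing weaker will do. Note that setting $a=1$ makes these right identities trivially true by (\ref{6c}), so your ``reduction to the case $a=1$'' does not merely discard redundancy there: it discards all of the content.

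Worse, the single identity you isolate as ``the only genuinely new one'' is in fact a consequence of (\ref{6h}) alone, hence cannot be equivalent to (\ref{6m}). Indeed, applying (\ref{6h}) to $S_p(h)$ and rewriting with (\ref{2g}) gives $S_p(h_{(2,p)})\otimes S_p(h_{(1,p)})\cdot a=S_p(h_{(1,p)})\otimes S_p(h_{(2,p)})\cdot a$, which swaps legs $1,2$ of your display; a second, direct application of (\ref{6h}) swaps legs $2,3$; the display becomes $S_p(h_{(1,p)})\cdot(h_{(2,p)}\cdot b)\otimes S_p(h_{(3,p)})(h_{(4,p)}g)$, which collapses to $\epsilon_p(h)\,b\otimes g$ by (\ref{6a}) and (\ref{2c}) --- no use of (\ref{6m}) anywhere. (In fact (\ref{6m}) does not even apply to your display: it fuses an \emph{outer plain} action with an \emph{inner antipode} action, whereas your display has the opposite configuration $S_p(\cdot)\cdot(\,\cdot\,\cdot b)$.) Consequently your plan for $(1)\Rightarrow(2)$ --- extracting (\ref{6m}) from that identity --- cannot succeed: the identity holds under the standing hypothesis (\ref{6h}) regardless of whether (\ref{6m}) does. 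The paper instead reads (\ref{6m}) off the right antipode axiom (\ref{2d}) of $A\rtimes H$ evaluated on general $a\rtimes h$ and $b\rtimes g$, applying $I\otimes\epsilon_q$ and then specializing $b$; and in the converse direction it uses (\ref{6m}) precisely to fuse the nested actions occurring in the two right identities. One smaller slip: the coalgebra-map property of the smash multiplication is not free of (\ref{6h}) either; it is exactly (\ref{6h}) that lets the free and acting Sweedler legs of $h$ be interchanged so that $\Delta^*$ is multiplicative, which is why (\ref{6h}) is a standing hypothesis of the theorem rather than part of statement (2).
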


\begin{proof}
(1)$\Longrightarrow(2)$
$$\aligned
&\epsilon(a)\epsilon_p(h)b\otimes g\\
&\stackrel{(\ref{6i})}=(b\otimes g)\epsilon_p^{\star}(a\otimes h)\\
&\stackrel{(\ref{2d})}=((b\rtimes g)S_p^{\star}((a\rtimes h)_{(1,p)}))(a\rtimes h)_{(2,p)}\\
&\stackrel{(\ref{6j})}=((b\rtimes g)S_p^{\star}(a_1\rtimes h_{(1,p)}))(a_2\rtimes h_{(2,p)})\\
&\stackrel{(\ref{6l})}=((b\rtimes g)(S_p(h_{(2,p)}\cdot S(a_1))\rtimes S_p(h_{(1,p)})))(a_2\rtimes h_{(3,p)})\\
&\stackrel{(\ref{2g})}=((b\rtimes g)(S_p(h_{(1,p)})_{(1,p^{-1})}\cdot S(a_1)\rtimes S_p(h_{(1,p)})_{(2,p^{-1})}))(a_2\rtimes h_{(2,p)})\\
&\stackrel{(\ref{6i})}=(b(g_{(1,q)}\cdot (S_p(h_{(1,p)})_{(1,p^{-1})}\cdot S(a_1)))\rtimes g_{(2,q)}S_p(h_{(1,p)})_{(2,p^{-1})})(a_2\rtimes h_{(3,p)})\\
&\stackrel{(\ref{6i})}=(b(g_{(1,q)}\cdot(S(h_{(1,p)})_{(1,p^{-1})}\cdot S(a_1)))))((g_{(2,q)}S_p(h_{(1,p)})_{(2,p^{-1})})\cdot a_2)\otimes (g_{(3,q)}S(h_{(1,p)})_{(3,p^{-1})})h_{(2,p)}\\
&\stackrel{(\ref{6f})}=(bS(g_{(1,q)}\cdot (S_p(h_{(1,p)})_{(1,p^{-1})}\cdot a_1)))((g_{(2,q)}S_p(h_{(1,p)})_{(2,p^{-1})})\cdot a_2)\otimes (g_{(3,q)}S_p(h_{(1,p)})_{(3,p^{-1})})h_{(2,p)}.\\
\endaligned$$
Applying $I\otimes \epsilon_q$ to both sides of this equation, set $b=g_{(1,q)}\cdot (S_p(h)_{(1,p^{-1})}\cdot a_1)$ and replace $a, ~g,~h$ by $a_2,~g_{(2,q)},~h_{(2,q)},$ we obtain
$$\begin{aligned}
&g\cdot (S_p(h)\cdot a)\\
&=((g_{(1,q)}\cdot (S_p(h)_{(1,p^{-1})}\cdot a_1))S(g_{(2,q)}\cdot (S_p(h)_{(2,p^{-1})}\cdot a_2)))((g_{(3,q)}S_p(h)_{(3,p^{-1})})\cdot a_3)\\
&=((g_{(1,q)}\cdot (S_p(h)_{(1,p^{-1})}\cdot a_1)_1)S(g_{(2,q)}\cdot (S_p(h)_{(1,p^{-1})}\cdot a_1)_2))((g_{(3,q)}S_p(h)_{(3,p^{-1})})\cdot a_3)\\
&=((g_{(1,q)}\cdot (S_p(h)_{(1,p^{-1})}\cdot a_1))_1S((g_{(1,q)}\cdot (S_p(h)_{(1,p^{-1})}\cdot a_1))_2)((g_{(2,q)}S_p(h)_{(2,p^{-1})})\cdot a_2)\\
&=\epsilon^{\star}(g_{(1,q)}\cdot (S_p(h)_{(1,p^{-1})}\cdot a_1))((g_{(2,q)}S_p(h)_{(2,p^{-1})})\cdot a_2)\\
&=\epsilon_q(g_{(1,q)})\epsilon_{p^{-1}}(S_p(h)_{(1,p^{-1})})\epsilon(a_1)((g_{(2,q)}S_p(h)_{(2,p^{-1})})\cdot a_2)\\
&=(gS_p(h))\cdot a.\\
\end{aligned}$$

\end{proof}

(2)$\Longrightarrow (1)$

It is easy to check $A\rtimes H$ is a $Q-$graded algebra and $\forall p\in Q,~A\rtimes H_p$ is a coassocitive counitary coalgebra and $\mu$ is a coalgebra map,

Since
$$\begin{aligned}
&((a\rtimes h)(b\rtimes g))_{(1,pq)}\otimes ((a\rtimes h)(b\rtimes g))_{(2,pq)}=\Delta^*_{pq}((a\rtimes h)(b\rtimes g))\\
&\stackrel{(\ref{6i})}=\Delta^*_{pq}(a(h_{(1,p)}\cdot b)\otimes h_{(2,p)}g)\\
&\stackrel{(\ref{6j})}=(a(h_{(1,p)}\cdot b))_1\otimes (h_{(2,p)}g)_{(1,pq)}\otimes (a(h_{(1,p)}\cdot b))_2\otimes (h_{(2,p)}g)_{(2,pq)}\\
&=(a_1(h_{(1,p)}\cdot b)_1)\otimes (h_{(2,p)}g)_{(1,pq)}\otimes (a_2(h_{(1,p)}\cdot b)_2)\otimes (h_{(2,p)}g)_{(2,pq)}\\
&\stackrel{(\ref{6d})}=(a_1(h_{(1,p)}\cdot b_1))\otimes (h_{(3,p)}g)_{(1,pq)}\otimes (a_2(h_{(2,p)}\cdot b_2))\otimes (h_{(4,p)}g)_{(2,pq)}\\
&\stackrel{(\ref{2a})}=(a_1(h_{(1,p)}\cdot b_1))\otimes (h_{(3,p)}g_{(1,q)})\otimes (a_2(h_{(2,p)}\cdot b_2))\otimes (h_{(4,p)}g_{(2,q)})\\
&\stackrel{(\ref{6h})}=(a_1(h_{(1,p)}\cdot b_1))\otimes (h_{(2,p)}g_{(1,q)})\otimes (a_2(h_{(3,p)}\cdot b_2))\otimes (h_{(4,p)}g_{(2,q)})\\
&\stackrel{(\ref{6i})}=(a_1\rtimes h_{(1,p)})(b_1\rtimes g_{(1,q)})\rtimes (a_2\rtimes h_{(2,p)})(b_2\rtimes g_{(2,q)})\\
&\stackrel{(\ref{6j})}=(a\rtimes h)_{(1,p)}(b\rtimes g)_{(1,q)}\rtimes (a\rtimes h)_{(2,p)}(b\rtimes g)_{(2,q)},\\
\end{aligned}$$

and

$$\aligned
&\epsilon_{pq}^{\star}((a\rtimes h)(b\rtimes g))\\
&=\epsilon_{pq}^{\star}(a(h_{(1,p)}\cdot b)\otimes h_{(2,p)}g)\\
&=\epsilon(a(h_{(1,p)}\cdot b))\epsilon_{pq}(h_{(2,p)}g)\\
&=\epsilon(a)\epsilon_p(h_{(1,p)})\epsilon(b)\epsilon_p(h_{(2,p)})\epsilon_q(g)\\
&=\epsilon(a)\epsilon_p(b)\epsilon(b)\epsilon_q(g)\\
&=\epsilon^{\star}_p(a\otimes b)\epsilon^{\star}_q(b\otimes g),\\
\endaligned$$

the condition ($\ref{2a}$) is verified.

Since

$$\begin{aligned}&S^*_{p}((a\rtimes h)_{(1,
p)})((a\rtimes h)_{(2,p)}(b\rtimes g))\\
&=S^*_{p}(a_1\rtimes h_{(1,p)})((a_2\rtimes h_{(2,p)})(b\rtimes g))\\
&=(S_p(h_{(1,p)})_{(1,p^{-1})}\cdot S(a_1)\rtimes S_p(h_{(1,p)})_{(2,p^{-1})})(a_2(h_{(2,p)}\cdot b)\rtimes h_{(3,p)}g)\\
&=(S_p(h_{(1,p)})_{(1,p^{-1})}\cdot S(a_1))(S_p(h_{(1,p)})_{(2,p^{-1})}\cdot (a_2(h_{(2,p)}\cdot b)))\\
&\otimes S_p(h_{(1,p)})_{(3,p^{-1})}(h_{(3,p)}g)\\
&=S_p(h_{(1,p)})_{(1,p^{-1})}\cdot (S(a_1)(a_2(h_{(2,p)}\cdot b)))\otimes S_p(h_{(1,p)})_{(2,p^{-1})}(h_{(3,p)}g)\\
&=\epsilon(a)S_p(h_{(2,p)})\cdot (h_{(3,p)}\cdot b)\otimes (S_p(h_{(1,p)})(h_{(4,p)}g)=\epsilon(a)\epsilon_p(h)b\otimes g,\\
\end{aligned}$$

and
$$\begin{aligned}
&(a\rtimes h)_{(1,p)}(S^*_p((a\rtimes h)_{(2,p)})(b\rtimes g))\\
&=(a_1\rtimes h_{(1,p)})(S^*_p(a_2\rtimes h_{(2,p)})(b\rtimes g))\\
&=(a_1\rtimes h_{(1,p)})((S_p(h_{(3,p)})\cdot S(a_2)\rtimes S_p(h_{(2,p)}))(b\rtimes g))\\
&=a_1(h_{(1,p)}\cdot((S_p(h_{(5,p)})\cdot S(a))(S_p(h_{(4,p)})\cdot b)))\otimes h_{(2,p)}(S_p(h_{(3,p)})g)\\
&=a_1(h_{(1,p)}\cdot ((S_p(h_{(3,p)})\cdot S(a_2))(S_p(h_{(2,p)})\cdot b)))\otimes g\\
&=a_1(h_{(1,p)}\cdot (S_p(h_{(2,p)})\cdot (S(a_2)b)))\otimes g\\
&=\epsilon_p(h)a_1(S(a_2)b)\otimes g=\epsilon(a)\epsilon_p(h)b\otimes g,\\
\end{aligned}$$

the equation (\ref{2c}) is verified.

Similarly, since
$$\begin{aligned}
&((b\rtimes g)(a\rtimes h)_{(1,p)})S^*_p((a\rtimes h)_{(2,p)})\\
&=((b\rtimes g)(a_1\rtimes h_{(1,p)}))S^*_p(a_2\rtimes h_{(2,p)})\\
&=(b(g_{(1,q)}\cdot a_1)\rtimes g_{(2,q)}h_{(1,p)})(S_p(h_{(3,p)})\cdot S(a_2)\rtimes S_p(h_{(2,p)}))\\
&=(b(g_{(1,q)}\cdot a_1))((g_{(2,q)}h_{(1,p)})\cdot(S_p(h_{(4,p)}\cdot S(a_2)))\otimes (g_{(3,q)}h_{(2,p)})S_p(h_{(3,p)})\\
&=(b(g_{(1,q)}\cdot a_1))((g_{(2,q)}h_{(1,p)})\cdot (S_p(h_{(2,p)})\cdot S(a_2)))\otimes g_{(3,q)}\\
&\stackrel{(\ref{6m})}=(b(g_{(1,q)}\cdot a_1))(((g_{(2,q)}h_{(1,p)})S_p(h_{(2,p)}))\cdot S(a_2))\otimes g_{(3,q)}\\
&=\epsilon_p(h)(b(g_{(1,q)}\cdot a_1))(g_{(2,q)}\cdot S(a_2))\otimes g_{(3,q)}\\
&\stackrel{(\ref{6f})}=\epsilon_p(h)(b(g_{(1,q)}\cdot a_1))S(g_{(2,q)}\cdot a_2)\otimes g_{(3,q)}\\
&=\epsilon_p(h)(b(g_{(1,q)}\cdot a)_1)S((g_{(1,q)}\cdot a)_2)\otimes g_{(2,q)}\\
&=\epsilon(a)\epsilon_p(h)b\otimes g,\\
\end{aligned}$$

and
$$\begin{aligned}
&((b\rtimes g)S^*((a\rtimes h)_{(1,p)}))(a\rtimes h)_{(2,p)}\\
&=((b\rtimes g)S^*_p(a_1\rtimes h_{(1,p)}))(a_2\rtimes h_{(2,p)})\\
&=((b\rtimes g)(S_p(h_{(1,p)})_{(1,p^{-1})}\cdot S(a_1)\rtimes S_p(h_{(1,p)})_{(2,p^{-1})}))(a_2\rtimes h_{(2,p)})\\
&=(b(g_{(1,q)}\cdot (S_p(h_{(1,p)})_{(1,p^{-1})}\cdot S(a_1)))\rtimes g_{(2,q)}S_p(h_{(1,p)})_{(2,p^{-1})})(a_2\rtimes h_{(2,p)})\\
&=(b(g_{(1,q)}\cdot (S(h_{(1,p)})_{(1,p^{-1})}\cdot S(a_1)))((g_{(2,q)}S_p(h_{(1,p)})_{(2,p^{-1})})\cdot a_2)\otimes(g_{(3,q)}S_p(h_{(1,p)})_{(3,p^{-1})})h_{(2,p)}\\
&=(b((g_{(1,p)}S_p(h_{(1,p)})_{(1,p^{-1})})_1\cdot a)_1)(g_{(1,q)}S_p(h_{(1,p)})_{(1,p^{-1})})\cdot a)_2\otimes (g_{(2,q)}S_p(h_{(1,p)})_{(2,p^{-1})})h_{(2,p)})\\
&=\epsilon(a)b\otimes (gS_p(h_{(1,p)}))h_{(2,p)}=\epsilon(a)\epsilon(h)b\otimes g,\\
\end{aligned}$$

the equation (\ref{2d}) is verified, therefore $A\otimes H$ is a $Q-$graded Hopf quasigroup.

\end{document}